\begin{document}
%
\newcounter{theorem}

\newtheorem{theorem}[theorem]{Theorem}
\newtheorem{lemma}[theorem]{Lemma}
\newtheorem{thm}[theorem]{Theorem}
\newtheorem{prop}[theorem]{Proposition}
\newtheorem{defn}[theorem]{Definition}
\newtheorem{cor}[theorem]{Corollary}

\numberwithin{equation}{section}
\numberwithin{theorem}{section}
\counterwithin{figure}{section}

\newcommand{\beqn}{\begin{equation}}
\newcommand{\eeqn}{\end{equation}}
\def\R{{\mathbb R}}

\def\cleq{{\preccurlyeq}}
\def\cgeq{{\succcurlyeq}}
\def\la{{\langle}}
\def\ra{{\rangle}}
\def\pa{{\partial}}
\def\ep{\epsilon}
\def\nn{\nonumber}


\def\ut{{\tilde{u}}}
\def\vt{{\tilde{v}}}
\def\wt{{\tilde{w}}}
\def\ft{{\tilde{f}}}
\def\gt{{\tilde{g}}}

\def\wbv{{\breve{w}}}
\def\Wbv{{\breve{W}}}
\def\phibv{{\breve{\phi}}}
\def\gbv{{\breve{g}}}
\def\rhobv{{\breve{\rho}}}
\def\rhot{{\tilde{\rho}}}
\def\rhob{{\bar{\rho}}}

\def\Db{{\bar{D}}}
\def\Bb{{\bar{B}}}
\def\L {{\cal L}}

\def\F{{\cal F}}

\newcommand{\tre}{\textcolor{red}}
\newcommand{\tbl}{\textcolor{blue}}

\newcommand{\ol}[1]{\overline{#1}}


\title{The fixed angle scattering problem and wave equation inverse problems with two measurements}

\author{
Rakesh\thanks{
Department of Mathematical Sciences, University of Delaware, Newark, DE 19716, USA.
~Email: rakesh@udel.edu
}
\and
Mikko Salo\thanks{
Department of Mathematics and Statistics, University of Jyv{\"a}skyl{\"a}, Jyv{\"a}skyl{\"a}, Finland.
 Email: mikko.j.salo@jyu.fi.
}
 }

\date{March 18, 2019}

\maketitle

\begin{abstract}
We consider two formally determined inverse problems for the wave equation in more than one space dimension.
Motivated by the fixed angle inverse scattering problem, we show that a compactly supported potential is 
uniquely determined by the far field pattern generated by plane waves coming from exactly two opposite directions. This implies 
that a reflection symmetric potential is uniquely determined by its fixed angle scattering data. We also prove a 
Lipschitz stability estimate for an associated problem.
Motivated by the point source inverse problem in geophysics, we show that a compactly supported potential is uniquely 
determined from boundary measurements of the waves generated by exactly two sources - a point source and an 
incoming spherical wave. 
These results are proved using Carleman estimates and adapting the ideas introduced by Bukhgeim and Klibanov on the use of
Carleman estimates for inverse problems.
\end{abstract}

\tableofcontents


\section{Introduction}\label{sec:intro}

Coefficient determination problems for hyperbolic PDEs arise in areas such as geophysics and medical imaging. 
Formally determined problems, that is, problems where the parameter count for the unknown coefficient 
equals the parameter count of the measured data, present special theoretical and computational challenges, 
particularly for problems in more than one space dimension. In this article we discuss a number of longstanding open formally determined problems for hyperbolic PDEs. We obtain uniqueness and stability results for these inverse problems when we have 
data from two measurements or the coefficient is reflection symmetric.

Our results are for the perturbed wave equation with the unknown coefficient associated with zeroth order perturbation of the wave 
operator - we assume the velocity is a constant. This case is relevant in quantum mechanical applications (see \cite{RS19} for a more detailed discussion) and in cases where the sound speed is constant but the material density is variable and unknown. In many applications, the unknown coefficient of interest is associated with the 
non-constant velocity of propagation for the hyperbolic PDE and determining these coefficients is a more difficult problem.

\subsection{The plane wave scattering problems}

Let us first introduce some notation. Given $x \in \R^n$, $n>1$ we may write $x$ as $x=(y,z)$ with $y \in \R^{n-1}, z \in \R$. Further, $e:=e_n=(0,0,\cdots,0,1)$, $B$ will denote the open unit ball, $S$ its boundary, $\Box= \pa_t^2 - \Delta_x$ and
$\nu$ will denote the outward unit normal to the associated surface.

Here are two of the longstanding problems associated with far field patterns. 
Suppose $q(x)$ is a smooth function on $\R^n$, $n>1$, with compact support. Given a unit vector 
$\omega$ in $\R^n$, consider the  IVP with a plane wave source:
\begin{align}
\Box U + qU = 0, & \qquad (x,t) \in \R^n \times \R,
\label{eq:pUde}
\\
U(x,t) = \delta( t - x \cdot \omega), & \qquad x \in \R^n, ~ t < < 0.
\label{eq:pUic}
\end{align}
This was studied in \cite{RU14} and the following proposition is a 
consequence
of the arguments in the proof of Theorem 1 in \cite{RU14}.
%
\begin{prop}\label{prop:pwell}
The IVP (\ref{eq:pUde}), (\ref{eq:pUic}) has a unique distributional solution $U(x,t,\omega)$ given by
\[
U(x,t, \omega) = \delta( t - x \cdot \omega) + u(x,t,\omega) H(t-x \cdot \omega)
\]
where $u(x,t,\omega)$, a smooth function on the region $t \geq x \cdot \omega $, is the 
unique solution of the characteristic IVP
\begin{align}
\Box u + qu = 0, &\qquad (x,t) \in \R^n \times \R, ~~t > x \cdot \omega,
\label{eq:pude}
\\
u(x, x \cdot \omega, \omega) = - \frac{1}{2} \int_{-\infty}^0 q(x + \sigma \omega) \, d \sigma,
& \qquad x \in \R^n,
\label{eq:puchar}
\\
u(x,t, \omega) = 0, & \qquad x \in \R^n, ~x \cdot \omega < t < < 0.
\label{eq:puic}
\end{align}
Also, for any real $T$, on the region $\{ (x,t) : x \cdot \omega \leq t \leq T \}$, $|u(x,t,\omega)|$ is bounded above
by a continuous function of $\|q\|_{C^{n+4}}$.
Further, when $n=3$, given a unit vector $\theta \in \R^n$ and a real number $s$, we have (as distributions in $s$)
\[
\lim_{r \to \infty} r u(r \theta, r - s, \omega) 
= \frac{1}{2 \pi} \int_{x \cdot \theta=\tau} u_t(x,\tau -s,\omega) \, dS_x
\]
for any $ \tau >0$ for which the support of $q$ is in the region $ x \cdot \theta < \tau$.
\end{prop}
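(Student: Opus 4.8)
The plan is to represent the scattered wave by the three-dimensional retarded potential and read off its far-field asymptotics. Set $w := u\,H(t-x\cdot\omega)$, so $U=\delta(t-x\cdot\omega)+w$; since the plane wave $\delta(t-x\cdot\omega)$ satisfies $\Box\,\delta(t-x\cdot\omega)=0$, the equation $\Box U+qU=0$ becomes $\Box w=-qU$. The right-hand side is supported in $x$ within the support of $q$, and $w$ vanishes for $t\ll 0$, so by uniqueness for the forward problem $w$ equals the retarded potential
\[
w(x,t)=\frac{1}{4\pi}\int_{\R^3}\frac{(-qU)(\xi,\,t-|x-\xi|)}{|x-\xi|}\,d\xi .
\]
First I would set $x=r\theta$, $t=r-s$, multiply by $r$, and insert $|r\theta-\xi|=r-\theta\cdot\xi+O(1/r)$ and $r/|r\theta-\xi|\to1$. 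Passing to the limit as distributions in $s$ (pairing with a test function and dominating the integrand via the $C^{n+4}$ bound on $u$ from the first part of the proposition) should give
\[
\lim_{r\to\infty} r\,u(r\theta,r-s,\omega)=\frac{1}{4\pi}\int_{\R^3}(\Box w)(\xi,\,\theta\cdot\xi-s)\,d\xi .
\]

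The second step converts this volume integral into an integral over the plane $\{x\cdot\theta=\tau\}$. The key observation is that for a test function $\chi(s)$ the function $\Phi(\xi,t):=\chi(\theta\cdot\xi-t)$ is a plane wave, hence an exact solution of $\Box\Phi=0$. Pairing the previous limit with $\chi$ and substituting $t=\theta\cdot\xi-s$ rewrites it as $\tfrac{1}{4\pi}\int_{\R^4}\Phi\,\Box w\,d\xi\,dt$. Since $\Box w=-qU$ is supported in $\{\xi\cdot\theta<\tau\}$ for the admissible $\tau$, I would apply Green's identity for $\Box$ on the half-space $\Omega=\{\xi\cdot\theta<\tau\}\times\R_t$; because $\Box\Phi=0$, the term $\int_\Omega(\Phi\,\Box w-w\,\Box\Phi)$ collapses to a boundary integral over the plane $\{\xi\cdot\theta=\tau\}$, whose outward normal is $\theta$. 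After one integration by parts in $t$ to move the derivative off $\Phi$, this yields
\[
\frac{1}{4\pi}\int_{\xi\cdot\theta=\tau}\big(w_t-\theta\cdot\nabla w\big)(\xi,\,\tau-s)\,dS_\xi .
\]
Applying the same identity to the slab between two admissible values of $\tau$, where $\Box w=0$, shows this boundary integral is independent of $\tau$, as the statement requires.

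The final and, I expect, most delicate step is to simplify the boundary integrand on $\{\xi\cdot\theta=\tau\}$, where $q=0$ and $w=u\,H(t-\xi\cdot\omega)$ jumps across the characteristic front $t=\xi\cdot\omega$. With $u_0:=u(\xi,\xi\cdot\omega,\omega)$ one has $w_t=u_t H+u_0\,\delta(t-\xi\cdot\omega)$ and $\theta\cdot\nabla w=(\theta\cdot\nabla u)H-u_0(\theta\cdot\omega)\,\delta(t-\xi\cdot\omega)$, so the integrand splits into a regular part $(u_t-\theta\cdot\nabla u)H$ and a singular front part $u_0(1+\theta\cdot\omega)\delta(t-\xi\cdot\omega)$. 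The hard part will be to show, using the transport relation for $u$ along the front together with the free wave equation satisfied by $u$ on the plane, that these contributions reorganize so that the whole boundary integral equals $\tfrac{1}{2\pi}\int_{x\cdot\theta=\tau}u_t\,dS_x$ with the stated constant. The transverse decay of $u$ on the plane—the scattered field has not reached spatial infinity in the directions orthogonal to $\theta$ at any finite time—is what annihilates the lateral boundary terms and justifies the integrations by parts throughout.
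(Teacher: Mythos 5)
Your proposal addresses only the last assertion of the proposition (the far-field identity), and it does so while assuming the other two. You take as given the decomposition $U=\delta(t-x\cdot\omega)+u\,H(t-x\cdot\omega)$ with $u$ smooth, the uniqueness of the forward problem, and, explicitly, the $C^{n+4}$ bound on $u$ (``dominating the integrand via the $C^{n+4}$ bound on $u$ from the first part of the proposition''). This inverts the structure of the paper's own proof: there, the existence/uniqueness statements and the far-field formula are exactly the parts quoted from the proof of Theorem 1 in \cite{RU14}, and the only part actually proved is the sup bound on $|u|$. That argument (subsection \ref{subsec:pwell}) estimates the remainder $R_N$ of the progressing wave expansion: $(\Box+q)R_N\in C^{N-1}\subset L^2_{loc}(\R,H^{N-1}(\R^n))$, the energy estimates of Theorems 9.3.1--9.3.2 in \cite{Ho76} give $\|R_N\|_{H^1((-\infty,T],H^{N-1}(\R^n))}\cleq F(\|q\|_{C^{2N}})$, and Sobolev embedding with $N-1>n/2$, i.e.\ $N=2+\lfloor n/2\rfloor$, yields the bound in terms of $\|q\|_{C^{n+4}}$. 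Nothing in your proposal can substitute for this, and since your limiting argument invokes that very bound, the one part you do attempt rests on the part you omit.

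Within the far-field computation itself, the steps you carry out are sound (the retarded potential representation, $|r\theta-\xi|=r-\theta\cdot\xi+O(1/r)$, Green's identity against the plane wave $\chi(\theta\cdot\xi-t)$ on the half-space, and the $\tau$-independence); this is a legitimate route. But the step you defer as ``the hard part'' is precisely where the proof is incomplete, and the mechanism you propose is not the right one. The front term $u_0(1+\theta\cdot\omega)\,\delta(t-\xi\cdot\omega)$ is not annihilated by any transport relation along the front: $u_0(\xi)=-\tfrac12\int_{-\infty}^0 q(\xi+\sigma\omega)\,d\sigma$ is a ray integral of $q$, generically nonzero on the whole forward shadow tube of $\mathrm{supp}\,q$, and that tube meets every admissible plane $\{\xi\cdot\theta=\tau\}$. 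What actually closes the argument is a projection-slice/outgoing-wave reduction: the transform $\tilde w(\sigma,t):=\int_{\xi\cdot\theta=\sigma}w\,dS_\xi$ satisfies the one-dimensional wave equation $(\pa_t^2-\pa_\sigma^2)\tilde w=0$ for $\sigma$ beyond $\mathrm{supp}\,q$ (finite propagation speed justifies $\int\Delta w\,dS=\pa_\sigma^2\int w\,dS$), and the support conditions ($w=0$ for $t<<0$, and $\tilde w(\sigma,t)=0$ for $\sigma$ large at fixed $t$) force $\tilde w=f(t-\sigma)$; hence $\int(w_t-\theta\cdot\nabla w)\,dS=(\pa_t-\pa_\sigma)\tilde w=2\,\pa_t\tilde w$. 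Note that $\pa_t\tilde w$ is the \emph{distributional} $t$-derivative of $uH$, so it still contains the front layer; identifying the result with $\tfrac1{2\pi}\int_{x\cdot\theta=\tau}u_t\,dS_x$ therefore also requires settling whether $u_t$ in the statement means this distributional derivative or the classical one extended by zero --- they differ by exactly the layer your splitting exhibits. Your sketch supplies neither the one-dimensional reduction nor the resolution of the layer, so the chain of identities does not close.
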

%
%

We mention that $u(x,t,\omega)$ is also characterized as $u(x,t,\omega) = v|_{\{ t \geq x \cdot \omega \}}$ where $v$ solves 
the inhomogeneous PDE $\Box v + qv = -q \delta(t- x \cdot \omega)$ in $\R^n \times \R$ with $v|_{t < < 0} = 0$. However, 
since $v$ vanishes in $\{ t < x \cdot \omega \}$ by finite speed of propagation, only the 
behaviour in the set $\{ t \geq x \cdot \omega \}$ will play a role. For the proofs of the main results it will be natural to work in the 
region $\{ t \geq x \cdot \omega \}$, 
and hence the proposition is stated in this setting. The proof of the upper bound on $|u(x,t,\omega)|$ is not covered in the proof 
of Theorem 1 in \cite{RU14} and we postpone its 
proof to subsection \ref{subsec:pwell}. Also, the upper bound given is not optimal but adequate for our purposes.

Motivated by Proposition \ref{prop:pwell}, for $n=3$, we define the far field pattern of $u(x,t,\omega)$ in the direction $\theta$, with delay $s$, as
\[
\alpha(\theta, \omega,s) := \frac{1}{2 \pi} \int_{x \cdot \theta=\tau} u_t(x,\tau -s,\omega) \, dS_x, \qquad |\theta|=1, 
~ |\omega|=1, ~ s \in \R.
\]
This definition can be extended to all odd dimensions $n \geq 3$ \cite{MU08}. It is closely related to other definitions of far field patterns in 
scattering theory; please see \cite{RU14} for a discussion about this.

There are two longstanding open problems in scattering theory, {\bf the backscattering problem}, consisting 
of examining the injectivity, stability and inversion of the map
\[
q \to \alpha( -\omega, \omega, s)|_{|\omega|=1, ~ s \in \R} \qquad \text{(backscattering problem)}
\]
and the {\bf fixed angle scattering problem} (also called the {\bf single scattering problem}), consisting of examining, {\em for a fixed $\omega$}, 
the injectivity, stability and inversion of the map
\[
q \to \alpha( \theta, \omega, s)|_{|\theta|=1, ~ s \in \R} \qquad \text{(fixed angle scattering problem)}.
\]
These problems are often formulated in terms of the scattering amplitude $a_q(k,\omega,\theta)$, where $k > 0$ is a frequency, which appears in stationary scattering theory (relations between the time domain and stationary approaches are discussed in \cite{Uh01}). Both these problems remain open, including the injectivity of these maps, but there are partial results for both these problems.

For the backscattering problem, the map has been shown to be analytic, shown to be injective when $q$ is small 
enough in some norm or when $q$ is restricted to angularly controlled perturbations of a single $q_0$. Further, 
it has been shown that one can recover the principal singularities of $q$. We only mention here the works \cite{ER92, GU93, MU08, OPS01, RU14, RR12, St90}, and refer to the introduction of \cite{RU14} for further references and discussion. However, for the backscattering problem, one does not even 
know whether the backscattering data is enough to distinguish between zero and non-zero $q$, that is, whether 
$\alpha(-\omega, \omega,s)|_{|\omega|=1, s \in \R} =0$ implies $q=0$.

For the fixed angle scattering problem, uniqueness is known for potentials that are small or belong to a generic set, the principal 
singularities can be recovered, and the zero potential can be distinguished. See \cite{BLM89, St92, Ru01, BCLV18} and further 
results and 
references in \cite{Me18}.  Ramm, in \cite{Ra11}, claims to prove uniqueness for the fixed angle scattering problem for real valued 
smooth compactly supported potentials. However, it was pointed out to us (personal communication) that the 
proof in \cite{Ra11} is incorrect since Lemma 3.1 in \cite{Ra11} contradicts a consequence of the Paley-Wiener theorem.

For the fixed angle scattering problem for $n=3$, without loss of generality, we take $\omega=e$, so the fixed angle  
scattering problem
consists of examining the injectivity, stability, and inversion of the map
\[
q \to \alpha(\theta,e,s)|_{|\theta|=1, ~ s \in \R}.
\]
Since $q$ is compactly supported, we assume that $q$ is supported in $\Bb$. From Proposition
\ref{prop:pwell}, the single scattering problem is equivalent to the recovery of $q$ from the Radon transform (in $x$) of
$u_t(x, t, e)$ over the planes $x \cdot \theta = \tau$, $ \tau \geq 1$, for all $t \in \R$. Since $u_t(x,t,e)$ is compactly
supported for each fixed $t$, from Helgason's Support Theorem (see \cite{He11}), the problem is equivalent 
to recovering
$q$, given $u_t(x,t,e)$ for all $t \in \R$ and all $x$ such that $|x| \geq 1$. Now, from (\ref{eq:pude}) - (\ref{eq:puic}), that $q=0$ outside $B$ and the observation that the characteristic BVP
\begin{align*}
\Box u =0, & \qquad (x,t) \in  (\R^3 \setminus B) \times \R, ~ \tbl{t > x \cdot e},
\\
u = f, & \qquad \text{on} ~ S \times \R  \cap \{ t \geq x \cdot e \},
\\
u =0, & \qquad \tbl{x \cdot e < t} < < 0,
\end{align*}
is well posed, knowing $u(x,t,e)$ on $(S \times \R) \cap \{ t \geq x \cdot e \}$ is equivalent to knowing 
$u(x,t,e)$ on $((\R^3 \setminus B) \times \R) \cap \{ t \geq x \cdot e \}$. 
Hence the fixed angle scattering problem 
is equivalent to studying the injectivity, stability and inversion of the map
\beqn
q \to u(x,t,e)|_{(x,t) \in (S \times \R) \cap \{ t \geq x \cdot e \}}.
\label{eq:temppprob}
\eeqn
Of course Helgason's Support Theorem is an injectivity result and the associated stability estimates are weak 
so the equivalence stated above is only formal, as far as stability and inversion is concerned. For use below, we note 
that the map (\ref{eq:temppprob}) makes sense for any $n>1$.

A problem close to the fixed angle scattering problem is of interest in geophysics. Suppose $q(y,z)$ is a smooth function
on $\R^n$ with support in $z \geq 0$. If $U(x,t,e)$ is the solution of 
the IVP (\ref{eq:pUde}), (\ref{eq:pUic}) with $\omega=e$, then geophysicists make measurements only on
$z=0$ and they are interested in the inversion of the map
\beqn
q \to U(y, z=0,t,e)|_{(y,t) \in \R^{n-1} \times \R}.
\label{eq:geo}
\eeqn
This problem is still open but there are partial results for this problem. When $q$ depends only on $z$, the problem 
is a well understood one dimensional inverse problem for a hyperbolic PDE thanks to the 
work of Gelfand, Levitan, Krein and others - see \cite{Sy86} for a survey of the results. For the 
multidimensional problem, in \cite{SS85}, Sacks and Symes showed that this map is differentiable and its derivative is injective at points $q(\cdot)$ where $q$ depends only on $z$. In \cite{Ro02}, Romanov showed this map is 
invertible and constructed its inverse if the domain of this map was restricted to $q(y,z)$ which are analytic in $y$.

If $q$ is compactly supported, say $q$ is supported in $\Bb$, in addition to being supported in $z \geq 0$, then 
the study of the map (\ref{eq:geo}) is closely related to the study of the map (\ref{eq:temppprob}).
Since $q=0$ on $z \leq 0$, from (\ref{eq:pude}) - (\ref{eq:puic}) and the well-posedness of the characteristic
BVP
\begin{align*}
\Box u =0, & \qquad y \in \R^{n-1}, ~ z < 0, ~ t > z,
\\
u(y,z=0,t) = f, & \qquad (y,t) \in \R^{n-1} \times [0,\infty),
\\
u(y,z,z) =0, & \qquad z \leq 0, ~ y \in \R^2,
\end{align*}
we conclude that knowing $u(y,z=0,t)$ for all $(y,t) \in \R^{n-1} \times [0,\infty)$ 
we can determine $u_z(y,z=0,t)$ for all $(y,t) \in \R^{n-1} \times [0,\infty)$ - actually one can write
an explicit formula using the fundamental solution of the wave operator. 
Finally, since $q=0$ outside $B$,
from (\ref{eq:pUde}) - (\ref{eq:pUic}) and Holmgren's theorem on unique continuation for
\begin{align*}
\Box U =0, & \qquad (y,z,t) \in \R^n \times \R, ~ z > 0, ~ |(y,z)| > 1,
\\
U(y,z=0,t) = f, ~ U_z(y,z=0,t) = g, & \qquad (y,t) \in \R^{n-1} \times \R,
\end{align*}
we conclude that knowing $U(y,0,t), U_z(y,0,t)$ for $(y,t) \in \R^{n-1} \times \R$, uniquely determines
$U(y,z,t)$ on $S \times \R$. Hence, this geophysics problem is equivalent to the study of the 
map (\ref{eq:temppprob}).

The injectivity and stability of the fixed angle scattering map (\ref{eq:temppprob}) remains open but we show stability if we have data from two experiments; we show that the map
\beqn
q \to [u(x,t,e)|_{\Sigma_l}, u(x,t,-e)|_{\Sigma_r} ]
\label{eq:pmap}
\eeqn
is injective and its inverse is Lipschitz stable in certain norms; here
\[
\Sigma_l = \{ (y,z,t) \in S \times (-\infty,T] : t \geq z \}, \qquad \Sigma_r = \{ (y,z,t) \in S \times (-\infty,T] : t \geq -z \}.
\]
%

\begin{theorem}[Two plane wave data]\label{thm:twop1}
Suppose $q_i$, $i=1,2$ are smooth functions on $\R^n$ with support in 
$\Bb$ and $u_i(x,t,e)$, $u_i(x,t,-e)$ the solutions of (\ref{eq:pude}) -(\ref{eq:puic})) with $q=q_i$ and
$\omega=e, -e$. If $T>6$ and $\|q_i\|_{C^{n+4}}  \leq M$, $i=1,2$, then
\[
\|q_1 -q_2\|_{L^2(B)}
\cleq
\| (u_1-u_2)(\cdot, \cdot,e)\|_{H^1(\Sigma_l)} + 
\| (u_1-u_2)(\cdot, \cdot,-e)\|_{H^1(\Sigma_r)}
+ \| (u_1-u_2)(\cdot,\cdot,e) \|_{H^1(\Sigma_l \cap \{t=z\})}
\]
with the implied constant determined by $T$ and $M$. 
\end{theorem}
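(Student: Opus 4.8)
The plan is to adapt the Bukhgeim--Klibanov method, reducing the stability estimate to a Carleman estimate for $\Box$ with a weight tailored to the two characteristic cones. First I would pass to the difference. Setting $v := u_1 - u_2$ and $q := q_1 - q_2$, subtracting the two copies of (\ref{eq:pude}) gives
\[
\Box v + q_1 v = -q\, u_2 \qquad \text{in } \{t > x\cdot\omega\},
\]
for each of $\omega = e,-e$, with $v = 0$ for $t \ll 0$ and the characteristic trace $v(x,x\cdot\omega,\omega) = -\frac{1}{2}\int_{-\infty}^0 q(x+\sigma\omega)\,d\sigma$ inherited from (\ref{eq:puchar}). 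The right-hand side is linear in the unknown $q$ times a factor $u_2$ that is bounded in terms of $M$ by Proposition \ref{prop:pwell}; this is the structure the method needs. The key observation is that the characteristic trace encodes the half-line integral of $q$, so differentiating it along the surface $t=x\cdot\omega$ recovers $q$ itself: for $\omega=e$ one gets $q(y,z) = -2\,(\pa_z+\pa_t)v|_{t=z}$, where $\pa_z+\pa_t$ is tangential to the null surface $t=z$, and symmetrically for $\omega=-e$.

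Since $q$ is $t$-independent, the next step is the Bukhgeim--Klibanov differentiation: put $w := v_t$, which solves $\Box w + q_1 w = -q\,(u_2)_t$ with the same vanishing condition. I would then apply a Carleman estimate
\[
s \int_\Omega e^{2s\phi}\big( |\pa w|^2 + s^2 |w|^2 \big) \cleq \int_\Omega e^{2s\phi} |\Box w|^2 + (\text{boundary terms on } \pa\Omega)
\]
on the truncated region $\Omega = \{ |x|<1,\ x\cdot\omega < t < T\}$, with $\phi$ a pseudoconvex weight for $\Box$ chosen so that, for each fixed $x$, the map $t\mapsto \phi(x,t)$ is maximized on $t=x\cdot\omega$. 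Writing $\Box w = -q_1 w - q(u_2)_t$, the lower-order term $q_1 w$ is absorbed on the left for $s$ large. For the genuinely unknown term I would exploit that $q$ does not depend on $t$ and that $\phi$ peaks on $t=x\cdot\omega$: carrying out the $t$-integration first collapses the volume integral onto the characteristic surface,
\[
\int_\Omega e^{2s\phi}|q\,(u_2)_t|^2 \cleq \frac{1}{s}\int_{\{t = x\cdot\omega\}} e^{2s\phi}\, |q|^2 \, dS,
\]
and by the characteristic identity $q \sim (\pa_z+\pa_t)v$ this reduces to a trace of $\pa v$ on the null surface, which the Carleman estimate (applied to $v$ and $w$) controls.

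The two measurements enter here. The characteristic trace of $v$ supplies only the half-line integral of $q$ in the single direction $\omega$, and $\phi$ can be made simultaneously pseudoconvex and peaked on $t=x\cdot\omega$ only over part of $B$; running the estimate for $\omega=e$ and $\omega=-e$ and adding yields control of $q$ throughout $B$, with $T>6$ guaranteeing that $\Omega$ is tall enough for the geometric conditions to hold up to $|x|=1$. The boundary contributions split across the faces of $\pa\Omega$: on the lateral boundary $S\times\R$ they are bounded by $\|v\|_{H^1(\Sigma_l)}$ (resp. $\Sigma_r$), the normal derivative $\pa_\nu v$ being recovered from the Dirichlet trace through the source-free exterior problem $\Box v=0$ on $\{|x|>1\}$ exactly as in the well-posedness discussion preceding the theorem; the characteristic rim $S\cap\{t=z\}$ produces precisely the third data term $\|v\|_{H^1(\Sigma_l\cap\{t=z\})}$; and the top face $t=T$ is negligible since $\phi$ is smallest there.

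I expect the main obstacle to be the design and verification of the Carleman weight. It must at once be pseudoconvex for the wave operator, attain its $t$-maximum on the characteristic (null) surface $t=x\cdot\omega$ so that the $t$-integration concentrates there, and yield boundary terms of controllable sign on every face of $\Omega$ --- including the characteristic faces, which are null and hence delicate because their conormal direction is itself characteristic. Reconciling these requirements for both cone geometries, while absorbing the source and lower-order terms uniformly in $s$ and keeping all constants dependent only on $T$ and $M$ (through the $C^{n+4}$ bound and the pointwise estimate on $u_2$ from Proposition \ref{prop:pwell}), is the technical heart of the argument.
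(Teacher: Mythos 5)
Your ingredients at the edges are the right ones (passing to $v=u_1-u_2$, the identity $q_1-q_2 = -2(\pa_t+\pa_z)v|_{t=z}$, recovering $\pa_\nu v$ from Dirichlet data via the exterior problem, and collapsing the weighted volume integral onto the surface where the weight peaks), but the core of your plan has a genuine gap. You propose to run the Carleman estimate separately on the one-sided regions $\Omega=\{|x|<1,\ x\cdot\omega<t<T\}$, whose boundary \emph{contains} the characteristic surface $t=x\cdot\omega$ --- exactly the surface where your weight is maximal. A Carleman estimate on such a region necessarily produces boundary terms on that face, carrying the maximal weight $e^{2s\phi}$ and an extra factor of $s$, and those terms consist of the unknown trace of $v$ (equivalently of $q$): they are not data, and they do not come with a favorable sign, because the characteristic flux through $t=x\cdot\omega$ enters energy identities as the ``initial'' side ($\Omega$ is its future). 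The circularity is visible by comparing with the paper's Lemma \ref{lemma:neartez}, which bounds the trace by the interior, $\|\alpha\|^2_{1,\sigma,\Gamma}\cleq\sigma\|\alpha\|^2_{1,\sigma,Q_+}+\dots$, while a one-sided Carleman estimate on $Q_+$ can only give $\sigma\|\alpha\|^2_{1,\sigma,Q_+}\cleq\dots+\sigma(\text{trace terms on }\Gamma)$; chaining them returns the trace bounded by itself with a growing constant. Your proposed fix --- that each direction works ``over part of $B$'' and the two estimates can be added --- does not address this and is also not the actual role of the two measurements: differentiating the single half-line integral $\int_{-\infty}^z q(y,s)\,ds$ along $t=z$ already yields $q$ at \emph{every} point of $B$, and the paper's weight $\phi=5(a-z)^2+5|y|^2-(t-z)^2$ is pseudoconvex on all of $Q$; the obstruction is the characteristic boundary, not spatial coverage.

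What the two opposite directions actually buy --- and the idea missing from your proposal --- is the removal of that boundary altogether. Since $\Box$ is invariant under $t\mapsto -t$, the time-reversed second measurement $-(u_1-u_2)(x,-t,-e)$ solves the same equation $(\Box+q_1)(\cdot)=-(q_1-q_2)\ut_2$ on the complementary region $\{t<z\}$, with characteristic trace given by the opposite half-line integral $\frac12\int_z^\infty(q_1-q_2)$. The paper glues it to $(u_1-u_2)(x,t,e)$ across $t=z$, after subtracting a correction $\rho$ whose trace is the full-line integral $\frac12\int_\R(q_1-q_2)(y,s)\,ds$, so that the glued function $\alpha$ is $H^1$ across $t=z$ and the delta terms in $(\Box+q_1)\alpha$ cancel precisely because $\frac{d}{dz}\int_\R(q_1-q_2)(y,s)\,ds=0$. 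Now $\{t=z\}$ is an \emph{interior} surface of the full cylinder $Q=\Bb\times[-T,T]$: Tataru's estimate on $Q$ has boundary terms only on the lateral boundary (data) and near $t=\pm T$ (killed by a cutoff plus an energy estimate, the weight being small there), and the trace on $\{t=z\}$ is recovered from the interior by the one-sided energy estimate, with no circularity. The correction $\rho$ is itself unknown, but its trace on the rim $S\cap\{t=z\}$ equals $-\alpha$ there; this is where the third data term $\|(u_1-u_2)(\cdot,\cdot,e)\|_{H^1(\Sigma_l\cap\{t=z\})}$ in the theorem comes from --- not, as you suggest, from a Carleman boundary contribution on the rim. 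Without this gluing (or some substitute that genuinely controls the characteristic-boundary terms), your two one-sided estimates cannot be combined to close the argument.
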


A corollary of Theorem \ref{thm:twop1} is a result for single measurement data provided $q$ is compactly supported
and an even function in $z$, or more generally, for a fixed incoming direction $\omega$, $q$ is symmetric about the plane
$x \cdot \omega =c$ for some $c$. If $q(y,z)$ is an even function of $z$ and $u(x,t,e)$
$u(x,t,-e)$ are the solutions of (\ref{eq:pude}) - (\ref{eq:puic}) for $\omega=e$ and $\omega=-e$ then one observes that
\[
u(y,-z,t,e) = u(y,z,t,-e), \qquad \forall (y,z,t) \in \R^n \times \R, ~ t \geq -z
\]
hence knowing $u(\cdot,\cdot,e)|_{\Sigma_l}$ is equivalent to knowing $u(\cdot,\cdot,-e)|_{\Sigma_r}$, so Theorem
\ref{thm:twop1} implies the following corollary.
\begin{cor}[Fixed angle scattering for symmetric potentials]\label{cor:even}
Suppose $q_i$, $i=1,2$ are smooth functions on $\R^n$ with support in 
$\Bb$ and $u(x,t,e)$ the solution of (\ref{eq:pude}) -(\ref{eq:puic}) with $q=q_i$ and
$\omega=e$. If $q_i(y,z) = q_i(y,-z)$ for all $(y,z) \in \R^n$, $i=1,2$, if $T>6$, and if 
$\| q_i \|_{C^{n+4}} \leq M$,  $i=1,2$, then
\[
\|q_1 -q_2\|_{L^2(B)}
\cleq
\| (u_1-u_2)(\cdot, \cdot,e)\|_{H^1(\Sigma_l)} 
+ \| (u_1-u_2)(\cdot,\cdot,e) \|_{H^1(\Sigma_l \cap \{t=z\})}
\]
with the implied constant determined by $T$ and $M$. 
\end{cor}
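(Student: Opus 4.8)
The plan is to reduce the corollary to Theorem \ref{thm:twop1} by exploiting the reflection symmetry of the potentials to identify the two plane-wave measurements. The only real content is to justify the identity $u(y,-z,t,e) = u(y,z,t,-e)$ stated above, after which the middle data term in Theorem \ref{thm:twop1} becomes a copy of the first term and can be absorbed into the implied constant.

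First I would prove the symmetry relation using the uniqueness part of Proposition \ref{prop:pwell}. Fix an even potential $q$ (even in $z$) and set $\tilde u(y,z,t) := u(y,-z,t,e)$, defined on $\{t \geq -z\}$. Since $\Box = \pa_t^2 - \Delta_x$ is invariant under the reflection $z \mapsto -z$ and $q(y,-z) = q(y,z)$, the function $\tilde u$ solves $\Box \tilde u + q \tilde u = 0$ in $\{t > -z\}$, which is precisely the PDE (\ref{eq:pude}) for $\omega = -e$. On the characteristic surface $t = -z$ (that is, $t = x\cdot(-e)$ at the point $(y,z)$), the data (\ref{eq:puchar}) for $\tilde u$ is
\[
\tilde u(y,z,-z) = u(y,-z,-z,e) = -\tfrac12 \int_{-\infty}^0 q(y,-z+\sigma)\,d\sigma = -\tfrac12 \int_{-\infty}^0 q(y,z-\sigma)\,d\sigma,
\]
where the last step uses evenness of $q$ in $z$; this is exactly the characteristic value (\ref{eq:puchar}) of $u(\cdot,\cdot,-e)$. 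The vanishing condition (\ref{eq:puic}) for $\tilde u$ in $\{-z < t \ll 0\}$ likewise matches that for $\omega = -e$. Hence $\tilde u$ and $u(\cdot,\cdot,-e)$ solve the same characteristic IVP, so by uniqueness in Proposition \ref{prop:pwell} they coincide, giving $u(y,-z,t,e) = u(y,z,t,-e)$ on $\{t \geq -z\}$.

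Next I would transfer this identity to the trace norms. The reflection $R:(y,z,t)\mapsto(y,-z,t)$ is an isometry of $S \times \R$ (with the product metric) that preserves $S$ and maps $\Sigma_r$ onto $\Sigma_l$, since $(y,z,t)\in\Sigma_r \iff t\geq -z \iff (y,-z,t)\in\Sigma_l$. Applying the symmetry relation to $q_1$ and $q_2$ separately and subtracting gives $(u_1-u_2)(y,z,t,-e) = (u_1-u_2)(y,-z,t,e) = \big[(u_1-u_2)(\cdot,\cdot,e)\big]\circ R$, and since $R$ is a metric isometry (so all first-order surface derivatives are preserved in magnitude) the pullback yields
\[
\| (u_1-u_2)(\cdot,\cdot,-e)\|_{H^1(\Sigma_r)} = \| (u_1-u_2)(\cdot,\cdot,e)\|_{H^1(\Sigma_l)}.
\]
Both $q_1,q_2$ are smooth, supported in $\Bb$, and bounded by $M$ in $C^{n+4}$, so Theorem \ref{thm:twop1} applies verbatim; substituting the displayed equality collapses the middle term into the first, and the resulting factor $2\,\|(u_1-u_2)(\cdot,\cdot,e)\|_{H^1(\Sigma_l)}$ is absorbed into the implied constant, which yields the stated estimate.

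The only step requiring care — though it is not deep — is the matching of the characteristic data, where evenness of $q$ is used to convert $\int_{-\infty}^0 q(y,-z+\sigma)\,d\sigma$ into $\int_{-\infty}^0 q(y,z-\sigma)\,d\sigma$; everything else is the invariance of $\Box$ under reflection and the uniqueness already furnished by Proposition \ref{prop:pwell}.
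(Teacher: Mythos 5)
Your proposal is correct and takes essentially the same route as the paper: the paper's proof of Corollary \ref{cor:even} consists precisely of observing the reflection identity $u(y,-z,t,e)=u(y,z,t,-e)$ for potentials even in $z$ and substituting it into Theorem \ref{thm:twop1}, which is exactly what you do. The only difference is that you fill in the details the paper leaves implicit, namely deriving the identity from the uniqueness of the characteristic IVP in Proposition \ref{prop:pwell} and noting that the reflection $(y,z,t)\mapsto(y,-z,t)$ is an isometry carrying $\Sigma_r$ onto $\Sigma_l$, so the $H^1$ trace norms agree.
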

Recently, in \cite{RS19}, we have improved upon the result in Corollary \ref{cor:even} and proved stability for the fixed 
angle scattering problem under even, odd or $y$-controlled perturbations.


\subsection{The point source and spherical wave source problems  }

 Consider the following IVP problem associated to a point source
\begin{align}
\Box V + q V = \delta(x,t), & \qquad (x,t) \in \R^3 \times \R,
\label{eq:sVde}
\\
V =0, & \qquad t <0.
\label{eq:sVic}
\end{align}
This problem has been studied in \cite{Ro74} and elsewhere and the following is a consequence of the results in \cite{Bl17}.
\begin{prop}\label{prop:sVwell}
If $q(x)$ is a compactly supported smooth function on $\R^3$ which is zero in a neighborhood of the origin then 
(\ref{eq:sVde}), (\ref{eq:sVic}) has a unique distributional solution given by 
\[
V(x,t) = \frac{1}{4 \pi} \frac{\delta(t-|x|)}{|x|} + v(x,t)H(t-|x|)
\]
where $v(x,t)$ is a smooth function on $t \geq |x|$ (see Figure \ref{fig:vwdef})  and is the 
unique solution of the Goursat problem
\begin{align}
\Box v +qv =0, & \qquad t > |x|,
\label{eq:svde}
\\
v(x,|x|) =  -\frac{1}{8 \pi} \int_0^1 q(sx) \, ds, & \qquad x \in \R^3. 
\label{eq:svcc}
\end{align}
\end{prop}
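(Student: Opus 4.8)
The plan is to reduce the proposition to two ingredients: a singularity-matching computation showing that the stated ansatz solves the IVP precisely when $v$ solves the displayed Goursat problem, and the well-posedness (existence, uniqueness, smoothness) of that Goursat problem. This mirrors the plane wave case of Proposition \ref{prop:pwell}, with the characteristic plane $\{t = x\cdot\omega\}$ replaced by the forward light cone $\{t=|x|\}$.

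First I would recall that $E(x,t) := \frac{1}{4\pi}\frac{\delta(t-|x|)}{|x|}$ is the forward fundamental solution, $\Box E = \delta(x,t)$ with $\mathrm{supp}\,E \subset \{t\geq |x|\}$, so that $V=E$ solves the IVP exactly when $q=0$. For general $q$ I would substitute $V = E + vH(\phi)$ with $\phi := t-|x|$, expand $\Box(vH(\phi))$ by the Leibniz rule, and collect terms according to the order of singularity of $\delta(\phi)$. Since $\phi$ satisfies the eikonal equation $\phi_t^2 = |\nabla_x\phi|^2 = 1$, the most singular $\delta'(\phi)$ contributions cancel automatically. The $H(\phi)$ (regular) part yields $\Box v + qv = 0$ in $\{t>|x|\}$. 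The remaining $\delta(\phi)$ part equates $2(v_t + \partial_r v + \tfrac{v}{|x|})$ with $-\tfrac{q}{4\pi|x|}$ on the cone (using $\Delta|x| = 2/|x|$); restricting to the cone, $v_t + \partial_r v = \frac{d}{dr}v(r\omega,r)$, so this becomes the transport ODE $\frac{d}{dr}\big(r\,v(r\omega,r)\big) = -\frac{q(r\omega)}{8\pi}$. Integrating from the vertex, where $q\equiv 0$ forces the data to vanish, reproduces exactly $v(x,|x|) = -\frac{1}{8\pi}\int_0^1 q(sx)\,ds$.

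It then remains to solve the Goursat problem $\Box v + qv = 0$ in $\{t>|x|\}$ with the above characteristic data. I would establish existence, uniqueness, and smoothness by converting to the Volterra integral equation $V = E - E*(qV)$ obtained from the forward fundamental solution, and running a Picard iteration; causality (finite speed of propagation) makes the convolution well-defined and the iteration convergent on every compact time slab, while smoothness of $q$ propagates to $v$ by the usual bootstrap. The same integral equation gives uniqueness of the distributional solution: any difference $W$ of two solutions vanishing for $t<0$ satisfies $W = -E*(qW)$, whose only causal solution is $0$.

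I expect the main obstacle to be the well-posedness and regularity of the Goursat problem at the vertex of the cone. Unlike the planar case, the transport equation carries the singular zeroth-order coefficient $v/|x|$ coming from $\Delta|x| = 2/|x|$, and the characteristic surface degenerates at $x=0$. The hypothesis that $q$ vanishes in a neighborhood of the origin is exactly what resolves this: it forces $v \equiv 0$ near the vertex, both on the cone (the data integral vanishes there) and in the interior (by finite speed of propagation), so no singular interaction with the point source at the origin occurs and $v$ is genuinely smooth up to $\{t=|x|\}$. Carrying out the convergence and smoothness estimates uniformly up to the cone, which is the content cited from \cite{Bl17}, is the technical heart of the argument.
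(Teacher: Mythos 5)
Your proposal is correct and is essentially the argument the paper intends: the paper gives no proof of Proposition \ref{prop:sVwell} at all, deferring the well-posedness of the Goursat problem to \cite{Bl17}, and your singularity-matching computation (cancellation of the $\delta'(t-|x|)$ terms via the eikonal equation, the transport ODE $\frac{d}{dr}\bigl(r\,v(r\theta,r)\bigr)=-\frac{q(r\theta)}{8\pi}$ on the cone, and integration from the vertex using $q=0$ near the origin) is precisely the computation the paper carries out explicitly for the twin Proposition \ref{prop:sWwell}. Your Volterra/Picard treatment of existence, uniqueness and smoothness up to the cone is exactly the technical content delegated to \cite{Bl17}, as you yourself note, so the proposal matches the paper's route rather than departing from it.
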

We take $q=0$ in a neighborhood of the origin because the behavior of $v(x,t)$ is subtle near $x=0, t=0$ if $q(0) \neq 0$. 
Further, we need this assumption for the result stated below. 

Another longstanding open problem, which we call the {\bf point source problem},
is the injectivity, stability and inversion of the map
\[
q \to v|_{S \times [1,\infty)}. 
\]
Romanov has observed that $v|_{S \times [1,\infty)} =0$ implies $q=0$ and several people 
have observed that the map is injective for small $q$ - see \cite{RS11}.

\begin{figure}
\begin{center}
\epsfig{file=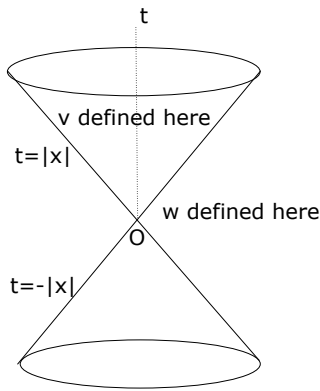, height=2in}
\end{center}
\caption{Domains of $v$ and $w$}
\label{fig:vwdef}
\end{figure}

Next we describe another inverse problem.  Consider the following
IVP associated to an incoming spherical wave
\begin{align}
\Box W + q W = 0, & \qquad (x,t) \in \R^3 \times \R,
\label{eq:sWde}
\\
W =  \frac{1}{4 \pi} \frac{\delta(t+|x|)}{|x|}, & \qquad x \in \R^3, ~~ t <-1.
\label{eq:sWic}
\end{align}
We show the following regarding the solution of this IVP.
\begin{prop}\label{prop:sWwell}
Suppose $q$ is a compactly supported smooth function on $\R^3$ with $q=0$ in a neighborhood of the origin.
The IVP (\ref{eq:sWde}), (\ref{eq:sWic}) has a unique distributional solution 
which is smooth on the region $t \neq \pm |x|$. Further, on the region $t<|x|$, $W(x,t)$ may be expressed as
\[
W(x,t) = \frac{1}{4 \pi} \frac{\delta(t+|x|)}{|x|} + w(x,t) H(t+|x|)
\]
where $w(x,t)$ is a smooth function on the region $-|x| \leq t < |x|$ and satisfies
\begin{align}
\Box w + qw =0, & \qquad (x,t) \in \R^3 \times \R, ~ -|x| < t < |x|, ~ x \neq 0
\label{eq:swde}
\\
w(x,-|x|) =  - \frac{1}{8 \pi} \int_1^\infty q(sx) \, ds, & \qquad x \in \R^3, ~ x \neq 0
\label{eq:swcc}
\\
w(x,t)=0, & \qquad -|x| \leq t < -1.
\label{eq:swic}
\end{align}
\end{prop}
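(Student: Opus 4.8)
The plan is to identify the structure of $W$ by a progressing wave expansion, read off the transport data along the incoming cone, and then construct $W$ rigorously as the solution of a causal Volterra integral equation built from the forward fundamental solution; existence, uniqueness, smoothness and the support properties will all follow from that representation, in close parallel with the point source case (Proposition \ref{prop:sVwell} / \cite{Bl17}).

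First I would motivate the ansatz. Substituting $W=\frac{1}{4\pi}\frac{\delta(t+|x|)}{|x|}+w\,H(t+|x|)$ into $\Box W+qW=0$ and sorting terms by their order of singularity across the cone $\{t=-|x|\}$, write $\psi=t+|x|$, so that $|\nabla_x\psi|^2=1$ and $\Box\psi=-2/|x|$. The coefficients of $\delta''(\psi)$ cancel automatically (the eikonal equation), and since $\frac{1}{4\pi|x|}$ is harmonic away from the origin the incoming delta contributes nothing off the origin except through $q$. The coefficient of $\delta(\psi)$, receiving contributions from $\Box(w\,H(\psi))$ and from $q\cdot\frac{1}{4\pi|x|}\delta(\psi)$, gives the transport equation $w_t-\nabla w\cdot\hat x-w/|x|=-q/(8\pi|x|)$ on the cone, while the coefficient of $H(\psi)$ gives $\Box w+qw=0$ in $\{-|x|<t<|x|\}$, which is (\ref{eq:swde}). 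Integrating the transport equation along each ray $x=r\theta$, $t=-r$ (so $\frac{d}{dr}w(r\theta,-r)=-w/r+q(r\theta)/(8\pi r)$, hence $(rw)'=q(r\theta)/(8\pi)$) subject to $w=0$ for $r>1$ (no interaction before the front meets $\mathrm{supp}\,q\subset\Bb$) yields $r\,w(r\theta,-r)=\frac{1}{8\pi}\int_\infty^r q(s\theta)\,ds$; after rescaling the integration variable this is exactly (\ref{eq:swcc}), and the same computation gives $w=0$ on the cone for $|x|>1$, which together with causality is (\ref{eq:swic}).

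To make this rigorous I would split $W=W_0+Z$ with $W_0=\frac{1}{4\pi}\frac{\delta(t+|x|)}{|x|}$ the backward fundamental solution, $\Box W_0=\delta(x,t)$. Since $q=0$ near the origin, $qW_0$ is a smooth surface density carried by $\{t=-|x|\}$, and the causal field $Z$ must solve $\Box Z+qZ=-\delta(x,t)-qW_0$. Writing $E_+=\frac{1}{4\pi}\frac{\delta(t-|x|)}{|x|}$ for the forward fundamental solution, this is equivalent to $Z=-E_+-E_+*(qW_0)-E_+*(qZ)$. The term $-E_+$ is supported on $\{t\geq|x|\}$ (the reflected, refocused singularity) and is invisible in the region $t<|x|$ of interest; the first scattered field $S:=-E_+*(qW_0)$ reduces, after integrating out both delta functions, to the surface integral $S(x,t)=-\int \frac{q(x')}{16\pi^2|x-x'|\,|x'|}\,\delta(t+|x'|-|x-x'|)\,dx'$, which (by the reverse triangle inequality) vanishes for $t<-|x|$, is smooth for $t>-|x|$, and whose trace on $\{t=-|x|\}$ agrees with (\ref{eq:swcc}).

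Finally, because $E_+*(q\,\cdot)$ is a causal operator whose iterates gain a factorial factor from the repeated time integration on any bounded time interval, the Neumann series $Z=\sum_{k\geq0}(-E_+*(q\,\cdot))^kS$ (modulo the reflected term) converges and inherits the support and regularity of $S$: it vanishes for $t<-|x|$ and is smooth for $-|x|<t<|x|$, so $Z=w\,H(t+|x|)$ there with $w$ the desired smooth tail solving (\ref{eq:swde})--(\ref{eq:swic}). Uniqueness of the distributional solution follows from the same Volterra structure (or from an energy estimate together with Holmgren's theorem), and smoothness of $W$ for $t\neq\pm|x|$ follows since the representation confines the singular support to $\{t=-|x|\}\cup\{t=|x|\}$. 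The main obstacle is the analysis of $S$ near the focusing vertex $(x,t)=(0,0)$, where the weight $1/|x'|$ and the geometry of the intersection surface degenerate; it is precisely the hypothesis that $q$ vanishes near the origin that keeps $q(x')/|x'|$ smooth on the relevant surface and guarantees that $S$, and hence $w$, extends smoothly up to (but not across) the cone $\{t=|x|\}$.
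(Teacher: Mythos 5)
You take a genuinely different route from the paper, and the comparison is instructive. The paper isolates the incoming singularity with a spatial cutoff $\chi$ supported away from the origin, reduces to an inhomogeneous IVP for the remainder whose source it shows to lie in $L^2(\R,H^s(\R^3))$ for $s<-7/2$, and then invokes H\"ormander's energy theory (Theorems 9.3.1--9.3.2 of \cite{Ho76}) for existence and uniqueness, the propagation of singularities theorem for smoothness off $\{t=\pm|x|\}$, and \cite{Ba18} for existence, uniqueness and smoothness of the Goursat solution $w$ on $-|x|\leq t<|x|$; the transport computation along the cone appears in the paper in essentially the same form as your progressing-wave expansion. Your Picard/Neumann construction from the forward fundamental solution, with the explicit first scattered wave $S=-E_+*(qW_0)$, is more self-contained and constructive: it exhibits the causal support, the two-cone singular support, and the trace (\ref{eq:swcc}) from a formula rather than from quoted theorems. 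Your assertions about $S$ are in fact correct -- in prolate spheroidal coordinates with foci $0$ and $x$ the integral becomes $-\frac{1}{32\pi^2|x|}\int q\,d\sigma\,d\phi$ over the sheet $\{|x-x'|-|x'|=t\}$, which is supported in $-|x|\leq t\leq|x|$, is smooth up to both cones, and collapses onto the ray $\{sx:s\geq1\}$ as $t\to-|x|^{+}$, yielding exactly $-\frac{1}{8\pi}\int_1^\infty q(sx)\,ds$ -- but this computation must be carried out, not asserted; it is where the hypothesis that $q$ vanishes near the origin does real work.

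Two steps, however, have genuine gaps. First, uniqueness: ``an energy estimate together with Holmgren's theorem'' fails as stated, since Holmgren requires analytic coefficients while $q$ is only smooth, and an energy estimate cannot be applied directly to an arbitrary distributional solution. The workable mechanism is the regularity gain in your own Volterra structure: a difference $D$ of two solutions vanishing in the past satisfies $D=(-E_+*(q\,\cdot))^kD$ for every $k$, and since convolution with $E_+$ gains one derivative in the scale $L^2_{loc}(\R,H^s(\R^3))$, finitely many iterations place $D$ in the energy class, where $D=0$ follows; you must also specify the distribution class in which uniqueness is claimed (the paper: locally $H^1(\R,H^s(\R^3))$, $s<-7/2$), since ``unique distributional solution'' is not meaningful without it. Second, regularity of the sum: the claim that the series ``inherits the support and regularity of $S$'' is not automatic. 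For $k\geq1$ the source $q\,(-E_+*(q\,\cdot))^{k-1}S$ fed into $E_+*$ has a jump across the characteristic cone $\{t=-|x|\}$, because the trace (\ref{eq:swcc}) is nonzero; smoothness of these iterates in the open region requires a wavefront-set/propagation argument (the conormal singularities of such a jump flow along the incoming cone, through the spatial origin, and re-emerge on $\{t=|x|\}$, cf.\ \cite{Ta81}), and smoothness up to the \emph{closed} cone -- which the proposition asserts on $-|x|\leq t<|x|$ -- requires a characteristic (Goursat) regularity argument, which is precisely the step the paper outsources to \cite{Ba18}. In addition, to conclude that the sum is smooth you need factorial bounds on all derivatives of the iterates, not merely the $L^\infty$ bounds supplied by repeated time integration.
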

The behavior of $W$ above the upper cone $t = |x|$ is subtle and perhaps not well understood.

Another open problem, proposed by Romanov, which we call the {\bf spherical wave problem}, is the injectivity, stability and 
inversion of the map
\[
q \to w|_{S \times [-1,1)}.
\]
By a unique continuation argument one may observe that $q=0$ if $w|_{S \times [-1,1)}=0$. 
In \cite{Ba18}, it was shown that the map is injective if $q$ is restricted to angularly controlled perturbations of a fixed $q_0$ or 
if $q$ is small in a certain norm. For the two dimensional case, in \cite{Ro02}, it was shown that the map is 
injective and
an inverse may be constructed provided $q(r \theta)$ is restricted to functions which are analytic in $\theta$.
The spherical wave problem remains open in all dimensions greater than $1$.

The spherical wave problem may be regarded as a type of backscattering problem with the difference that the 
data comes from the solution of only one IVP. For the backscattering problem we are given very limited data from each of 
a large number of solutions of the PDE. An inability to fruitfully combine data from many solutions is what makes the
backscattering problem difficult. We believe the spherical wave problem may be an easier problem and its solution may 
provide insight into the solution of the backscattering problem.

The point source problem and the spherical wave problem remain open, but given the data for both problems
the associated map is injective.

%
\begin{theorem}[Point source and spherical source data]\label{thm:pointspherical}
For $q$ which are smooth functions on $\R^3$ with support in $\Bb$ and zero in a neighborhood of the origin, let
$v(x,t)$, $w(x,t)$ be the functions in Proposition \ref{prop:sVwell} and Proposition \ref{prop:sWwell}. The map
\beqn
q \to \left [ v|_{S \times [1,3]}, w|_{S \times [-1,1)} \right ]
\label{eq:psmap}
\eeqn
is injective.
\end{theorem}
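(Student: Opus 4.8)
The plan is to run a Bukhgeim--Klibanov argument built on Carleman estimates, using the two characteristic cones to convert the vanishing of the boundary data into the vanishing of $f:=q_1-q_2$. Suppose $q_1,q_2$ are supported in $\Bb$, vanish near the origin, and produce identical data, and set $\tilde v=v_1-v_2$, $\tilde w=w_1-w_2$. Since $\Box v_i+q_iv_i=0$ on $t>|x|$ and $\Box w_i+q_iw_i=0$ on $-|x|<t<|x|$, the differences solve $\Box\tilde v+q_1\tilde v=-f\,v_2$ on $t>|x|$ and $\Box\tilde w+q_1\tilde w=-f\,w_2$ on $-|x|<t<|x|$. The hypothesis gives homogeneous Dirichlet data on the lateral pieces $S\times[1,3]$ and $S\times[-1,1)$, and, via the well-posed characteristic boundary value problems used in the Introduction, the corresponding Neumann traces vanish as well, so $\tilde v$ and $\tilde w$ carry \emph{homogeneous Cauchy data} on these lateral surfaces. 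This is exactly the Bukhgeim--Klibanov situation: the unknown $f$ is time-independent and appears only as a source multiplying the known, $q$-dependent fields $v_2,w_2$.

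The non-degeneracy that normally comes from a nonvanishing initial value here comes instead from the Goursat data. Propositions \ref{prop:sVwell} and \ref{prop:sWwell} give $\tilde v(x,|x|)=-\frac{1}{8\pi}\int_0^1 f(sx)\,ds$ and $\tilde w(x,-|x|)=-\frac{1}{8\pi}\int_1^\infty f(sx)\,ds$. Writing $x=r\theta$ with $r=|x|$, these read $r\,\tilde v(r\theta,r)=-\frac{1}{8\pi}\int_0^r f(\rho\theta)\,d\rho$ and $r\,\tilde w(r\theta,-r)=-\frac{1}{8\pi}\int_r^\infty f(\rho\theta)\,d\rho$, so that $f(r\theta)=\mp 8\pi\,\partial_r\big(r\,(\cdot)\big)$ for each of the two cone traces. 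Hence $f$ is bounded in $L^2(B)$ by a first-order derivative of the traces of $\tilde v$ and $\tilde w$ on the forward and backward light cones $t=\pm|x|$; this is the quantitative statement I will use to absorb the source once a Carleman estimate is in hand.

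The core step is to introduce a weight $\phi$ adapted to the light-cone geometry and, after differentiating the equations once in $t$ (the Bukhgeim--Klibanov differentiation, replacing the sources by $-f\,\partial_t v_2$ and $-f\,\partial_t w_2$), to run Carleman estimates for $\partial_t\tilde v$ and $\partial_t\tilde w$ on the regions above and below the forward cone. The essential point is that the incoming spherical wave supplies $\tilde w$ on $-|x|<t<|x|$ while the point source supplies $\tilde v$ on $t>|x|$, so together the two experiments give the field on \emph{both sides} of the characteristic surface $t=|x|$ (equivalently, control of both cones $t=\pm|x|$); this is precisely what a single experiment cannot furnish, which is why each problem is open separately while the combined map is injective. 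The estimate bounds weighted norms of $\tilde v,\tilde w$ and their cone traces by $\int e^{2\lambda\phi}\big(|f\,\partial_tv_2|^2+|f\,\partial_tw_2|^2\big)$ plus lateral boundary terms. The boundary terms vanish by the homogeneous Cauchy data, the zeroth-order contributions $q_1\tilde v,q_1\tilde w$ are absorbed for $\lambda$ large, and the source is absorbed using the trace identity above together with the time-independence of $f$ (so that $e^{2\lambda\phi(x,t)}$ is dominated by its value at the cone) and a large parameter. Letting $\lambda\to\infty$ forces the cone traces, and hence $f$, to vanish.

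The hard part will be the characteristic nature of the cones $t=\pm|x|$, which are exactly the surfaces where standard Carleman weights for $\Box$ degenerate and where the boundary terms produced by integration by parts are delicate. Constructing a weight that is pseudoconvex up to these surfaces and showing that the characteristic-surface boundary contributions carry a usable sign---which is feasible only because the two measurements provide data on both sides of $t=|x|$---is the principal technical obstacle, together with the simultaneous absorption of the unknown source $f$, which couples the estimates in the two regions and requires that the Goursat recovery of $f$ and the large-parameter absorption be arranged consistently across the common cone.
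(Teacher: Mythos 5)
Your overall framework shares several ingredients with the paper's proof: recovery of $f=q_1-q_2$ from the Goursat traces via $f(r\theta)=\mp 8\pi\,\partial_r\bigl(r\cdot(\text{trace})\bigr)$, deduction of zero Neumann data from zero Dirichlet data by an exterior uniqueness argument (the paper's Lemma \ref{lemma:sexterior}), and absorption of the source using the fact that the Carleman weight is a decreasing function of $|t-|x||$ for fixed $x$, so that it is maximized on the cone (the paper's Lemma \ref{lemma:sintegral}). But there is a genuine gap exactly at the point you yourself flag as ``the principal technical obstacle'': you propose running two separate Carleman estimates, for $\partial_t\tilde v$ on $\{t\ge|x|\}$ and for $\partial_t\tilde w$ on $\{-|x|<t<|x|\}$, and then coupling them through boundary terms on the characteristic cone $t=|x|$ that ``carry a usable sign.'' No such sign structure is exhibited, and this is not how the difficulty is resolved in the paper. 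The paper's key device is to \emph{eliminate the characteristic boundary altogether}: it glues the two fields into a single function on $K$ by a time reversal and a sign flip, setting $\alpha(x,t)=(v_1-v_2)(x,t)$ for $t\ge|x|$ and $\alpha(x,t)=-(w_1-w_2)(x,-t)$ for $-|x|<t\le|x|$. The time reversal transports the Goursat data of $w_1-w_2$ from the backward cone $t=-|x|$ to the forward cone, and a direct distributional computation shows that the $\delta(t-|x|)$ terms produced by the gluing cancel identically, because $r$ times the jump of $\alpha$ across the cone equals $-\frac{1}{8\pi}\int_0^\infty(q_1-q_2)(s\theta)\,ds$, which is independent of $r$ (equation (\ref{eq:sacc1})). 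Equality of the measured data then makes this jump zero, so $\alpha$ is continuous across the cone, satisfies $(\Box+q_1)\alpha=-(q_1-q_2)\tilde v_2$ distributionally on all of $K$, and has vanishing Cauchy data on $S\times(-1,3]$; the single-function Carleman argument of Proposition \ref{prop:sprop}, applied to the cutoff $\chi\alpha$ (compactly supported, so the cone is an interior surface and no boundary terms arise in Tataru's estimate), then gives $q_1=q_2$. Without this gluing idea, or a concrete replacement for it, your plan does not close: a Carleman estimate up to a characteristic boundary is precisely what you cannot take for granted.

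Two smaller points. First, the Bukhgeim--Klibanov time differentiation you invoke is unnecessary and in fact plays no role even in your own sketch: the paper never differentiates in $t$; the source $(q_1-q_2)\tilde v_2$ is absorbed directly because the weight peaks on the cone, where the trace identity controls $q_1-q_2$. Second, you never address the origin: the weight $a|x|^2-(t-|x|)^2$ is smooth and pseudoconvex with respect to $\Box$ only away from $x=0$ (Lemma \ref{lemma:sweight}), which is why the paper assumes $q=0$ near the origin, works on the annular region $\epsilon\le|x|\le 1$, cuts off with a function $\chi$ vanishing near $|x|=\epsilon$, near $t=3$, and near $t=-|x|$, and chooses $a\in\bigl(4,\,4(1-9\epsilon^2)^{-1}\bigr)$ so that the weight's values on the cone dominate its values on the set where $\nabla\chi\neq 0$; these choices are what make the final absorption and the limit $\sigma\to\infty$ work.
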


The article \cite{La19} describes two other pairs of data which lead to results similar to Theorem \ref{thm:twop1} and 
Theorem \ref{thm:pointspherical} by using Proposition \ref{prop:planeprop} and Proposition \ref{prop:sprop} in our article.

To prove Theorem \ref{thm:twop1} and Theorem \ref{thm:pointspherical} we use 
the two solutions of the PDE to construct an $H^1$ solution of $(\Box + q) \alpha =0$ on a cylindrical region 
$\Bb \times [T_1, T_2]$, for some $T_1, T_2$, such that $\alpha$ restricted to a characteristic surface
(either $t=z$ or $t=|x|$) in the interior
of the region  is an integral of $q$. Then we 
adapt the technique in \cite{IY01} to prove stability for certain hyperbolic inverse problems.  
In \cite{IY01} (see \cite{BY17} for a better organized presentation), 
the $\alpha|_{t=0}$ is related to $q$ where as, for our problems, $\alpha$ restricted to a characteristic surface is related to $q$; 
thus a need to adapt the technique in \cite{IY01}. 

The technique in \cite{IY01} is itself a modification of the breakthrough ideas, introduced in \cite{BK81}, for solving formally determined 
inverse problems for 
hyperbolic and parabolic PDEs. However, the 
problem studied in \cite{BK81} and \cite{IY01} required a source in the form of an initial condition $\alpha(x,0) = f(x)$ with
$f(x)>0$ at each 
point on the domain. In geophysical and some other applications, such sources are difficult to generate and the 
preferred source is an impulsive source such as a point source $\alpha(x,0) = \delta(x)$ or a plane wave source 
$\alpha(x,t) = \delta(t-x \cdot \omega)$ for
$t<<0$. Our results are for these impulsive sources in space dimension greater than one, for which there are just a few results
- we have mentioned some results earlier and \cite{Kl05} is interesting. 
For a survey of the results for problems associated with a source of the form 
$\alpha(x,0) = f(x)$ with $f(x)>0$ at each point in the domain,
as in \cite{Be04}, \cite{Kh89}, \cite{IY01}, \cite{SU13} and several other articles, we refer the reader to 
\cite{BY17},  \cite{Kl13}, \cite{Bu00} and \cite{Is06}.


We use the following notation through out this article.
 Given $x \in \R^n$, $n>1$ we may write $x$ as $x=(y,z)$ with $y \in \R^{n-1}, z \in \R$, or we may write
a non-zero $x$ as $x = r \theta$ where $r=|x|$ and $\theta=x/|x|$. We define the radial and angular derivatives
\[
\pa_r = \frac{x}{|x|} \cdot \nabla, 
\qquad \Omega_{ij} = x_i \pa_j  -x_j \pa_i, \qquad i,j=1,2, \cdots, n, ~ i \neq j
\]
and note that
\beqn
|\nabla f(x)|^2 = |f_r|^2 + \frac{1}{2r^2} \sum_{i \neq j} |\Omega_{ij} f(x)|^2.
\label{eq:fangular}
\eeqn
Further $e:=(0,0,\cdots,0,1)$,
$B$ will denote the open unit ball, $S$ its boundary  and $\Box= \pa_t^2 - \Delta_x$. We say the map 
$f:X\to Y$ is stable if $f$ is injective and its inverse is locally Lipschitz for some norms on $X$ and $Y$. 
We say $ a \cleq b$ if $a \leq C b$ for some constant $C$. 

If $k$ is a non-negative integer, $A$ is the closure of a bounded open subset of $\R^n$ and $f:A \to \R$ then 
$\|f\|_{H^k(A)}$ will denote the 
Sobolev space norm of $f$ on $A$, and for a fixed weight $\psi$ and $\sigma>0$
\[
\| f\|_{1,\sigma,A} :=  \left ( \int_A e^{2 \sigma \psi} ( |\nabla f|^2 + \sigma^2 |f|^2) \right )^{1/2},
\qquad
\|f\|_{0,\sigma,A} :=  \left ( \int_A e^{2 \sigma \psi} \, |f|^2  \right )^{1/2}.
\]
If $A$ is a bounded hypersurface in $\R^n$ and $f:A \to \R$ then $\|f\|_{H^k(A)}$ will denote the Sobolev space norm
of $f$ on $A$, with derivatives only in directions tangential to $A$, and
\[
\| f\|_{1,\sigma,A} :=  \left ( \int_A e^{2 \sigma \psi} ( |\nabla_A f|^2 + \sigma^2 |f|^2) \right )^{1/2},
\qquad
\|f\|_{0,\sigma,A} :=  \left ( \int_A e^{2 \sigma \psi} \, |f|^2  \right )^{1/2}
\]
where $\nabla_A f$ denotes the gradient of $f$ on $A$ made up only of derivatives of $f$ in directions tangential 
to $A$. Further, for any bounded real valued function $f$, the supremum of $|f|$ will be denoted by $\|f\|_\infty$.
%



\section{Proofs for the two plane wave sources problem}

%
%

We define the following useful subsets of $\R^n \times \R$;
\begin{align*}
Q := \Bb \times [-T,T], & \qquad \Sigma := S \times [-T,T], &  \hspace{-0.5in} \Gamma = Q \cap \{t=z\},
\\
Q_{\pm} := Q \cap \{ \pm(t-z) \geq 0 \},  & \qquad \Sigma_{\pm} = \Sigma \cap \{ \pm (t-z) \geq 0 \}, &
\\
Q_{+,\tau} = Q_+ \cap \{ t \leq \tau \}, & \qquad  \Sigma_{+,\tau} = \Sigma_+ \cap \{t \leq \tau \}, &
\end{align*}
for any $\tau \in \R$.

\subsection{The main proposition for plane wave sources}

The following proposition is crucial in the proof of Theorem \ref{thm:twop1}. We postpone its proof to 
subsection \ref{subsec:planeprop}.
Suppose $p(x)$ and $q(x)$ are smooth functions on $\R^n$, $n>1$, with support in $\Bb$. Define
\[
\rhob(y) = \int_\R p(y,z) \, dz
\]
and let $\rho(x,t) \in H^1(Q_-)$ be the solution, guaranteed by Lemma \ref{lemma:pweak}, of the characteristic BVP
\begin{align*}
(\Box + q) \rho =0, & \qquad \text{on} ~ Q_-,
\\
\rho(y,z,z) = \rhob(y), & \qquad (y,z) \in \Bb,
\\
\rho(y,z,t) = \rhob(y), & \qquad (y,z,t) \in \Sigma_-.
\end{align*}
%
\begin{prop}\label{prop:planeprop}
Suppose $p$, $q$ and $\rho$ are as above, $f$ a bounded function on $Q$, and $\alpha \in H^1(Q)$ with
$\alpha$ smooth on $Q_+$, $\alpha + \rho$ smooth on $Q_-$, $ \alpha|_\Sigma \in H^1(\Sigma), 
~ \pa_\nu \alpha \in L^2(\Sigma)$. If 
\begin{align}
(\Box + q) \alpha(x,t) = p(x) f(x,t), & \qquad (x,t) \in Q, ~~ (\text{as distributions})
\label{eq:alphade}
\\
\alpha(y,z,z) = \int_{-\infty}^z p(y,s) \,ds, & \qquad (y,z) \in \Bb,
\label{eq:alphacc}
\end{align}
then
\beqn
\| p\|_{L^2(\Bb)} \cleq \| \alpha \|_{H^1(\Sigma)} + \| \pa_\nu \alpha\|_{L^2(\Sigma)}
+ \|\rho\|_{H^1(\Sigma \cap \Gamma)},
\label{eq:alphaineq}
\eeqn
provided $T>6$. Here the constant depends only on $\|q\|_\infty$, $\|f\|_\infty$ and $T$.
\end{prop}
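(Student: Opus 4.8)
The first step is to convert the statement about $p$ into a statement about the trace of $\alpha$ on the characteristic surface $\Gamma$. Differentiating the characteristic condition $\alpha(y,z,z)=\int_{-\infty}^z p(y,s)\,ds$ in $z$ and using the chain rule gives $(\partial_z+\partial_t)\alpha(y,z,z)=p(y,z)$ for $(y,z)\in\Bb$. Since $\Gamma=\{t=z\}$ and the vector $\partial_z+\partial_t$ is tangent to $\Gamma$, this exhibits $p$ as a tangential derivative of $\alpha|_\Gamma$ along $\Gamma$; in particular $\|p\|_{L^2(\Bb)}^2$ equals, up to the constant Jacobian of the map $(y,z)\mapsto(y,z,z)$, the quantity $\int_\Gamma|(\partial_z+\partial_t)\alpha|^2\,dS$. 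Thus it suffices to control this interior trace by the boundary data, and the natural tool is a Carleman estimate that simultaneously (i) concentrates mass on $\Gamma$ and (ii) produces $\int_\Gamma|(\partial_z+\partial_t)\alpha|^2$ with a favorable sign.

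Accordingly, the plan is to fix a weight $\psi$ that is pseudoconvex for $\Box$ on $Q$, whose restriction to each vertical fibre $\{(y,z)\}\times\{t\}$ attains its maximum on $\Gamma$ (i.e. at $t=z$) and decays strictly away from it, and whose values on the faces $\{t=\pm T\}$ are strictly smaller than on $\Gamma$. The hypothesis $T>6$ is what guarantees such a $\psi$ exists with its pseudoconvexity region covering all of $\Bb$ and the endpoint faces separated from $\Gamma$ and pushed far down in the weight. I would then apply the associated Carleman estimate to $\alpha$ (after cutting off in $t$ near $\pm T$, so that the commutator terms live where $\psi$ is very negative and are harmless for large $\sigma$). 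Because $\alpha$ is only $H^1$ across $\Gamma$ while being smooth on $Q_+$ and, together with $\rho$, on $Q_-$, the integrations by parts must be carried out on $Q_+$ and $Q_-$ separately. The $Q_+$-side of $\Gamma$ contributes exactly the good term $\sigma\int_\Gamma e^{2\sigma\psi}|(\partial_z+\partial_t)\alpha|^2\,dS=\sigma\int_\Gamma e^{2\sigma\psi}|p|^2\,dS$ on the left; the lateral faces contribute the data terms $\|\alpha\|_{H^1(\Sigma)}$ and $\|\partial_\nu\alpha\|_{L^2(\Sigma)}$; and the $Q_-$-side of $\Gamma$ is rewritten through $\alpha+\rho$ and the boundary condition $\rho|_\Gamma=\rhob$, where a tangential integration by parts along $\Gamma$ throws the remaining terms onto the edge $\partial\Gamma=\Sigma\cap\Gamma$ and accounts for the term $\|\rho\|_{H^1(\Sigma\cap\Gamma)}$.

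The crux of the argument is absorbing the source $(\Box+q)\alpha=pf$. Since $|pf|\le\|f\|_\infty|p|$ and $p=p(x)$ does not depend on $t$, on each fibre one has $\int e^{2\sigma\psi(y,z,t)}|p(y,z)|^2\,dt=|p(y,z)|^2\int e^{2\sigma\psi}\,dt$, and the concentration of $\psi$ at $t=z$ yields, by a Watson/Laplace-type bound, $\int_{Q_+\cup Q_-}e^{2\sigma\psi}|p|^2\cleq\sigma^{-1}\int_\Gamma e^{2\sigma\psi}|p|^2\,dS$. Hence the weighted source is bounded by $C\|f\|_\infty^2\,\sigma^{-1}\int_\Gamma e^{2\sigma\psi}|p|^2\,dS$, which for $\sigma$ large is strictly smaller than the good left-hand term $\sigma\int_\Gamma e^{2\sigma\psi}|p|^2$ and can be absorbed. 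After absorption one is left with $\sigma\int_\Gamma e^{2\sigma\psi}|p|^2\cleq(\text{weighted }\Sigma\text{ data})+(\text{weighted }\Sigma\cap\Gamma\text{ data})$.

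Finally, fixing $\sigma$ at a value determined by $\|q\|_\infty$, $\|f\|_\infty$ and $T$ (large enough for all the absorptions to hold) and discarding the now-harmless bounded weights $e^{2\sigma\psi}$ on the relevant compact sets turns this into $\int_\Gamma|p|^2\cleq\|\alpha\|_{H^1(\Sigma)}^2+\|\partial_\nu\alpha\|_{L^2(\Sigma)}^2+\|\rho\|_{H^1(\Sigma\cap\Gamma)}^2$, which combined with the first step gives the claimed bound on $\|p\|_{L^2(\Bb)}$. I expect the main obstacle to be the rigorous bookkeeping of the boundary and interface contributions: verifying that the characteristic face $\Gamma$ (where the conormal derivative degenerates and $\alpha$ is merely $H^1$) really produces the term $\int_\Gamma e^{2\sigma\psi}|p|^2$ with the correct sign, that the $Q_-$-contributions can be packaged into the single edge term involving $\rho$, and that the endpoint faces $\{t=\pm T\}$ drop out — all of which hinge on constructing a weight with the listed properties and on the hypothesis $T>6$.
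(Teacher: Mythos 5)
Your proposal follows the same overall Bukhgeim--Klibanov skeleton as the paper: the trace identity $(\pa_z+\pa_t)\alpha(y,z,z)=p(y,z)$, a pseudoconvex weight whose values on $\Gamma$ dominate its values on $\{|t|=T\}$ (this is where $T>6$ enters, exactly as in the paper), a cutoff in $t$ near $\pm T$, a Carleman estimate, and absorption of the source $pf$ by the fibre-wise concentration $\int_Q e^{2\sigma\psi}|p|^2\cleq h(\sigma)\int_B e^{2\sigma\psi(y,z,z)}|p|^2$ with $h(\sigma)\to 0$ (the paper's Lemma \ref{lemma:integral}). However, the two steps you dismiss as ``bookkeeping'' are genuine gaps, and they are precisely where the paper does its real work. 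First, you propose to run the Carleman integration by parts on $Q_+$ and $Q_-$ separately and to harvest from the interface a favorably signed term $\sigma\int_\Gamma e^{2\sigma\psi}|(\pa_z+\pa_t)\alpha|^2$ ``on the left.'' No available theorem delivers this: $\Gamma$ is a \emph{characteristic} surface for $\Box$, while the Carleman estimate with boundary terms used here (Theorem 1 of \cite{Ta96}) requires the strong Lopatinskii condition on the boundary, which is how the paper treats the lateral boundary $\Sigma$, not $\Gamma$. If instead you produce the $\Gamma$-term by an explicit multiplier computation on $Q_+$ (integrating $2\mu_t(\Box\mu+\sigma^2\mu)$ with $\mu=e^{\sigma\psi}\alpha$), it indeed appears with a sign, but paired against the \emph{interior energy} $E(\tau)$, not against data; so it is not free. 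The paper's actual structure is different: because $\alpha\in H^1(Q)$ across $\Gamma$ --- which is the entire purpose of the correction $\rho$ --- the Carleman estimate is applied \emph{once, on all of $Q$}, with no interface terms at $\Gamma$; the trace $\|\alpha\|_{1,\sigma,\Gamma}$ is then controlled by the separate energy estimate of Lemma \ref{lemma:neartez} in terms of $\sigma\|\beta\|_{1,\sigma,Q_+}^2$ plus source and lateral data, and the two inequalities are coupled, with the dangerous term $e^{-2\sigma\delta}\|\alpha\|_{1,\sigma,\Gamma}^2$ absorbed for large $\sigma$.

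Second, the assertion that the cutoff commutator terms are ``harmless for large $\sigma$'' does not suffice for a stability estimate with a constant depending only on $\|q\|_\infty$, $\|f\|_\infty$ and $T$. These terms are bounded by $e^{2\sigma(c-\delta)}\int_{B\times\{T'\leq|t|\leq T\}}(|\alpha|^2+|\alpha_t|^2)$, and since the data terms in the final inequality carry a factor $e^{k\sigma}$, you must in the end \emph{fix} $\sigma$ rather than send $\sigma\to\infty$; at a fixed $\sigma$ the local energy near $|t|=T$ is a quantity depending on the unknown solution $\alpha$, which is not an admissible right-hand side. The paper eliminates it with a forward energy estimate (Lemma \ref{lemma:neartet}, applied to $\alpha$ on $Q_+$ and to $\alpha+\rho$ on $Q_-$, together with the bounds of Lemma \ref{lemma:pweak} controlling $\rho$ by $\|\rho\|_{H^1(\Sigma\cap\Gamma)}$), which dominates it by $\int_B|p|^2+\|\alpha\|_{H^1(\Gamma)}^2+\|\alpha\|_{H^1(\Sigma)}^2+\|\pa_\nu\alpha\|_{L^2(\Sigma)}^2+\|\rho\|_{H^1(\Sigma\cap\Gamma)}^2$; the first two terms are then re-absorbed into the weighted quantities already in play. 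Without this step your final constant depends on $\alpha$ itself, so the argument as proposed proves at best a uniqueness statement (where data $=0$ lets one take $\sigma\to\infty$), not the claimed Lipschitz-type bound.
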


%
%

\subsection{Proof of Theorem \ref{thm:twop1}}

Suppose $q_i$, $i=1,2$ are smooth functions on $\R^n$ with support in $\Bb$, $u_i(x,t,e)$, $u_i(x,t,-e)$ the 
corresponding solutions of (\ref{eq:pude})-(\ref{eq:puic}) for $\omega=e,-e$. Then, by Proposition \ref{prop:pwell}, 
\begin{align*}
(\Box + q_1)( u_1(x,t,e) - u_2(x,t,e)) = - (q_1-q_2)(x) u_2(x,t,e), & \qquad (x,t) \in \R^n \times \R \cap  \{  z \leq t  \},
\\
(u_1 - u_2)(y,z,z,e) = -\frac{1}{2}  \int_{-\infty}^z (q_1 - q_2) (y,s) \, ds,  & \qquad (y,z) \in \R^n,
\\
(u_1 - u_2)(x,t,e) = 0, & \qquad (x,t) \in \R^n \times \R \cap \{z \leq t < -1 \},
\end{align*}
and
\begin{align*}
(\Box + q_1)( u_1(x,t,-e) - u_2(x,t,-e)) = - (q_1-q_2)(x) u_2(x,t,-e), & \qquad (x,t) \in \R^n \times \R \cap \{ -z \leq t  \},
\\
(u_1 - u_2)(y,z,-z,-e) = -\frac{1}{2}  \int_z^\infty (q_1 - q_2) (y,s) \, ds, & \qquad (y,z) \in \R^n,
\\
(u_1 - u_2)(x,t,-e) = 0, & \qquad (x,t) \in \R^n \times \R \cap \{-z \leq t < -1 \}.
\end{align*}
Define
\[
\rhob(y) =  \frac{1}{2} \int_\R (q_1 - q_2)(y,z) \, dz, \qquad y \in \R^{n-1},
\]
and let $\rho \in H^1(Q_-)$ be the solution, guaranteed by Lemma \ref{lemma:pweak}, of the characteristic BVP
\begin{align*}
(\Box + q_1) \rho = 0, & \qquad (x,t) \in Q_-,
\\
\rho(y,z,z) = \rhob(y), & \qquad (y,z) \in \Bb,
\\
\rho(y,z,t) = \rhob(y), & \qquad (y,z,t) \in \Sigma_-.
\end{align*}
For $(x,t) \in Q$, define
\[
\ut_2(x,t) = \begin{cases} u_2(x,t,e), & \qquad t \geq z, \\  - u_2(x,-t,-e), & \qquad t<z, \end{cases}
\]
and 
\[
\alpha(x,t) = \begin{cases} (u_1-u_2)(x,t,e), & \qquad t \geq z, \\  - (u_1 - u_2)(x,-t,-e)  - \rho(x,t), & \qquad t<z .\end{cases}
\]
The function $\alpha$ combines the measurements associated with the directions $e$ and $-e$, and the function $\rho$ has 
been subtracted in $Q_-$ so $\alpha$ is $H^1$ across the plane $ t=z$. The function $\rho$ is 
only required for the stability estimate; if one is only interested in a uniqueness result for the inverse problem, the function $\rhob$ 
will be zero if the data for $q_1$ and $q_2$ agree, hence $\rho \equiv 0$ in this case.

Using Lemma \ref{lemma:pweak}, we see that for $(x,t) \in Q$ we have in the sense of distributions 
\begin{align*}
(\Box + q_1) \alpha (x,t) & = (\Box + q_1) ( (u_1-u_2)(x,t,e) H(t-z) - (u_1 - u_2)(x,-t,-e) H(z-t) )
\\
& = - (q_1-q_2) \ut_2 + 2 \delta(t-z)  (\pa_t + \pa_z) (u_1 - u_2)(x,t,e)
\\
& \qquad  + 2 \delta(t-z) (\pa_t + \pa_z) ( ( u_1 - u_2)(x,-t,-e) )
\\
& =  - (q_1-q_2) \ut_2 + 2 \delta(t-z)  (\pa_t + \pa_z) (u_1 - u_2)(x,t,e) 
\\
& \qquad + 2 \delta(t-z) (-\pa_t + \pa_z) ( u_1 - u_2)(x,-t,-e) 
\\
& =  - (q_1-q_2) \ut_2 + 2 \delta(t-z)  (\pa_t + \pa_z) (u_1 - u_2)(y,z,z,e) 
\\
& \qquad + 2 \delta(t-z) (-\pa_t + \pa_z) ( u_1 - u_2)(y,z,-z,-e) 
\\
& = - (q_1-q_2) \ut_2 +2 \delta(t-z) \frac{d}{dz} ( (u_1-u_2)(y,z,z,e) + (u_1-u_2)(y,z,-z,-e) )
\\
& =  - (q_1-q_2) \ut_2- \delta(t-z) \frac{d}{dz} \left ( \int_{-\infty}^z q(y,s) \, ds + \int_z^\infty q(y,s) \, ds \right )
\\
& = - (q_1-q_2)\ut_2 - \delta(t-z) \frac{d}{dz} \int_\R q(y,s) \, ds 
\\
& =  - (q_1-q_2)(x) \ut_2(x,t).
\end{align*}
Further, for $(y,z) \in \Bb$, we have
\begin{align*}
\alpha(y,z,z+) & = (u_1 - u_2)(y,z,z,e) = - \frac{1}{2} \int_{-\infty}^z (q_1 - q_2)(y,s) \, ds
\\
\alpha(y,z,z-) & = - (u_1 - u_2)(y,z,-z,-e) - \rho(y,z,z)
 = \frac{1}{2} \int_z^\infty (q_1 - q_2)(y,s) \, ds - \frac{1}{2}  \int_\R (q_1 - q_2)(y,s) \, ds
\\
& = - \frac{1}{2} \int_{-\infty}^z (q_1 -q_2)(y,s) \, ds;
\end{align*}
so
\[
\alpha(y,z,z+) = \alpha(y,z,z-).
\]
Hence, $\alpha \in H^1(Q)$ with
\begin{align*}
(\Box + q_1) \alpha = -(q_1-q_2) \ut_2, & \qquad (x,t) \in Q,
\\
\alpha(y,z,z) = - \frac{1}{2} \int_{-\infty}^z (q_1 - q_2)(y,s) \, ds, & \qquad (y,z) \in \Bb.
\end{align*}
Further  $\alpha|_\Sigma \in H^1(\Sigma)$ and $\pa_\nu \alpha \in L^2(\Sigma)$ - we verify this at the end of this proof.
So from Proposition \ref{prop:planeprop} we have
\begin{align}
\| q_1 - q_2 \|_{L^2(B)} 
& \cleq \| \alpha \|_{H^1(\Sigma)} + \|\pa_\nu \alpha \|_{L^2(\Sigma)}
+  \| \rho \|_{H^1(\Sigma \cap \Gamma)}
\label{eq:pproptemp}
\end{align}
with the constant dependent only on the supremum of  $\ut_2$ on $Q$, $\|q_1\|_\infty$ and $T$, hence dependent 
only on $T$ and $\|q_i\|_{C^{n+4}}$,  $i=1,2$.
Using the definition of $\alpha$ and Lemma \ref{lemma:pexterior} (together with the analogue of Lemma \ref{lemma:pexterior} in $\{ t < z \}$) we have
\begin{align}
\|\pa_\nu \alpha \|_{L^2(\Sigma)} 
& \cleq \| \pa_{\nu} ( u_1 - u_2) (\cdot, \cdot, e) \|_{L^2(\Sigma_+)}
+ \| \pa_{\nu} ( u_1 - u_2) (\cdot, \cdot, -e) \|_{L^2(\Sigma_-)}
+ \|\pa_\nu \rho \|_{L^2(\Sigma_-)}
+ \|\rho\|_{H^1(\Sigma \cap \Gamma)} 
\nn
\\
& \cleq \|\alpha\|_{H^1(\Sigma_+)} +  \|\alpha\|_{H^1(\Sigma \cap \Gamma)}
+ \|\alpha + \rho \|_{H^1(\Sigma_-)}  + \|\alpha + \rho \|_{H^1(\Sigma \cap \Gamma)}
+ \| \pa_\nu \rho \|_{L^2(\Sigma_-)}
+ \|\rho\|_{H^1(\Sigma \cap \Gamma)} 
\nn
\\
& \cleq  \|\alpha\|_{H^1(\Sigma)} 
+  \| \alpha \|_{H^1(\Sigma \cap \Gamma)} +  \| \rho \|_{H^1(\Sigma_-)} + \| \rho \|_{H^1(\Sigma \cap \Gamma)}
+  \| \pa_\nu \rho \|_{L^2(\Sigma_-)}
\nn
\\
& \cleq  \|\alpha \|_{H^1(\Sigma)} 
+  \| \alpha \|_{H^1(\Sigma \cap \Gamma)} + \| \rho \|_{H^1(\Sigma \cap \Gamma)},
\label{eq:temparho}
\end{align}
where we used Lemma \ref{lemma:pweak} in the last step.
For $(y,z,z) \in \Sigma \cap \Gamma $ we have 
\begin{align*}
\rho(y,z,z) & = \rhob(y)  = \frac{1}{2} \int_\R q_1(y,s) \, ds - \frac{1}{2} \int_\R q_2(y,s) \, ds
\\
& =  \frac{1}{2} \int_{-\infty}^{\sqrt{1-|y|^2}}  q_1(y,s) \, ds  - \frac{1}{2} \int_{-\infty}^{\sqrt{1-|y|^2}}  q_2(y,s) \, ds
\\
& = (u_2 - u_1)(y, \sqrt{1-|y|^2}, \sqrt{1-|y|^2}, e)
\\
& = (u_2 - u_1)(y, |z|, |z|,e)
\\
& =-\alpha(y,|z|,|z|)
\end{align*}
Hence
\beqn
\|\rho\|_{H^1(\Sigma \cap \Gamma)} \cleq \|\alpha\|_{H^1( \Sigma \cap \Gamma)}
\label{eq:tempa1b1}
\eeqn
and using this in (\ref{eq:temparho}) we obtain
\[
\|\pa_\nu \alpha \|_{L^2(\Sigma)}  \cleq  \|\alpha \|_{H^1(\Sigma)} 
+  \| \alpha \|_{H^1(\Sigma \cap \Gamma)}.
\]
Inserting these estimates in \eqref{eq:pproptemp} and using  (\ref{eq:tempa1b1}) and Lemma \ref{lemma:pweak}
we have
\begin{align*}
\| q_1 - q_2 \|_{L^2(B)} 
&  \cleq \| \alpha \|_{H^1(\Sigma)} +  \| \alpha \|_{H^1(\Sigma \cap \Gamma)}
\\
& 
\cleq  \|\alpha \|_{H^1(\Sigma_+)} +  \| \alpha + \rho \|_{H^1(\Sigma_-)} + 
\|\rho\|_{H^1(\Sigma_-)} +  \| \alpha \|_{H^1(\Sigma \cap \Gamma)}
\\
& 
\cleq  \|\alpha \|_{H^1(\Sigma_+)} +  \| \alpha + \rho \|_{H^1(\Sigma_-)} +  \| \alpha \|_{H^1(\Sigma \cap \Gamma)},
\end{align*}
and the theorem is proved except for the verification of the claims 
$\alpha|_\Sigma \in H^1(\Sigma)$ and $\pa_\nu \alpha \in L^2(\Sigma)$.

Now, by definition, $\alpha|_{\Sigma_+}$, $(\alpha+\rho)|_{\Sigma_-}$ and $\rho|_{\Sigma_-}$
are smooth and $\alpha$ is continuous across $\Gamma \cap \Sigma$, hence $\alpha|_\Sigma \in H^1(\Sigma)$.
Further, $(\Box+q_1)\alpha = - (q_1-q_2) \tilde{u}_2 \in L^2(Q)$, $\alpha$ is smooth near $t=T$ and $\alpha \in H^1(Q)$,
so $\alpha$ is a solution of a backward IBVP for a hyperbolic PDE with RHS in $L^2$, smooth initial data, and $H^1$ Dirichlet boundary 
data, so $\pa_\nu \alpha \in L^2(\Sigma)$ by Theorem 3.1 in \cite{BY17}.

%
%

\subsection{Proof of Proposition \ref{prop:planeprop}}\label{subsec:planeprop}

\begin{figure}[h]
\begin{center}
\epsfig{file=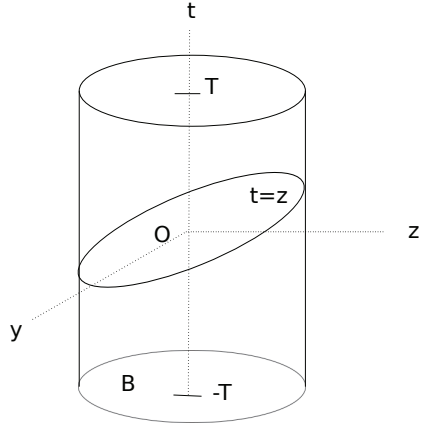, height=2.2in}
\end{center}
\label{fig:zt}
\caption{The regions $Q$ and $Q_{\pm}$}
\end{figure}

For any $a > 1$ define
\[
\phi(y,z,t) := 5(a-z)^2 + 5 |y|^2 - (t-z)^2.
\]
From Lemma \ref{lemma:pweight}, for large enough $\lambda$, 
\[
\psi(y,z,t) = e^{\lambda \phi(y,z,t)}
\]
is strongly pseudoconvex (Definition 1.1 in \cite{Ta96}) w.r.t $\Box + q$ in a neighborhood of $Q$. 

Since $T>6$, we claim there is an $a>1$ such that the smallest value of $\phi$ on $Q \cap \{ t=z \}$ is strictly 
larger than the largest value of $\phi$ on $Q \cap \{ |t|=T\}$. The largest value of $\phi$ on $Q \cap \{ |t|=T\}$ is bounded
above by $ 5(a+1)^2 + 5 - (T-1)^2$ and the smallest value of $\phi$ on $Q \cap \{ t=z \}$ is $5 (a-1)^2$, so we want
\[
5(a-1)^2  > 5 (a+1)^2 + 5 -  (T-1)^2
\]
which is equivalent to
\[
(T-1)^2 > 20a + 5.
\]
Hence for $T>6$ any $a \in (1,  ( (T-1)^2 - 5)/20 )$ will work. Therefore we can find a $T' \in (1,T)$, $T'$ close to $T$, and 
real $c$ and $\delta>0$ such that 
\vspace{-0.25in}
\begin{itemize}
\item $\psi \leq c - \delta$ on $\Bb \times \{ T' \leq |t| \leq T \}$,
\item $\psi \geq c+\delta$ on $Q \cap \{ t=z \}$.
\end{itemize}

We fix an $a \in (1,  ( (T-1)^2 - 5)/20 )$ and the large enough $\lambda$. Let $\chi(t)$ be a smooth function 
on $\R$ with $\chi =1$ on $|t| \leq T'$ and 
$\chi=0$ on $|t| \geq T$ and define
\[
\beta(x,t) := \chi(t) \, \alpha(x,t), \qquad (x,t) \in Q.
\]
Since $\psi$ is strongly pseudo-convex w.r.t $\Box + q$ near $Q$ and the combined 
Dirichlet and Neumann boundary operators satisfy the strong Lopatinskii condition with respect to $\psi$ and $\Box$ 
(see Definition 1.6 in \cite{Ta96}), that
$\beta \in H^1(Q)$, $(\Box + q) \beta \in L^2(Q)$, $\beta|_\Sigma \in H^1(\Sigma)$,
$\pa_{\nu} \beta|_{\Sigma} \in L^2(\Sigma)$, and $\beta=0$ near 
$Q \cap \{ t= \pm T \}$,
from Theorem 1 in \cite{Ta96} we have (for $\sigma$ large enough depending on $\| q \|_{\infty}$ and $T$)
\beqn
\sigma \|\beta\|_{1,\sigma,Q}^2 \cleq \|(\Box + q) \beta \|_{0,\sigma,Q}^2 + \sigma \| \beta\|_{1,\sigma, \Sigma}^2
+ \sigma \| \pa_\nu \beta \|_{0,\sigma, \Sigma}
\label{eq:betacarl}
\eeqn
with the constant dependent only on $T$ and $\|q\|_\infty$.

Now
\begin{align*}
(\Box + q) \beta & = \chi ( \Box + q) \alpha + \chi'' \alpha + 2 \chi' \alpha_t,
\end{align*}
hence
\[
|(\Box + q) \beta| \cleq |(\Box + q) \alpha| + (|\chi'|+|\chi''|)(|\alpha| + |\alpha_t|)
\]
so
\begin{align}
\|(\Box + q) \beta \|_{0, \sigma, Q}^2 
& \cleq \int_Q e^{2 \sigma \psi} |p|^2 
+ \int_{B \times \{ T' \leq |t| \leq T \}} e^{2 \sigma \psi} (|\alpha|^2 + |\alpha_t|^2)
\nn
\\
& \cleq \int_Q e^{2 \sigma \psi} |p|^2 
+ e^{2 \sigma (c - \delta)} \int_{B \times \{ T' \leq |t| \leq T \}} |\alpha|^2 + |\alpha_t|^2.
\label{eq:tempLb99}
\end{align}
Using Lemma \ref{lemma:neartet} for $\alpha$ on $Q_+$, a corresponding result for $\alpha + \rho$ on $Q_-$, and
Lemma \ref{lemma:pweak} for $\rho$, 
along with (\ref{eq:alphade}), (\ref{eq:alphacc}),
 we have 
\begin{align*}
 \int_{B \times \{ T' \leq |t| \leq T \}} |\alpha|^2 + |\alpha_t|^2
~ & \cleq ~ \| (\Box + q) \alpha \|_{L^2(Q)}^2 + \|\alpha\|_{H^1(\Gamma)}^{2}
+  \|\alpha\|_{H^1(\Sigma_+)}^2 +  \|\pa_\nu \alpha\|_{L^2(\Sigma_+)}^2
\\
& \qquad 
 + \|\alpha + \rho \|_{H^1(\Sigma_-)}^2 +  \|\pa_\nu (\alpha + \rho)\|_{L^2(\Sigma_-)}^2
 + \|\rho \|_{H^1(\Sigma \cap \Gamma)}^2
 \\
 & \cleq \int_B |p|^2 + \|\alpha\|_{H^1(\Gamma)}^2 
 + \|\alpha\|_{H^1(\Sigma)}^2 +  \|\pa_\nu \alpha\|_{L^2(\Sigma)}^2
 + \|\rho \|_{H^1(\Sigma_-)}^2 +  \|\pa_\nu \rho\|_{L^2(\Sigma_-)}^2 
 \\
 & \qquad + \|\rho \|_{H^1(\Sigma \cap \Gamma) }^2
 \\
 & \cleq  \|\alpha\|_{H^1(\Gamma)}^2  
 +  \|\alpha\|_{H^1(\Sigma)}^2 +  \|\pa_\nu \alpha\|_{L^2(\Sigma)}^2 + 
 \|\rho \|_{H^1(\Sigma \cap \Gamma)}^2.
\end{align*}
Using this in (\ref{eq:tempLb99}) and noting $\psi|_\Gamma \geq c+ \delta$,  we obtain
\begin{align*}
\|(\Box + q) \beta \|_{0, \sigma, Q}^2 
& \cleq \int_Q e^{2 \sigma \psi} |p|^2 + e^{2\sigma(c-\delta)} \|\alpha\|_{H^1(\Gamma)}^2
\nn
\\
& \qquad + 
e^{2\sigma (c-\delta)} ( 
  \|\alpha\|_{H^1(\Sigma)}^2 
+  \|\pa_\nu \alpha\|_{L^2(\Sigma)}^2 + \|\rho \|_{H^1(\Sigma \cap \Gamma)}^2 )
\nn
\\
& \cleq 
\int_Q e^{2 \sigma \psi} |p|^2 + e^{-2 \sigma \delta}  \|\alpha\|_{1,\sigma, \Gamma}^2 
+ e^{k \sigma} ( \|\alpha\|_{H^1(\Sigma)}^2 
+  \|\pa_\nu \alpha\|_{L^2(\Sigma)}^2 + \|\rho \|_{H^1(\Sigma \cap \Gamma)}^2 )
\end{align*}
 for some $k>0$. 
Hence from (\ref{eq:betacarl}) and using $\beta = \chi \alpha$ for the terms on $\Sigma$
\beqn
\sigma \|\beta \|_{1,\sigma,Q}^2 \cleq 
\int_Q e^{2 \sigma \psi} |p|^2 + e^{-2 \sigma \delta}  \|\alpha\|_{1,\sigma, \Gamma}^2 
+ e^{k \sigma} \left  ( \|\alpha\|_{H^1(\Sigma)}^2 
+  \|\pa_\nu \alpha\|_{L^2(\Sigma)}^2 + \|\rho \|_{H^1(\Sigma \cap \Gamma)}^2  \right )
\label{eq:betagamma}
\eeqn
 From Lemma \ref{lemma:neartez} applied with $T'$ instead of $T$, 
noting $Q_+ \cap \{t \leq T'\} \subset Q$, $\Sigma_+ \cap \{t \leq T' \} \subset \Sigma_+$, and $\alpha=\beta$ on 
$Q_+ \cap\{t \leq T' \}$,
we have
\begin{align*}
\|\alpha\|_{1,\sigma,\Gamma}^2 
& \cleq \sigma \| \alpha \|_{1,\sigma, Q_{+,T'}}^2 + \| (\Box + q) \alpha \|_{0, \sigma, Q_{+,T'}}^2 + 
\|\alpha\|_{1, \sigma, \Sigma_{+,T'}}^2 + \|\pa_\nu \alpha \|_{0,\sigma, \Sigma_{+,T'}}^2
\nn
\\
& \cleq \sigma \| \beta \|_{1,\sigma, Q_+}^2 + \int_Q e^{2 \sigma \psi} |p|^2 +
 e^{k \sigma} \left (  \|\alpha\|_{H^1(\Sigma)}^2 + \| \pa_\nu \alpha \|_{L^2(\Sigma)}^2 \right ),
\end{align*}
for some $k>0$.
Using this and (\ref{eq:betagamma}) and noting $e^{-2\sigma \delta}$ is small compared to $1$ for large $\sigma$, 
we obtain
\begin{align}
\|\alpha\|_{1,\sigma,\Gamma}^2 
& \cleq 
\int_Q e^{2 \sigma \psi} |p|^2 + 
e^{k \sigma} \left ( \|\rho\|_{H^1(\Sigma \cap \Gamma)}^2 + \|\alpha\|_{H^1(\Sigma)}^2 
+ \|\pa_\nu \alpha \|_{L^2(\Sigma)}^2 \right ).
\label{eq:ptempaa}
\end{align}

From (\ref{eq:alphacc})  we have
\begin{align*}
e^{ \sigma \psi (y,z,z)} p(y,z)  & = e^{ \sigma \psi(y,z,z)} \frac{d}{dz} \int_{-\infty}^z p(y,s) \, ds
=  e^{ \sigma \psi(y,z,z)} \frac{d}{dz} (\alpha(y,z,z)) 
\\
& = 2 e^{ \sigma \psi (y,z,z)} ( \alpha_z + \alpha_t )(y,z,z),
\end{align*}
so (\ref{eq:ptempaa}) implies that for large enough $\sigma$
\begin{align}
\int_\Gamma e^{2 \sigma \psi} |p|^2
& \cleq 
\int_Q e^{2 \sigma \psi} |p|^2 +  e^{k \sigma} \left ( \|\rho\|_{H^1(\Sigma \cap \Gamma)}^2 + \|\alpha\|_{H^1(\Sigma)}^2 
+ \|\pa_\nu \alpha \|_{L^2(\Sigma)}^2 \right ).
\label{eq:ptemp000}
\end{align}
Using the definition of $h(\sigma)$ in Lemma \ref{lemma:integral} we have
\begin{align*}
\int_Q e^{2 \sigma \psi} |p|^2 
& = 
\int_B e^{2\sigma \psi(y,z,z)} |p(y,z)|^2 \int_{-T}^T e^{2 \sigma (\psi(y,z,t) - \psi(y,z,z))} \, dt \, dy \, dz 
\\
& \leq h(\sigma) \int_B e^{2\sigma \psi(y,z,z)} |p(y,z)|^2 \, dy \, dz,
\end{align*}
so (\ref{eq:ptemp000}) implies that
\begin{align*}
\int_B e^{2 \sigma \psi(y,z,z)} |p(y,z)|^2 \, dy \, dz \;
& \cleq \,
 h(\sigma)  \int_B e^{2\sigma \psi(y,z,z)} |p(y,z)|^2 \, dy \, dz  
\\
& \qquad 
+  e^{k \sigma} \left  ( \|\rho\|_{H^1(\Sigma \cap \Gamma)}^2 + \|\alpha\|_{H^1(\Sigma)}^2 
+ \|\pa_\nu \alpha \|_{L^2(\Sigma)}^2 \right ).
\end{align*}
Hence, from Lemma \ref{lemma:integral}, taking $\sigma$ large enough we obtain
\[
\int_B  e^{2 \sigma \psi(y,z,z)}  |p(y,z)|^2 \, dy \, dz 
\cleq
  e^{k \sigma} \left ( \|\rho\|_{H^1(\Sigma \cap \Gamma)}^2 + \|\alpha\|_{H^1(\Sigma)}^2 
+ \|\pa_\nu \alpha \|_{L^2(\Sigma)}^2 \right )
\]
with the constant dependent on $\|q\|_\infty$, $\|f\|_\infty$ and $T$. Fixing a large enough $\sigma$ (which also depends on $\|q\|_\infty$, $\|f\|_\infty$ and $T$) we get 
\[
\|p\|_{L^2(B)}^2 \cleq \|\rho\|_{H^1(\Sigma \cap \Gamma)}^2 + \|\alpha\|_{H^1(\Sigma)}^2
+  \|\pa_\nu \alpha \|_{L^2(\Sigma)}^2
\]
and the proof is complete.

%

\subsection{Proof of Proposition \ref{prop:pwell}}\label{subsec:pwell}
The only part of the proposition not proved in the proof of Theorem 1 in \cite{RU14} is the upper bound on $|u(x,t,\omega)|$. 
We use the notation in the proof of Theorem 1 in \cite{RU14}.  If $q$ is $C^{2N}$ then $(\Box +q)R_N$ is in $C^{N-1}$ and hence in 
$L^2_{loc}(\R, H^{N-1}(\R^n))$, so by Theorems 9.3.1, 9.3.2 and the remark after that in \cite{Ho76}, we conclude that
$R_N \in H^1(\R, H^{N-1}(\R^n))$ locally and, for any given $T$,
\[
\| R_N \|_{H^1((-\infty,T], H^{N-1}(\R^n))} \cleq \| (\Box + q) R_N \|_{L^2((-\infty,T], H^{N-1}(\R^n)) }
\cleq F( \|q\|_{C^{2N}})
\]
for some continuous function $F$, with the constant dependent on $\|q\|_{C^{N-1}}$ and $T$. Hence, if $N-1 >n/2$ then for any 
$t \in (-\infty,T]$ we have
\[
\| R_N(t, \cdot) \|_\infty \cleq \|R_N(t, \cdot) \|_{H^{N-1}(\R^n)} \cleq \| R_N \|_{H^1((-\infty,T], H^{N-1}(\R^n))}
\cleq F(\|q\|_{C^{2N}}),
\]
hence if $N-1>n/2$ then
\[
\|u(t, \cdot, \omega)\|_\infty \cleq F(\|q\|_{C^{2N}}), \qquad \forall t \leq T,
\]
so taking $N=2+\lfloor n/2 \rfloor$ or higher, we see that
\[
\|u(t, \cdot, \omega)\|_\infty \cleq F(\|q\|_{C^{n+4}}), \qquad \forall t \leq T.
\]
%



\section{Lemmas for the two plane wave sources problem}\label{sec:plemma}


We recall the following useful subsets of $\R^n \times \R$, 
\begin{align*}
Q := \Bb \times [-T,T], & \qquad \Sigma := S \times [-T,T], &  \hspace{-0.5in} \Gamma = Q \cap \{t=z\},
\\
Q_{\pm} := Q \cap \{ \pm(t-z) \geq 0 \},  & \qquad \Sigma_{\pm} = \Sigma \cap \{ \pm (t-z) \geq 0 \}, &
\\
Q_{+,\tau} = Q_+ \cap \{ t \leq \tau \}, & \qquad  \Sigma_{+,\tau} = \Sigma_+ \cap \{t \leq \tau \}, &
\end{align*}
for any $\tau \in \R$.

%
%

\subsection{Carleman weight and estimates for the plane waves problem}\label{subsec:pcarl}

\def\Omb{{\bar{\Omega}}}

There are some differences between definitions given in \cite{Ho76} and \cite{Ta96} for pseudoconvexity and
strong pseudo-convexity so we specify the definitions we plan to use. Suppose $P(x,D)$ is a differential operator 
with principal symbol $p(x,\xi)$ with real coefficients, over a region $\Omb$, and $\phi$ a smooth function on
$\Omb$ with $\nabla \phi \neq 0$ at each point of $\Omb$. We say the level surfaces of $\phi$ are pseudoconvex
w.r.t $P(x,D)$ on $\Omb$ if (1.3), (1.4) from \cite{Ta96} hold at every point of $\Omb$. We say the level surfaces
of $\phi$ are strongly pseudoconvex w.r.t $P(x,D)$ on $\Omb$ if (1.3)-(1.6) from \cite{Ta96} hold at every point 
of $\Omb$. We say the function $\phi$ is strongly pseudoconvex w.r.t $P(x,D)$ on $\Omb$ if (1.2) of \cite{Ta96} holds 
at every point of $\Omb$.

For second order operators $P(x,D)$ with real principal part, one may verify that the pseudoconvexity and strong 
pseudoconvexity conditions for level surfaces of $\phi$  are equivalent - see Theorem 1.8 on page 16 in \cite{Ta99}. 
Further, if the level surfaces of $\phi$ are strongly pseudoconvex w.r.t $P(x,D)$ in $\Omb$ then, from 
Theorem 8.6.3 in \cite{Ho76}, for large enough $\lambda$
\[
\psi = e^{\lambda \phi}
\]
is strongly pseudoconvex w.r.t $P(x,D)$ on $\Omb$. So to construct a strongly pseudoconvex weight $\psi$
for a second order operator with real coefficients, one just needs to construct a function $\phi$ whose level surfaces
are pseudoconvex w.r.t $P(x,D)$.

Our goal is to construct on $Q$ a function $\psi(y,z,t)$ strongly pseudoconvex w.r.t $\Box$ and
decreasing in $|t-z|$ for a fixed $y,z$.

%
%
\begin{lemma}\label{lemma:pweight}
Define
\[
\phi(y,z,t) := c (a-z)^2 + d |y|^2- (t-z)^2, \qquad (y,z) \in \R^n, ~ t \in \R
\]
and
\[
\psi(y,z,t) := e^{\lambda \phi(y,z,t)};
\]
then $\psi$ is strongly pseudoconvex w.r.t $\Box$ on the region $z \neq a$ if $c>4$, $d>c-1$, $\lambda > > 0$.
\end{lemma}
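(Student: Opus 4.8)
The plan is to reduce the claim to a single positivity property of a quadratic form and then invoke the two facts recalled just before the lemma: for a second order operator with real principal part the pseudoconvexity and strong pseudoconvexity of the level surfaces of $\phi$ coincide (Theorem 1.8 in \cite{Ta99}), and if those level surfaces are strongly pseudoconvex then $\psi=e^{\lambda\phi}$ is a strongly pseudoconvex \emph{function} for $\lambda$ large (Theorem 8.6.3 in \cite{Ho76}). Thus it suffices to check that the level surfaces of $\phi$ are pseudoconvex with respect to $\Box$ on $\{z\neq a\}$, i.e. the conditions from \cite{Ta96}. Writing the phase-space variables dual to $(y,z,t)$ as $(\eta,\zeta,\tau)$, the principal symbol of $\Box$ is $p(\eta,\zeta,\tau)=|\eta|^2+\zeta^2-\tau^2$, which is independent of position, so the Hamilton field is $H_p=2\eta\cdot\pa_y+2\zeta\pa_z-2\tau\pa_t$. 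I would first record that on $\{z\neq a\}$ one has $\nabla_{y,z,t}\phi\neq0$: if its $t$-component $-2(t-z)$ vanishes then $t=z$, and the $z$-component reduces to $-2c(a-z)\neq0$. This non-vanishing (which fails exactly at $z=a$) is what makes the level surfaces well defined and is the reason the region $z=a$ is excluded.

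Next I would compute the two Poisson brackets. With $w:=a-z$ and $s:=t-z$ one gets $\{p,\phi\}=H_p\phi=4d(\eta\cdot y)-4c\zeta w+4(\zeta+\tau)s$, and applying $H_p$ once more (the $\eta\cdot y$ term produces $8d|\eta|^2$, and differentiating $w,s$ in $z,t$ produces the rest) yields the clean, position-independent expression
\[
\{p,\{p,\phi\}\}=8\,Q(\eta,\zeta,\tau),\qquad Q(\eta,\zeta,\tau):=d|\eta|^2+(c-1)\zeta^2-2\zeta\tau-\tau^2 .
\]
For a real principal symbol the relevant pseudoconvexity requirement is $\{p,\{p,\phi\}\}>0$ on $\{p=0,\ \{p,\phi\}=0,\ \xi\neq0\}$. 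I would in fact establish the stronger statement that $Q>0$ on the whole real null cone $\{p=0\}\setminus\{0\}$; this drops the constraint $\{p,\phi\}=0$ and, pleasantly, makes the condition uniform in the base point, so that the only role of $z\neq a$ is the non-vanishing of $\nabla\phi$ above.

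For the positivity I would substitute $\tau^2=|\eta|^2+\zeta^2$ into $Q$ to obtain $Q|_{p=0}=(d-1)|\eta|^2+(c-2)\zeta^2-2\zeta\tau$. Only the sign choice with $\zeta\tau>0$ is dangerous, so I may take $\tau=\sqrt{|\eta|^2+\zeta^2}$ and $\zeta\ge0$; writing $|\eta|=m\cos\beta$, $\zeta=m\sin\beta$ with $\beta\in[0,\pi/2]$ reduces positivity to
\[
h(x):=(c-d-1)x^2-2x+(d-1)>0,\qquad x=\sin\beta\in[0,1].
\]
Here the hypothesis $d>c-1$ makes the leading coefficient $c-d-1$ negative, so $h$ is concave and attains its minimum on $[0,1]$ at an endpoint; since $h(0)=d-1>0$ and $h(1)=c-4>0$ under $c>4$ and $d>c-1$ (which forces $d>3$), the minimum is positive. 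This gives $Q>0$ on $\{p=0\}\setminus\{0\}$, hence the level surfaces of $\phi$ are pseudoconvex, and the two cited theorems then upgrade this to strong pseudoconvexity of $\psi=e^{\lambda\phi}$ for $\lambda\gg0$.

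The main obstacle I anticipate is not the bracket computation, which is mechanical, but the positivity bookkeeping that isolates exactly the two hypotheses: the characteristic direction $\eta=0,\ \tau=\zeta$ forces $c>4$ (there $Q=(c-4)\zeta^2$), while $d>c-1$ is precisely the condition turning the one-variable reduction $h$ concave, so its minimum sits at the endpoints rather than at an interior vertex where it could become negative. A secondary point requiring care is definitional: one must confirm that the real-bicharacteristic inequality $\{p,\{p,\phi\}\}>0$, together with $\nabla\phi\neq0$, is the full content of pseudoconvexity of the level surfaces in this real second-order setting, so that no separate condition on the complex characteristics (the $\sigma>0$ terms of $p(x,\xi+i\sigma\phi_x)$) need be checked before passing to $e^{\lambda\phi}$; this is exactly what the equivalence in \cite{Ta99} and Theorem 8.6.3 in \cite{Ho76} supply.
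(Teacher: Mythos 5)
Your proposal is correct and follows essentially the same route as the paper: both reduce to pseudoconvexity of the level surfaces of $\phi$ via the equivalence in \cite{Ta99} and Theorem 8.6.3 in \cite{Ho76}, verify $\nabla\phi\neq 0$ on $\{z\neq a\}$, arrive at the identical quadratic form $d|\eta|^2+(c-1)\zeta^2-2\zeta\tau-\tau^2$ (your double Poisson bracket coincides with the first term of H\"ormander's condition (8.4.5), the only surviving term since $p$ has constant coefficients), drop the constraint $\{p,\phi\}=0$, and check positivity on the null cone by reducing to a concave one-variable quadratic whose endpoint values force $c>4$ and $d>c-1$. The only (minor) improvement is your gradient argument, which correctly passes through $\phi_t=0\Rightarrow t=z\Rightarrow\phi_z=c(z-a)\neq 0$ rather than asserting nonvanishing of $\phi_z$ alone.
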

%
\begin{proof}
As explained at the beginning of subsection \ref{subsec:pcarl}, it is enough to prove that the level surfaces 
of $\phi$ are pseudoconvex on $z \neq a$ if $c>4$, $d>c-1$, and also that $\nabla_{y,z,t} \phi$ is non-zero at 
each point in the region $z \neq a$.

For convenience we take 
\[
\phi(y,z,t) = \frac{1}{2} (  c (a-z)^2 + d |y|^2- (t-z)^2 )
\]
and the principal symbol of $\Box$ to be
\[
p(y,z,t,\eta,\zeta,\tau) = \frac{1}{2} ( -\tau^2 + |\eta|^2 + \zeta^2), \qquad (\eta,\zeta,\tau) \in \R^{n-1} \times \R \times \R.
\]
We first note that $\nabla \phi $ is non-zero at every point in the region $z \neq a$ because 
$\phi_z = c (z  -a) \neq 0$ on the region $z \neq a$. Next, all $y,z,t$ derivatives of $p$ are zero, the mixed partials of $p$ and $\phi$ are zero (except for $\phi_{zt} = 1$), and
\begin{gather*}
p_\tau = - \tau, ~ \nabla_\eta p=  \eta, ~ p_\zeta =  \zeta,
\\ 
\phi_t = -(t-z), ~ \nabla_y \phi = d y, ~ \phi_z = c(z-a) -  (z-t),
\\
\phi_{tt} = -1,  ~~ (\phi_{y_i, y_j}) = d I_{n-1}, ~~\phi_{zz} =  c - 1, ~~ \phi_{zt}=1.
\end{gather*}
The condition (1.3) in \cite{Ta96}, in expanded form, is condition (8.4.5) in \cite{Ho76}, so
the level surfaces of $\phi$ are pseudoconvex w.r.t $\Box$ 
iff 
\begin{align*}
-(- \tau)^2 + d  \eta^T \eta+ (c-1) \zeta^2  + 2 \zeta (-\tau)>0
\end{align*}
whenever $(\eta, \zeta, \tau) \neq 0$ and
\[
\tau^2 = |\eta|^2 + \zeta^2, \qquad (-\tau) (-(t-z)) + \eta \cdot (dy) + \zeta \cdot (c(z-a) - (z-t)) =0.
\]
It will be enough to require that
\beqn
-\tau^2 + d |\eta|^2 + (c-1) \zeta^2 - 2\zeta \tau >0
\label{eq:carltemp}
\eeqn
whenever 
$(\eta, \zeta, \tau) \neq 0$ and $\tau^2 = |\eta|^2 + \zeta^2$.
Because of homogeneity, we can take $\tau = \pm1$ and $|\eta|^2 + \zeta^2 =1$, so it would be enough to require that
\[
f(\zeta) := -1 + d(1-\zeta^2) +(c-1) \zeta^2  \pm 2\zeta >0 \qquad \text{whenever} ~ \zeta \in [-1,1].
\]
Now 
\[
f(\zeta) = (c-d-1)\zeta^2 \pm 2\zeta + d-1
\]
is a downward opening parabola when $c-d-1<0$ so its minimum on $[-1,1]$ will be at the end points. Hence
the minimum of $f(\zeta)$ on $[-1,1]$ will be  $c-4$. So it will be enough to require that $c>4$ and $c<d+1$, that is $c>4$ and $d>c-1$.
 \end{proof}

 
 Next, we compute the limit of an integral associated with the Carleman weight we use in the proof of
 Proposition \ref{prop:planeprop}.
 \begin{lemma}\label{lemma:integral}
 If $\phi(y,z,t) = c (a-z)^2 + d |y|^2 - (t-z)^2$ with $a>1$, $c>0, d>0$, $\psi = e^{\lambda \phi}$ for some $\lambda>0$
 and
\[
h(\sigma) :=  \sup_{ (y,z) \in \Bb} \int_{-T}^T e^{2\sigma ( \psi (y,z,t) - \psi(y,z,z)) }\, dt
\]
for some $T>0$, then $\lim_{\sigma \to \infty} h(\sigma) =0.$
 \end{lemma}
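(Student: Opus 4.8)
The plan is to show pointwise that the integrand decays, then dominate the integral. Fix $(y,z) \in \Bb$ and examine the exponent $\psi(y,z,t) - \psi(y,z,z)$ as a function of $t \in [-T,T]$. Since $\phi(y,z,t) = c(a-z)^2 + d|y|^2 - (t-z)^2$ and $\psi = e^{\lambda \phi}$, the function $t \mapsto \phi(y,z,t)$ attains its unique maximum at $t=z$, where the term $-(t-z)^2$ vanishes; away from $t=z$ it strictly decreases. Because $\psi$ is a strictly increasing function of $\phi$, the same holds for $t \mapsto \psi(y,z,t)$: it is maximized at $t=z$, so $\psi(y,z,t) - \psi(y,z,z) \leq 0$ for all $t$, with equality only at $t=z$. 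Thus the integrand $e^{2\sigma(\psi(y,z,t)-\psi(y,z,z))}$ is bounded by $1$ and converges pointwise to $0$ for $t \neq z$ as $\sigma \to \infty$.

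First I would make this decay quantitative and uniform in $(y,z)$, since the supremum over $\Bb$ is what must be controlled. The idea is to bound $\psi(y,z,z) - \psi(y,z,t)$ from below by a positive quantity depending on $|t-z|$, uniformly over the compact parameter set $(y,z) \in \Bb$. Writing $\phi(y,z,z) - \phi(y,z,t) = (t-z)^2$ and using the mean value theorem for the exponential, we get $\psi(y,z,z) - \psi(y,z,t) = e^{\lambda \xi}\bigl(\phi(y,z,z) - \phi(y,z,t)\bigr) = \lambda e^{\lambda \xi}(t-z)^2$ for some $\xi$ between $\phi(y,z,t)$ and $\phi(y,z,z)$. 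On the compact set $(y,z) \in \Bb$, $t \in [-T,T]$ the values of $\phi$ range over a compact interval bounded below by some $\phi_{\min}$, so $e^{\lambda \xi} \geq e^{\lambda \phi_{\min}} =: c_0 > 0$, giving the uniform lower bound $\psi(y,z,z) - \psi(y,z,t) \geq \lambda c_0 (t-z)^2$.

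With this bound in hand I would estimate the integral directly. For each $(y,z) \in \Bb$,
\[
\int_{-T}^T e^{2\sigma(\psi(y,z,t) - \psi(y,z,z))} \, dt
\leq \int_{-T}^T e^{-2\sigma \lambda c_0 (t-z)^2} \, dt
\leq \int_{-\infty}^\infty e^{-2\sigma \lambda c_0 s^2} \, ds
= \sqrt{\frac{\pi}{2\sigma \lambda c_0}},
\]
where the right-hand side is independent of $(y,z)$. Taking the supremum over $\Bb$ yields $h(\sigma) \leq \sqrt{\pi / (2\sigma \lambda c_0)}$, which tends to $0$ as $\sigma \to \infty$, proving the claim.

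The only real subtlety is ensuring the constant $c_0$ in the lower bound is genuinely uniform over the parameter region $\Bb$; this is where one must use compactness of $\Bb$ (and of $[-T,T]$) so that $\phi$ stays bounded below and the factor $e^{\lambda \xi}$ cannot degenerate to zero. Everything else is a routine Gaussian integral estimate. Note that the hypothesis $a>1$ guarantees $z \neq a$ on $\Bb$, consistent with Lemma~\ref{lemma:pweight}, though the monotonicity argument used here does not actually require it.
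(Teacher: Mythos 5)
Your proof is correct, and it differs from the paper's in its technical execution, so a brief comparison is worthwhile. Both arguments hinge on the same reduction: bound $\psi(y,z,z)-\psi(y,z,t)$ from below, uniformly in $(y,z)\in\Bb$, by a positive function of $t-z$ alone, and then show the resulting $(y,z)$-independent integral vanishes as $\sigma\to\infty$. The paper does this with the factorization $\psi(y,z,z)-\psi(y,z,t)=e^{\lambda\phi(y,z,z)}\bigl(1-e^{-\lambda(t-z)^2}\bigr)$, discards the prefactor using $\phi(y,z,z)=c(a-z)^2+d|y|^2\geq 0$ (so the prefactor is $\geq 1$), applies the elementary inequality $1-e^{-s}\geq\min(1/2,s/2)$ to get the integrand bound $e^{-\sigma\min(1,\lambda(t-z)^2)}$, and concludes by shifting variables, enlarging the interval to $[-T-1,T+1]$, and invoking dominated convergence; this yields $h(\sigma)\to 0$ but no rate. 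You instead apply the mean value theorem to $u\mapsto e^{\lambda u}$ to get $\psi(y,z,z)-\psi(y,z,t)=\lambda e^{\lambda\xi}(t-z)^2$, bound $e^{\lambda\xi}\geq c_0>0$ by compactness, and evaluate a genuine Gaussian integral, obtaining the quantitative bound $h(\sigma)\cleq\sigma^{-1/2}$. Your route buys an explicit decay rate (more than the paper needs, since the proof of Proposition \ref{prop:planeprop} only requires $h(\sigma)$ to be eventually smaller than a fixed constant), at the cost of the MVT/compactness step; the paper's route is marginally more self-contained, using only the sign of $\phi$ on $\{t=z\}$ and a one-line inequality. One small point worth making explicit in your write-up: the intermediate value $\xi$ lies between $\phi(y,z,t)$ and $\phi(y,z,z)$, and since $\phi(y,z,z)\geq\phi(y,z,t)$ you have $\xi\geq\phi(y,z,t)\geq\phi_{\min}$ where $\phi_{\min}$ is the minimum of $\phi$ over $\Bb\times[-T,T]$; this settles the uniformity cleanly even when $T<1$, in which case the point $(y,z,z)$ need not lie in $\Bb\times[-T,T]$ (alternatively, the crude bound $\phi(y,z,t)\geq -(T+1)^2$ gives an explicit admissible $c_0=e^{-\lambda(T+1)^2}$ with no compactness argument at all).
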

 
 
 \begin{proof}
Since $\lambda >0$ and $\phi(y,z,z) \geq 0$, for any $ (y,z) \in \Bb$ we have
\begin{align*}
\psi(y,z,z) - \psi(y,z,t)   & = e^{ \lambda \phi(y,z,z)} - e^{\lambda \phi(y,z,t)}
 =  e^{\lambda \phi(y,z,z)} (1 -  e^{ - \lambda (\phi(y,z,z) -  \phi(y,z,t) ) } )
\\
& = e^{\lambda \phi(y,z,z)} ( 1- e^{ - \lambda (t-z)^2})
 \geq 1- e^{ - \lambda (t-z)^2}.
\end{align*}
Now, for $s \geq 0$, 
\[
1 - e^{-s} \geq \min( 1/2, s/2),
\]
hence
\[
2(\psi(y,z,t) - \psi(y,z,z)) \leq - \min(1, \lambda(t-z)^2 )
\]
so
\begin{align*}
\int_{-T}^T e^{2 \sigma (\psi(y,z,t)-\psi(y,z,z)} \, dt
& \leq \int_{-T}^T e^{ - \sigma \min( 1, \lambda (t-z)^2) } \, dt
 = \int_{-T-z}^{T-z} e^{ - \sigma \min( 1, \lambda t^2) } \, dt
\\
& \leq \int_{-T-1}^{T+1} e^{ - \sigma \min( 1, \lambda t^2) } \, dt.
\end{align*}
Hence, by the dominated convergence theorem
\[
0 \leq  \lim_{\sigma \to \infty} h(\sigma) \leq 
\lim_{\sigma \to \infty}  \int_{-T-1}^{T+1} e^{ - \sigma \min( 1, \lambda t^2) } \, dt =0.
\]
\end{proof}

%
%

\subsection{Energy estimates for the plane wave problem}

We derive three energy estimates needed in the proof of Theorem \ref{thm:twop1}. The first is an estimate 
for an exterior
problem, the second estimates the energy on $t=\pm T$ and the third estimates the energy on $t=z$. In deriving these estimates, we 
will use the following simple integration by parts results on $Q_+$ and other sets having a similar form: if $v$ is smooth in $Q_+$, 
then 
\[
\int_{Q_+} \pa_t v \,dx \,dt = \int_{\{t=T\}} v \,dS - \frac{1}{\sqrt{2}} \int_{\{t=z\}} v \,dS,
\]
and if $V$ is a smooth vector field on $Q_+$ with values in $\R^n$, then (with $\nabla$ denoting the gradient in $x$ variables) 
\[
\int_{Q_+} \nabla \cdot V \,dx \,dt = \int_{\Sigma_+} V \cdot \nu \,dS + \frac{1}{\sqrt{2}} \int_{\{t=z\}} V \cdot e_n \,dS.
\]

\begin{lemma}[Energy estimate for exterior problem]\label{lemma:pexterior}
Suppose $n>1$,  $T>1$, $p(x)$a smooth function on $\R^n$ with support in $\Bb$ and
$\alpha(y,z,t)$ a smooth function on $(\R^n \times \R) \cap \{ t \geq z \}$ 
with
\begin{align}
\Box \alpha =0, & \qquad \text{on} ~ (y,z,t) \in \R^n \times \R, ~ |(y,z)| \geq 1, ~ t \geq z,
\label{eq:poutde}
\\
\alpha(y,z,z) = \int_{-\infty}^0 p(y,z+s) \, ds, & \qquad \text{on}~ |(y,z)| \geq 1,
\label{eq:poutcc}
\\
\alpha(y,z,t) = 0, & \qquad \text{on} ~ \{ (y,z,t) : z \leq t < -1 \};
\label{eq:poutic}
\end{align}
then
\[
\| \pa_\nu \alpha \|_{L^2(\Sigma_+)} \cleq \| \alpha \|_{H^1(\Sigma_+)} + \|\alpha\|_{H^1(\Sigma \cap \Gamma)},
\]
with the constant dependent only on $T$.

\end{lemma}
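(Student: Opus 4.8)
The plan is to prove this hidden-regularity bound by the energy (multiplier) method, exactly as the two integration-by-parts formulas stated just before the lemma suggest. First I would use finite speed of propagation to replace the unbounded exterior by a bounded region. Since $p$ is supported in $\Bb$ and $\alpha$ vanishes for $z \leq t < -1$, the characteristic data $g(y,z) := \int_{-\infty}^0 p(y,z+s)\,ds$ on $\{t=z\}$ equals $\bar\rho(y):=\int_{\R} p(y,s)\,ds$ above $\Bb$ and is zero below it, and a cone argument shows that for $t \leq T$ the solution $\alpha$ vanishes for $|(y,z)| > T+2$. Hence it suffices to work on the bounded region $R := \{(y,z,t): 1 \leq |(y,z)| \leq T+2,\ z \leq t \leq T\}$, whose boundary consists of the lateral cylinder $\Sigma_+$ (with outward normal $-\nu$), the characteristic face $\{t=z\}$, the top $\{t=T\}$, and an outer piece on which $\alpha \equiv 0$.

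The core is a pair of divergence identities on $R$, both exploiting $\Box\alpha=0$. Multiplying $\Box\alpha=0$ by the radial multiplier $\pa_r\alpha$, which is transversal to the timelike cylinder $\Sigma_+$, and integrating by parts produces a lateral contribution $\int_{\Sigma_+}\big[(\pa_\nu\alpha)^2 + \alpha_t^2 - |\nabla_{\mathrm{tan}}\alpha|^2\big]$, in which $(\pa_\nu\alpha)^2$ appears with a favorable sign and can therefore be isolated; the remaining contributions are a bulk term controlled by $\int_R|\nabla_{x,t}\alpha|^2$, a term on $\{t=T\}$ bounded by the energy $E(T):=\int_{\{t=T\}}(\alpha_t^2+|\nabla\alpha|^2)$, and a flux through the characteristic face. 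A useful simplification is that on $\{t=z\}$ the transversal derivative $\alpha_t-\alpha_z$ cancels identically in this flux (this is special to $t=z$ being characteristic for $\Box$), so the flux involves only the tangential derivatives of $\alpha|_{\{t=z\}}=g$. To control the bulk term and $E(T)$ I would run the standard energy identity with multiplier $\alpha_t$: integrated up to a time $\tau$ it bounds $E(\tau)$, and hence $\int_R|\nabla\alpha|^2$, again by the tangential characteristic data together with the lateral cross term $\int_{\Sigma_+}\alpha_t\,\pa_\nu\alpha$.

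Assembling the two identities gives $\int_{\Sigma_+}(\pa_\nu\alpha)^2 \lesssim \|\alpha\|_{H^1(\Sigma_+)}^2 + (\text{characteristic flux}) + \int_{\Sigma_+}|\alpha_t\,\pa_\nu\alpha|$, and the cross term is removed by Young's inequality, absorbing a small multiple of $\int_{\Sigma_+}(\pa_\nu\alpha)^2$ into the left-hand side. The main obstacle is the characteristic flux. Because the data there is $g$, the radial integral of $p$, its tangential derivatives involve $\nabla_y\bar\rho$, and one must bound this, with constant depending only on $T$, by $\|\alpha\|_{H^1(\Sigma\cap\Gamma)}$ together with $\|\alpha\|_{H^1(\Sigma_+)}$. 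This uses that $g$ coincides with the given trace of $\alpha$ on the corner $\Sigma\cap\Gamma$ (namely $\alpha=\bar\rho(y)$ on the upper hemisphere and $\alpha=0$ on the lower) and that the $z$-extent of the face is bounded by $T$. The delicate point is the degeneration of the surface measure and of the tangential gradient near the equator $\Sigma\cap\Gamma$, which must be controlled using the smoothness of $\bar\rho$ (it lies in $C_c^\infty(\{|y|\le1\})$ and vanishes at $|y|=1$) and the companion lateral norm. I expect this corner analysis, rather than the multiplier computations themselves, to be the crux of the argument.
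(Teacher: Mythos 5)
Your plan follows the paper's proof step for step: the finite-speed reduction to a bounded region (the paper's $H_T$), the pairing of the energy multiplier $\alpha_t$ with a radial multiplier (the paper uses $x\cdot\nabla\alpha$, which on the cylinder $|x|=1$ agrees with your $\pa_r\alpha$), the observation that the flux through the characteristic face $\{t=z\}$ degenerates so that only tangential data survives --- in the paper this uses both the characteristic geometry and the identity $(\pa_t+\pa_z)\alpha(y,z,z)=\tfrac{d}{dz}\int_{-\infty}^z p(y,s)\,ds=p(y,z)=0$ for $|(y,z)|\geq 1$ --- and finally the absorption of the cross term $\int_{\Sigma_+}|\alpha_t\,\pa_\nu\alpha|$ by Young's inequality. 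Up to this point there is nothing to object to; it is the paper's argument.

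However, the step you explicitly defer is a genuine gap, and it is exactly where the content of the lemma lies. After the multiplier identities one still needs
\begin{equation*}
\int_{\{t=z\}\cap H_T}|\nabla_y\bar{\rho}(y)|^2\,dy\,dz \;\cleq\; \|\alpha\|_{H^1(\Sigma\cap\Gamma)}^2,
\end{equation*}
with a constant depending only on $T$ and uniform over all admissible $p$ (the lemma permits no dependence on $p$). Your proposal restates this requirement (``must be controlled using the smoothness of $\bar\rho$ and the companion lateral norm'') but supplies no mechanism, and the difficulty is real: on the corner sphere the tangential derivatives of the trace $\bar{\rho}(y)H(z)$ are only $z\nabla_y\bar{\rho}$ and the angular fields $y_i\pa_{y_j}-y_j\pa_{y_i}$, measured against the spherical measure $dS=dy/\sqrt{1-|y|^2}$, so relative to the planar quantity $\int|\nabla_y\bar{\rho}|^2\,dy$ a full power of $z=\sqrt{1-|y|^2}$ is missing near the equator. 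The paper's device for recovering it is a chain-rule identity: writing $\bar\alpha(y,z)=\int_{-\infty}^z p(y,s)\,ds$, and using that $\pa_z\bar\alpha=p$ vanishes on $S$, the face data is the hemisphere trace $\bar{\rho}(y)=\bar\alpha(y,\sqrt{1-|y|^2})$, and
\begin{equation*}
\nabla_y\bar{\rho}(y)=\frac{1}{\sqrt{1-|y|^2}}\Bigl[\bigl(z\nabla_y-y\,\pa_z\bigr)\bar\alpha\Bigr]\bigl(y,\sqrt{1-|y|^2}\bigr),
\end{equation*}
which expresses $\nabla_y\bar{\rho}$ through vector fields genuinely tangential to $S$, with the singular prefactor to be played off against the Jacobian of $dS$; this is what converts the planar integral into $\|\bar\alpha\|_{H^1(S)}^2\cleq\|\alpha\|_{H^1(\Sigma\cap\Gamma)}^2$. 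Without this identity, or some substitute for it, your argument does not close. It is worth adding that even in the paper the weight bookkeeping at this point is the most fragile step of the whole proof, so your instinct that the corner analysis is the crux was correct --- but an instinct is not an estimate.
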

%
%
\begin{proof}
The result follows from  standard estimates for the wave operator obtained using multiplier methods.
Define
\[
H_T = \{ (y,z,t): |(y,z)| \geq 1, ~ -T \leq t \leq T, ~ z \leq t \};
\]
then from domain of dependence arguments and (\ref{eq:poutcc}), (\ref{eq:poutic}), we can show that the
intersection of the support of $\alpha$ and $H_T$ is bounded and hence, on this set, $|x|$ is bounded above by a
constant dependent on $T$. 

\def\alphab{{\bar{\alpha}}}
We define the smooth function
\[
\alphab(y,z) := \alpha(y,z,z) =  \int_{-\infty}^0 p(y, z +s) \, ds = \int_{-\infty}^z p(y,s) \, ds,
\qquad (y,z) \in \R^n,
\]
and noting that $p$ is supported in $\Bb$, for $|(y,z)| \geq 1$ we have
\[
\alphab(y,z) = \alpha(y,z,z) =  \begin{cases} 
0, & \text{if} ~ |y| \geq 1,
\\
0, & \text{if} ~ |y| \leq 1 ~ \text{and} ~ z \leq 0,
\\
\int_\R p(y,s) \, ds, & |y| \leq 1 ~ \text{and} ~ z \geq 0,
\end{cases}
\] 

We have the identities
\begin{align*}
2 \alpha_t \Box \alpha &= ( \alpha_t^2 + |\nabla \alpha|^2)_t - 2 \nabla \cdot ( \alpha_t \nabla \alpha)
\\
2 (x \cdot \nabla \alpha) \, \Box \alpha & = 2 ( \alpha_t (x \cdot \nabla \alpha) )_t 
+ \nabla \cdot \left ( x ( |\nabla \alpha|^2 - \alpha_t^2) - 2 (x \cdot \nabla \alpha) \nabla \alpha \right )
+ n \alpha_t^2 - (n-2) |\nabla \alpha|^2.
\end{align*}
For any $\tau \in [-T,T]$, integrating the first identity over the region $H_T \cap \{ t \leq \tau \}$ and noting 
that $\alpha$ is compactly supported for
each fixed $t$, $\alpha(y,z,z) = \alphab(y,z)$ and $(\alpha_t + \alpha_z)(y,z,z)=0$ on $ \{t=z\} \cap H_T$,
we have
\begin{align}
\int_{H_T \cap \{t=\tau \}} \alpha_t^2  &+ |\nabla \alpha |^2 \, dx
 = - 2 \int_{\Sigma_+ \cap \{t \leq \tau\}} \alpha_r \, \alpha_t \, dS + 
\int_{H_T \cap \{t=z\}} (\alpha_t^2 + |\nabla \alpha|^2 + 2 \alpha_t \alpha_z)(y,z,z) \, dy \, dz
\nonumber
\\
& = - 2 \int_{\Sigma_+ \cap \{t \leq \tau\}} \alpha_r \, \alpha_t \, dS + 
\int_{H_T \cap \{t=z\}} |\nabla_y \alpha(y,z,z)|^2  + (\alpha_t + \alpha_z)(y,z,z)^2 \, dy \, dz
\nonumber
\\
& \leq  \ep \int_{\Sigma_+} \alpha_r^2 \, dS+ \frac{1}{\ep} \int_{\Sigma_+} \alpha_t^2 \, dS
+ \int_{H_T \cap \{t=z\}} |\nabla_y \alphab(y,z)|^2  \, dy \, dz
\label{eq:ep1}
\end{align}
for all $\ep>0$.
Integrating the second relation over $H_T$  we obtain
\begin{align}
\int_{\Sigma_+} 2 \alpha_r^2 +\alpha_t^2 - |\nabla \alpha|^2 \, dS
&= \int_{H_T} ( (n-2) |\nabla \alpha|^2 - n \alpha_t^2 ) \, dx \, dt - 2 \int_{|x| \geq 1, t=T} \alpha_t \, (x \cdot \nabla \alpha) \, dx
\nonumber
\\
& \qquad + \int_{H_T \cap \{t=z\}} 2 (\alpha_t + \alpha_z) (x \cdot \nabla \alpha) - z |\nabla_y \alpha|^2 
+ z (\alpha_t ^2 - \alpha_z^2)
\nonumber
\\
& =  \int_{H_T} ( (n-2) |\nabla \alpha|^2 - n \alpha_t^2 ) \, dx \, dt - 2 \int_{|x| \geq 1, t=T} \alpha_t \, (x \cdot \nabla \alpha) \, dx
\nn
\\
& \qquad - \int_{H_T \cap \{t=z\}} z |\nabla_y \alphab(y,z)|^2 \, dy \, dz.
\label{eq:alphaside}
\end{align}
Hence using (\ref{eq:fangular}) and (\ref{eq:ep1}) we obtain from (\ref{eq:alphaside}) that
\begin{align*}
\int_{\Sigma_+} \alpha_r^2  \, dS
& \cleq \int_{\Sigma_+} \sum_{i<j} (\Omega_{ij} \alpha)^2 \, dS 
+ 
 \int_{H_T} \alpha_t^2 + |\nabla \alpha|^2 \, dx \, dt
+ \int_{H_T \cap \{t=T\}} \alpha_t^2 + |\nabla \alpha|^2 \, dx
\\
& \qquad + \int_{H_T \cap \{t=z\}}  |\nabla_y \alphab(y,z)|^2 \, dy \, dz
\\
& = \int_{\Sigma_+}  \sum_{i<j} (\Omega_{ij} \alpha)^2 \, dS  + \int_{-T}^T \int_{H_T \cap \{ t= \tau \}} \alpha_t^2 + |\nabla \alpha|^2 \, dx \, d \tau + \int_{H_T \cap \{t=T\}} \alpha_t^2 + |\nabla \alpha|^2 \, dx
\\
& \qquad + \int_{H_T \cap \{t=z\}}  |\nabla_y \alphab(y,z)|^2 \, dy \, dz
\\
& \leq \left (1 + \frac{2 T}{\ep} \right  )  \int_{\Sigma_+} \alpha_t^2 + \sum_{i<j} (\Omega_{ij} \alpha)^2 \, dS 
+ (2T+1)\ep \int_{\Sigma_+} \alpha_r^2 \, dS
\\
& \qquad + \int_{H_T \cap \{t=z\}}  |\nabla_y \alphab(y,z)|^2 \, dy \, dz
\end{align*}
So, choosing $\ep$ small enough, we obtain
\[
\int_{\Sigma_+} \alpha_r^2  \, dS
\cleq \| \alpha \|_{H^1(\Sigma_+)}^2 + \int_{H_T \cap \{t=z\}}  |\nabla_y \alphab(y,z)|^2 \, dy \, dz.
\]

Now, $\alphab(y,z)=0$ for $|y| \geq 1$ and when $z \leq 0$. Further,
\[
\alphab(y,z) = \alphab(y, \sqrt{1-|y|^2}), \qquad \text{when} ~ z \geq 0, ~ |y| \leq 1
\]
so
\begin{align*}
\int_{H_T \cap \{t=z\}}  |\nabla_y \alphab(y,z)|^2 \, dy \, dz ~
& 
\cleq
\int_{|y| \leq 1} |\nabla_y ( \alphab(y, \sqrt{1-|y|^2}) |^2 \, dy
\\
& =\int_{|y| \leq 1}  | (\nabla_y \alphab - y/\sqrt{1-|y|^2} \alphab_z) (y, \sqrt{1-|y|^2}) |^2 \, dy
\\
& = \int_{|y| \leq 1} \frac{1}{\sqrt{1-|y|^2}} | ( ( \sqrt{1-|y|^2} \nabla_y - y \pa_z) \alphab )(y, \sqrt{1-|y|^2})|^2
\, dy
\\
& \cleq \int_S | ( (z \nabla_y - y \pa_z) \alphab )(y,z)|^2 \, dS_{y,z}
\\
& \cleq \|\alphab\|_{H^1(S)}^2
\\
& \cleq  \|\alpha\|_{H^1(\Sigma \cap \{t=z\})}^2
\end{align*}
and the proof is complete.
\end{proof}


Next we estimate the energy near $t= T$ by the energy on $t=z$.
\begin{lemma}[Energy estimate near $t= T$]\label{lemma:neartet}
If $1<\tau\leq T$, $q$ is a smooth function on $\Bb$ and $\alpha(x,t)$ is a smooth function on $Q_+$ then
\begin{align*}
\int_B  (|\nabla_{x,t}  & \alpha|^2 + |\alpha|^2)(x, \tau) \, dx
  \cleq \|\alpha\|_{H^1(\Gamma)}^2 + \|(\Box +q) \alpha\|_{L^2(Q_+)}^2 + \|\alpha\|_{H^1(\Sigma_+)}^2
+ \| \pa_\nu \alpha \|_{L^2( \Sigma_+)}^2,
 \end{align*}
 with the constant dependent only on $\|q\|_\infty$ and $T$.
\end{lemma}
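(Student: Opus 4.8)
The plan is to prove this standard energy estimate by the multiplier method, using the multiplier $2\alpha_t$ and exploiting the fact that on the characteristic boundary $\Gamma = \{t=z\}$ the energy flux involves only derivatives tangential to $\Gamma$. Writing $F := (\Box + q)\alpha$ and using $\Box \alpha = F - q\alpha$, one has the pointwise identity $\pa_t(\alpha_t^2 + |\nabla \alpha|^2) = 2\nabla\cdot(\alpha_t \nabla \alpha) + 2\alpha_t F - 2q\alpha\alpha_t$. I would integrate this over $Q_{+,\tau} = Q_+ \cap \{t \leq \tau\}$ and apply the two integration-by-parts formulas stated just before Lemma \ref{lemma:pexterior} (with the flat top face now at $t=\tau$ rather than $t=T$). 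The boundary of $Q_{+,\tau}$ consists of the top $\Bb \times \{\tau\}$ (which is the full disk, since $\tau > 1 \geq z$ for $x \in \Bb$), the lateral part $\Sigma_{+,\tau}$, and the characteristic face $\Gamma \cap \{t \leq \tau\}$.

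The crucial point is the contribution of the characteristic face. Collecting the $\Gamma$-terms coming from $\pa_t(\cdots)$ and from $2\nabla\cdot(\alpha_t\nabla\alpha)$ produces $\tfrac{1}{\sqrt{2}}\big((\alpha_t+\alpha_z)^2 + |\nabla_y \alpha|^2\big)$ -- exactly the combination already seen in \eqref{eq:ep1} -- which involves only derivatives tangential to $\Gamma$ and is therefore bounded by $\|\alpha\|_{H^1(\Gamma)}^2$. This is essential: the transversal derivative of $\alpha$ on $\Gamma$ is not available in the data, but the favorable algebra on the characteristic surface means it never appears. The lateral term $2\int_{\Sigma_{+,\tau}} \alpha_t\,\pa_\nu\alpha\, dS$ is controlled by Young's inequality by $\|\alpha\|_{H^1(\Sigma_+)}^2 + \|\pa_\nu \alpha\|_{L^2(\Sigma_+)}^2$ (since $\alpha_t$ is tangential to $\Sigma$), the source term $\int 2\alpha_t F$ by $\|(\Box+q)\alpha\|_{L^2(Q_+)}^2 + \int_{Q_{+,\tau}} \alpha_t^2$, and the potential term $\int 2q\alpha\alpha_t$ by $\|q\|_\infty \int_{Q_{+,\tau}}(\alpha^2 + \alpha_t^2)$. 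This yields a bound for $\int_B (\alpha_t^2 + |\nabla\alpha|^2)(x,\tau)\, dx$ by the four data terms on the right of the lemma plus the volume term $\int_{Q_{+,\tau}}(\alpha^2 + \alpha_t^2 + |\nabla\alpha|^2)$.

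To obtain $\int_B \alpha^2(x,\tau)\, dx$ as well, I would use the fundamental theorem of calculus along $t$: since $\alpha(y,z,\tau) = \alpha(y,z,z) + \int_z^\tau \alpha_t(y,z,s)\, ds$, one gets $\int_B \alpha^2(x,\tau)\, dx \cleq \|\alpha\|_{L^2(\Gamma)}^2 + \int_{Q_+} \alpha_t^2$, using $\tau \leq T$ and the graph representation of $\Gamma$. Setting $E(s) := \int_{\{t=s\}\cap Q_+}(\alpha_t^2 + |\nabla\alpha|^2 + \alpha^2)\, dx$ and rewriting the volume term as $\int_{-1}^\tau E(s)\, ds$ by Fubini, the combined inequality takes the form $E(\tau) \cleq D + \int_{-1}^\tau E(s)\, ds$, where $D$ is the sum of the four data terms; Grönwall's inequality then gives $E(\tau) \cleq D\, e^{C(\tau+1)} \cleq D$ since $\tau \leq T$, with the constant depending only on $\|q\|_\infty$ and $T$. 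The main obstacle is purely in the first step -- correctly identifying the characteristic flux and checking that it reduces to tangential derivatives controlled by $\|\alpha\|_{H^1(\Gamma)}$; once that algebra is in place, the remaining boundary, source, and Grönwall estimates are routine.
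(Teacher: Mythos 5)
Your proposal is correct and follows essentially the same route as the paper's proof: the multiplier $2\alpha_t$ integrated over $Q_{+,\tau}$, the key observation that the flux on the characteristic face $\Gamma$ reduces to the tangential combination $(\alpha_t+\alpha_z)^2 + |\nabla_y \alpha|^2$ controlled by $\|\alpha\|_{H^1(\Gamma)}$, Young's inequality on the lateral boundary, and Gr\"onwall in time over the sections $\{t=s\} \cap Q_+$. The only difference is cosmetic: the paper builds the $\alpha^2$ term directly into the energy by using the identity for $2\alpha_t(\Box\alpha + \alpha)$, so $\alpha^2$ also appears in the $\Gamma$-flux, whereas you recover $\int_B \alpha^2(x,\tau)\,dx$ separately via the fundamental theorem of calculus from $\Gamma$; both devices work and lead to the same Gr\"onwall step.
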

%

\begin{figure}[h]
\begin{center}
\epsfig{file=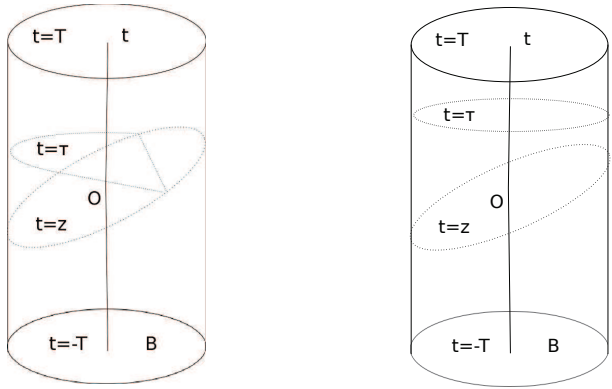, height=2in}
\end{center}
\caption{The $t=\tau$ section of $Q$}
\label{fig:penergy}
\end{figure}

\begin{proof}

Below $\L = \Box + q$.
For any $\tau \in [-1,T]$, the plane $t=\tau$ cuts $t=z$ inside $Q$ when $\tau \leq 1$ and does not cut $t=z$ 
inside $Q$ when $\tau>1$ - see Figure \ref{fig:penergy}. We define two energies associated with top and 
bottom surfaces of the boundary of $Q_{+,\tau}$.
\vspace{-0.2in}
\begin{itemize}
\item If $-1 \leq \tau \leq 1$ then define
\begin{align*}
E(\tau) &:= \int_{B \cap \{ z \leq \tau \}} ( \alpha^2 + \alpha_t^2 + |\nabla \alpha|^2)(y,z,\tau) \, dz \, dy;
\\
J(\tau) &:=  \int_{B \cap \{ z \leq \tau \}}  ( \alpha^2 + (\alpha_t+ \alpha_z)^2 + |\nabla_y \alpha|^2)(y,z,z) \, dz \, dy;
\end{align*}
\item If $\tau>1$ then define
\begin{align*}
E(\tau)  &:=  \int_B ( \alpha^2 + \alpha_t^2 + |\nabla \alpha|^2)(y,z,\tau) \, dz  \, dy;
\\
J(\tau) & := J(1).
\end{align*}
\end{itemize}
\vspace{-0.2in}

For any $\tau \in [-1,T]$, integrating the relation
\[
2\alpha_t (\Box \alpha + \alpha) = ( \alpha_t^2 + |\nabla \alpha|^2 + \alpha^2)_t - 
2 \nabla \cdot (\alpha_t \nabla \alpha),
\]
over the region $Q_{+,\tau}$  we obtain
\begin{align*}
E(\tau) & = J(\tau)  + 2\int_{ Q _{+,\tau}} \alpha_t (\Box \alpha + \alpha )
 + 2  \int_{\Sigma_{+,\tau}} \alpha_t \pa_{\nu} \alpha\, dS
 \\
 & = J(\tau) + 2\int_{ Q_{+,\tau}} \alpha_t (\L \alpha + (1-q) \alpha )
 + 2  \int_{\Sigma_{+,\tau}}  \alpha_t \pa_{\nu} \alpha\, dS
 \\
 & \cleq J(1)  + \int_{Q_+} |\L \alpha|^2
 + \int_{\Sigma_+} |\nabla_{x,t} \alpha|^2 + | \alpha |^2  \, dS   + \int_0^\tau E(t) \, dt.
\end{align*}
with the constant dependent on $\|q\|_\infty$. Hence, by Gronwall's inequality, for all $\tau \in [-1,T]$, we have
\[
E(\tau) \cleq J(1) + \int_{Q_+} |\L \alpha|^2
 + \int_{\Sigma_+} |\nabla_{x,t} \alpha|^2 + | \alpha |^2  \, dS
\]
with the constant dependent on $T$ and $\|q||_\infty$. 
\end{proof}

%

Next we estimate the weighted energy on $t=z$.
\begin{lemma}[Energy estimate near $t=z$]\label{lemma:neartez}
If $T>1$, $q$ a smooth function on $\Bb$ and $\alpha, \psi$ are smooth functions on $Q_+$ then 
\[
\|\alpha\|_{1,\sigma,\Gamma}^2
\, \cleq \,
\sigma  \|\alpha\|_{1,\sigma, Q_+}^2
+ \| (\Box + q) \alpha \|_{0, \sigma, Q_+}^2
 + \| \alpha\|_{1, \sigma, \Sigma_+}^2 + \| \partial_{\nu} \alpha\|_{0, \sigma, \Sigma_+}^2
\]
for all $\sigma>1$ large enough, with the constant dependent only on 
$\|q\|_\infty$, $\|\psi\|_{C^2(Q_+)}$ and $T$.
\end{lemma}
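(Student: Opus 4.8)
The plan is to run a weighted energy identity on $Q_+$, exploiting that $\Gamma = \{t=z\}$ is characteristic for $\Box$: its tangent space is spanned by the $\pa_{y_i}$ and by $\pa_t+\pa_z$, so the flux of the natural energy current through $\Gamma$ reproduces precisely the tangential energy $|\nabla_\Gamma\alpha|^2 = (\alpha_t+\alpha_z)^2 + |\nabla_y\alpha|^2$ entering $\|\alpha\|_{1,\sigma,\Gamma}$. First I would multiply $\Box\alpha$ by $2e^{2\sigma\psi}\alpha_t$ and rearrange into divergence form,
\[
2 e^{2\sigma\psi}\alpha_t\,\Box\alpha = \nabla_{x,t}\cdot\tilde{P} - 2\sigma e^{2\sigma\psi}\psi_t\,(\alpha_t^2+|\nabla_x\alpha|^2) + 4\sigma e^{2\sigma\psi}\alpha_t\,\nabla_x\psi\cdot\nabla_x\alpha,
\]
where $\tilde{P}$ is the weighted energy--momentum field with time component $e^{2\sigma\psi}(\alpha_t^2+|\nabla_x\alpha|^2)$ and spatial part $-2e^{2\sigma\psi}\alpha_t\nabla_x\alpha$. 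Integrating over $Q_+$ and applying the divergence theorem, the flux $\tilde{P}\cdot\nu$ on $\Gamma$ (outward conormal $\propto e_z - e_t$) collapses, because $\Gamma$ is characteristic, to $-\tfrac1{\sqrt2}e^{2\sigma\psi}[(\alpha_t+\alpha_z)^2+|\nabla_y\alpha|^2]$ --- exactly minus the weighted tangential energy --- while on the top slice $\pa Q_+\cap\{t=T\}$ it equals $+e^{2\sigma\psi}(\alpha_t^2+|\nabla_x\alpha|^2)\ge 0$, and on $\Sigma_+$ it is controlled by $\|\alpha\|_{1,\sigma,\Sigma_+}^2 + \|\pa_\nu\alpha\|_{0,\sigma,\Sigma_+}^2$. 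Solving for the $\Gamma$ flux thus bounds $\int_\Gamma e^{2\sigma\psi}|\nabla_\Gamma\alpha|^2$ by four groups of terms: the top-slice energy $E(T)$, the lateral flux, the source $\int_{Q_+}e^{2\sigma\psi}\alpha_t\Box\alpha$, and the two bulk weight-derivative integrals. Writing $\Box\alpha = (\Box+q)\alpha - q\alpha$ and using Cauchy--Schwarz together with $|\psi_t|,|\nabla_x\psi|\le\|\psi\|_{C^1}$, all but the first group are at once dominated by $\|(\Box+q)\alpha\|_{0,\sigma,Q_+}^2 + \|\alpha\|_{1,\sigma,\Sigma_+}^2 + \|\pa_\nu\alpha\|_{0,\sigma,\Sigma_+}^2 + \sigma\|\alpha\|_{1,\sigma,Q_+}^2$, the single power of $\sigma$ coming from the weight derivatives (and from absorbing $q\alpha$ via $\|q\|_\infty^2\sigma^{-2}\cdot\sigma^2\alpha^2$).

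The crux is the remaining top-slice energy $E(T) = \int_{\Bb\times\{T\}}e^{2\sigma\psi}(\alpha_t^2+|\nabla_x\alpha|^2)$, which is not among the allowed right-hand terms and cannot be routed through $\Gamma$ without circularity. I would instead bound it by an ordinary energy estimate that never meets $\Gamma$: since $z\le 1<T$ on $\Bb$, the slab $\Bb\times[1,T]$ lies in $Q_+$ and is disjoint from $\Gamma$, so on the $\Gamma$-free cylinders $\Bb\times[\tau',\tau]$ with $1\le\tau'\le\tau\le T$ the standard weighted energy inequality gives $E(\tau)\cleq E(\tau') + \|\alpha\|_{1,\sigma,\Sigma_+}^2 + \|(\Box+q)\alpha\|_{0,\sigma,Q_+}^2 + \sigma\|\alpha\|_{1,\sigma,Q_+}^2$. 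Integrating this in $\tau'$ over $[1,T]$ replaces the single-slice energy by the bulk quantity $\int_1^T E(\tau')\,d\tau' = \int_{\Bb\times[1,T]}e^{2\sigma\psi}(\alpha_t^2+|\nabla_x\alpha|^2)\le\|\alpha\|_{1,\sigma,Q_+}^2$, so that $E(T)$ itself obeys $E(T)\cleq \sigma\|\alpha\|_{1,\sigma,Q_+}^2 + \|(\Box+q)\alpha\|_{0,\sigma,Q_+}^2 + \|\alpha\|_{1,\sigma,\Sigma_+}^2 + \|\pa_\nu\alpha\|_{0,\sigma,\Sigma_+}^2$ (the factor $(T-1)^{-1}$ absorbed into the constant). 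Feeding this back into the $\Gamma$-flux identity disposes of $E(T)$ and yields the bound for $\int_\Gamma e^{2\sigma\psi}|\nabla_\Gamma\alpha|^2$.

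Finally, to also control the zeroth-order piece $\sigma^2\int_\Gamma e^{2\sigma\psi}|\alpha|^2$ of $\|\alpha\|_{1,\sigma,\Gamma}^2$, I would add a trace/fundamental-theorem-of-calculus estimate across $\Gamma$ (equivalently, carry a lower-order multiplier term $2\sigma^2 e^{2\sigma\psi}\alpha\alpha_t$ through the same computation): expressing $\alpha^2$ on $\Gamma$ as an integral of $\pa(\alpha^2)$ along the transversal null direction and using $\sigma^2|\alpha|\,|\nabla\alpha|\le\tfrac12(\sigma^3\alpha^2+\sigma|\nabla\alpha|^2)$ gives $\sigma^2\int_\Gamma e^{2\sigma\psi}\alpha^2\cleq\sigma\|\alpha\|_{1,\sigma,Q_+}^2$, again of the permitted form. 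Collecting the tangential and zeroth-order pieces gives the stated inequality for all large $\sigma$, with the constant depending only on $\|q\|_\infty$, $\|\psi\|_{C^2(Q_+)}$ and $T$.

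I expect the treatment of the top-slice energy $E(T)$ to be the main obstacle: the target estimate carries no boundary term on $\{t=T\}$, yet the energy identity couples the two outgoing fluxes (through $\Gamma$ and through the top) so tightly that neither can be isolated from the other directly. The device of propagating the energy on the $\Gamma$-free slab $\Bb\times[1,T]$ and then averaging in the time variable to convert a slice energy into a bulk integral is what both removes $E(T)$ and accounts for the characteristic single power of $\sigma$ multiplying $\|\alpha\|_{1,\sigma,Q_+}^2$ on the right. A secondary point requiring care is checking that the characteristic nature of $\Gamma$ genuinely annihilates the transversal derivative $\alpha_t-\alpha_z$ in the $\Gamma$ flux, leaving only the tangential energy there.
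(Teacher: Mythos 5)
Your proposal is correct and follows essentially the same route as the paper: there the authors conjugate ($\mu = e^{\sigma\psi}\alpha$), integrate the identity $2\mu_t(\Box\mu+\sigma^2\mu) = (\mu_t^2+|\nabla\mu|^2+\sigma^2\mu^2)_t - 2\nabla\cdot(\mu_t\nabla\mu)$ over $Q_{+,\tau}$ so that the characteristic flux through $\Gamma$ produces exactly the tangential energy $J$, and then average over $\tau\in[1,T]$ to turn the top-slice energy $E(\tau)$ into a bulk integral absorbed by $\sigma\|\alpha\|_{1,\sigma,Q_+}^2$ --- precisely your combination of the characteristic-flux identity, the slab energy estimate, and the time-averaging device. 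The only cosmetic differences are that you use the weighted multiplier $2e^{2\sigma\psi}\alpha_t$ instead of conjugating, and that the paper carries the zeroth-order piece inside the energy from the start (via the $\sigma^2\mu$ term and the absorption $2\sigma^2\mu\mu_t\le\sigma(\mu_t^2+\sigma^2\mu^2)$) rather than handling it by a separate trace argument.
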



\begin{proof}

Below $\L = \Box + q$.
Define $\mu := e^{\sigma \psi} \alpha$.
For any $\tau \in [1,T]$, define the energy on the plane $t=\tau$ as (see Figure \ref{fig:penergy})
\[
E(\tau)  :=  \int_B ( \sigma^2 \mu^2 + \mu_t^2 + |\nabla \mu|^2)(y,z,\tau) \, dz  \, dy
\]
and the energy on the plane $t=z$ as
\[
J := \int_{B}  ( \sigma^2 \mu^2 + (\mu_t+ \mu_z)^2 + |\nabla_y \mu|^2)(y,z,z) \, dz \, dy.
\]
For any $\tau \in [1,T]$, integrating the relation
\beqn
2\mu_t (\Box \mu + \sigma^2 \mu) = ( \mu_t^2 + |\nabla \mu|^2 + \sigma^2 \mu^2)_t - 
2 \nabla \cdot (\mu_t \nabla \mu),
\label{eq:pmuiden}
\eeqn
over the region $Q_{+,\tau}$  and using $2 \sigma^2 \mu \mu_t \leq \sigma ( \mu_t^2 + \sigma^2  \mu^2)$),
we obtain
\begin{align*}
J & = E(\tau) -  2\int_{ Q_{+,\tau}} \mu_t (\L \mu + (\sigma^2-q) \mu )
 - 2  \int_{\Sigma_{+,\tau}}  \mu_t \pa_{\nu} \mu\, dS
 \\
 & \cleq E(\tau) + \sigma \int_{Q_+} |\nabla_{x,t} \mu|^2 + \sigma^2 \mu^2 + \int_{Q_+} |\mu_t \L \mu|  + 
 \int_{\Sigma_+} | \mu_t \pa_{\nu} \mu | \, dS.
 \end{align*}
 Integrating this over $\tau \in [1,T]$ and noting that 
 \[
 \int_1^T E(\tau) \, d \tau \cleq \int_{Q_+} |\nabla_{x,t} \mu|^2 + \sigma^2 \mu^2
 \]
 we obtain
 \beqn
 J \cleq \, \sigma \int_{Q_+} |\nabla_{x,t} \mu|^2 + \sigma^2 \mu^2 + \int_{Q_+} | \mu_t \L \mu |  + 
 \int_{\Sigma_+}  |\mu_t \pa_{\nu} \mu |\, dS.
 \label{eq:pJ1}
 \eeqn
 with the constant dependent only on $T$ and $\|q\|_\infty$.
  
Now $\mu = \alpha e^{\sigma \psi}$ so
\begin{align}
|\mu|  \cleq e^{ \sigma \psi} |\alpha|, \qquad
|\nabla_{x,t} \mu|  \cleq e^{\sigma \psi} ( |\nabla_{x,t} \alpha| + \sigma |\alpha|).
\label{eq:pmu2}
\end{align}
Further, since $\alpha$ is smooth in the region $t \geq z$, on $Q_+$ we have
\begin{align*}
\L \mu & =(\Box +q) \mu  = e^{\sigma \psi} \left ( \L \alpha
+ 2 \sigma (\psi_t \alpha_{t} -  \nabla \psi \cdot \nabla \alpha)
+ \sigma \alpha \Box \psi + \sigma^2 (\psi_t^2 - |\nabla \psi|^2) \alpha \right )
\end{align*}
which implies
\beqn
|\L \mu| \cleq e^{\sigma \psi} ( |\L \alpha| + \sigma |\nabla_{x,t} \alpha| + \sigma^2 |\alpha| ).
\label{eq:pmu3}
\eeqn
Hence using (\ref{eq:pmu2}), (\ref{eq:pmu3}) and noting 
\[
2 \sigma^2 |\nabla_{x,t} \alpha| \, |\alpha| \leq \sigma |\nabla_{x,t} \alpha|^2 + \sigma^3 \alpha^2,
\]
(\ref{eq:pJ1}) implies
\begin{align}
J & \cleq \| \L \alpha \|_{0,\sigma, Q_+}^2 + \sigma \|\alpha\|_{1, \sigma, Q_+}^2 + \|\alpha\|_{1, \sigma, \Sigma_+}^2 + \| \partial_{\nu} \alpha\|_{0, \sigma, \Sigma_+}^2
\label{eq:pJcarl}
\end{align}
with the constant dependent only on $T$ and $\|q\|_\infty$.

Now $\alpha = e^{-\sigma \psi} \mu$ so $|\alpha| \leq e^{-\sigma \psi} |\mu|$ and
\begin{align*}
|\alpha_z + \alpha_t| & \cleq e^{-\sigma \psi} ( |\mu_z + \mu_t| + \sigma |\mu|),
\\
|\nabla_y \alpha| & \cleq e^{-\sigma \psi} ( |\nabla_y \mu| + \sigma |\mu|)
\end{align*}
hence
\begin{align*}
|\alpha_z + \alpha_t|^2 + |\nabla_y \alpha|^2 +
\sigma^2 \alpha^2& \cleq e^{-2\sigma \psi} ( |\mu_z + \mu_t|^2 + |\nabla_y \mu|^2 + \sigma^2 |\mu|^2),
\end{align*}
so the lemma follows from (\ref{eq:pJcarl}) and the definition of $J$.
\end{proof}

%
%

\subsection{The construction of $\rho(x,t)$}

\begin{lemma}[The estimates for $\rho(x,t)$]\label{lemma:pweak}
If $T>1$, $q \in C_c^\infty(\R^n)$ and $\rhob(y)$ is a compactly supported smooth function on $\R^{n-1}$ then the 
characteristic boundary value problem
\begin{align}
(\Box + q) \rho =0, & \qquad \text{on} ~ Q_-
\label{eq:pwde}
\\
\rho(y,z,z) = \rhob(y), & \qquad \text{on}~ \Gamma
\label{eq:pwcc}
\\
\rho(y,z,t) = \rhob(y), & \qquad \text{on} ~ \Sigma_-
\label{eq:pwbc}
\end{align}
has a unique solution in $H^1(Q_-)$ with  $\pa_\nu \rho \in L^2(\Sigma_-)$, $\rho(\cdot,\tau) \in H^1(B)$,
$\rho_t(\cdot, \tau) \in L^2(B)$
 and
\[
\|\rho\|_{H^1(\Gamma)} + \|\rho(\cdot,\tau)\|_{H^1(B)} + \| \rho_t(\cdot,\tau)\|_{L^2(B)}
+ \| \pa_{\nu} \rho \|_{ L^2(\Sigma_-) } 
+ \|\rho\|_{H^1(\Sigma_-)}
\cleq
\|\rho\|_{H^1(\Sigma \cap \Gamma)},
\]
for all $\tau \in [-T,-1]$,
with the constant dependent only on $\|q\|_\infty$ and $T$.
Further
\[
(\Box + q) (\rho(y,z,t) H(z-t) ) = 0 \qquad \text{on} ~~ Q.
\]
\end{lemma}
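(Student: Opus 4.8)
The plan is to treat \eqref{eq:pwde}--\eqref{eq:pwbc} as a characteristic (Goursat-type) initial value problem, solved by evolving in decreasing $t$ from the characteristic surface $\Gamma$, and to exploit two structural features of the data. Since $\rho = \rhob(y)$ on $\Gamma$ is constant along the characteristic direction $\pa_t+\pa_z$, we have $(\rho_t+\rho_z)|_\Gamma = \frac{d}{dz}\rhob(y) = 0$; and since $\rho=\rhob(y)$ is independent of $t$ on $\Sigma_-$, we have $\rho_t|_{\Sigma_-}=0$. These two facts are what make the energy fluxes through the characteristic top and through the timelike lateral boundary controllable by the $H^1$ norm of the data alone, with no loss of derivatives. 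Throughout I would bound $\|\rhob\|_{H^1}$ over the ball by $\|\rho\|_{H^1(\Sigma\cap\Gamma)}$: parametrising the edge $\Sigma\cap\Gamma$ by $y\mapsto(y,\pm\sqrt{1-|y|^2})$ and using that $\rhob$ depends only on $y$ gives $\|\rhob\|_{H^1(\{|y|\le 1\})}\cleq\|\rho\|_{H^1(\Sigma\cap\Gamma)}$, so the two data terms $\|\rho\|_{H^1(\Gamma)}$ and $\|\rho\|_{H^1(\Sigma_-)}$ on the left are immediately of the right size.

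The core estimate is for the slice energies. Multiplying $(\Box+q)\rho=0$ by $\rho_t$ gives $\pa_t(\rho_t^2+|\nabla\rho|^2+q\rho^2)-2\nabla\cdot(\rho_t\nabla\rho)=0$ (using $q_t=0$), and I would integrate this over $Q_-\cap\{t\ge\tau\}$ for $\tau\in[-T,-1]$, where the bottom slice $\{t=\tau\}$ is a full ball. The divergence theorem produces three boundary contributions: the slice $\{t=\tau\}$ yields $\int_B(\rho_t^2+|\nabla\rho|^2+q\rho^2)(\cdot,\tau)$; the characteristic top $\Gamma$ yields an integrand $(\rho_t+\rho_z)^2+|\nabla_y\rho|^2+q\rho^2$, which on $\Gamma$ reduces to $|\nabla_y\rhob|^2+q\rhob^2$ because $(\rho_t+\rho_z)|_\Gamma=0$; and the lateral boundary $\Sigma_-$ yields $\int 2\rho_t\,\pa_\nu\rho$, which vanishes because $\rho_t|_{\Sigma_-}=0$. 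Hence $\int_B(\rho_t^2+|\nabla\rho|^2)(\cdot,\tau)$ is bounded by $\|\rhob\|_{H^1(\Gamma)}^2$ plus the zeroth-order term $\int q\rho^2$; controlling $\|\rho(\cdot,\tau)\|_{L^2(B)}$ via $\rho(y,z,\tau)=\rhob(y)-\int_\tau^z\rho_t\,ds$ and closing with Gronwall in $\tau$ gives the bound on $\|\rho(\cdot,\tau)\|_{H^1(B)}+\|\rho_t(\cdot,\tau)\|_{L^2(B)}$ uniformly for $\tau\in[-T,-1]$, and integrating in $\tau$ gives $\rho\in H^1(Q_-)$. Applying the same identity to the difference of two solutions (for which $\rhob=0$) gives uniqueness.

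For the normal-derivative trace I would use the radial multiplier $x\cdot\nabla\rho$, exactly as in the proof of Lemma \ref{lemma:pexterior}: integrating the identity for $2(x\cdot\nabla\rho)(\Box\rho)$ over $Q_-$, the lateral flux on $\Sigma_-$ (where $x\cdot\nu=1$ and $x\cdot\nabla\rho=\pa_\nu\rho$) isolates $\int_{\Sigma_-}|\pa_\nu\rho|^2$, while the top flux on $\Gamma$ again involves only $(\rho_t+\rho_z)$ and tangential derivatives of $\rhob$; moving the interior integral (already bounded by the slice energies) and the tangential terms to the right yields $\|\pa_\nu\rho\|_{L^2(\Sigma_-)}\cleq\|\rho\|_{H^1(\Sigma\cap\Gamma)}$. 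Existence of a smooth solution with these bounds I would obtain by a Galerkin approximation adapted to the growing spatial domains $\{z\ge t\}\cap\Bb$ (equivalently, after subtracting $\rhob(y)$ to homogenise the boundary data and solving the resulting characteristic problem with smooth right-hand side), using the a priori energy estimates above to pass to the limit. The characteristic nature of $\Gamma$, which prevents direct appeal to standard spacelike-initial-surface IBVP theory, is the main technical obstacle and the reason both the Goursat formulation and the special static structure of the data are essential.

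Finally, for the last assertion I would compute $(\Box+q)(\rho H(z-t))$ as a distribution on $Q$. Writing $\delta=\delta(z-t)$ and using $\pa_t H(z-t)=-\delta$, $\pa_z H(z-t)=\delta$, the second-order terms give $\pa_t^2(\rho H)=\rho_{tt}H-2\rho_t\delta+\rho\delta'$ and $\pa_z^2(\rho H)=\rho_{zz}H+2\rho_z\delta+\rho\delta'$, so the $\delta'$ contributions cancel and $(\Box+q)(\rho H(z-t))=((\Box+q)\rho)\,H(z-t)-2(\rho_t+\rho_z)\,\delta(z-t)$. The first term vanishes since $(\Box+q)\rho=0$ on $Q_-$, where $H(z-t)=1$, and the second vanishes since the coefficient $(\rho_t+\rho_z)$ equals $\frac{d}{dz}\rho(y,z,z)=\frac{d}{dz}\rhob(y)=0$ on the support $\{t=z\}$ of $\delta(z-t)$. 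Hence $(\Box+q)(\rho H(z-t))=0$ on $Q$.
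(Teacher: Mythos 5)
Your energy identities are correct, and you have isolated exactly the structural facts that make this lemma work: $(\rho_t+\rho_z)|_\Gamma=\frac{d}{dz}\rhob(y)=0$ kills the flux through the characteristic top, $\rho_t|_{\Sigma_-}=0$ kills the lateral flux, and the edge bound $\|\rhob\|_{H^1(\{|y|\le 1\})}\cleq\|\rho\|_{H^1(\Sigma\cap\Gamma)}$ converts everything to the stated right-hand side; your Leibniz computation of $(\Box+q)(\rho H(z-t))$ is also the right one. The problem is that every one of these computations is applied directly to the solution $\rho$, while the lemma only provides (and only claims) $\rho\in H^1(Q_-)$. The multiplier arguments need traces of $\rho_t$ and $\nabla\rho$ on $\Gamma$, on slices, and on $\Sigma_-$, i.e.\ essentially $H^2$ regularity up to the boundary; the same is true of the jump computation across $\Gamma$. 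In particular your uniqueness argument --- the energy identity applied to the difference of two $H^1$ solutions --- is not justified at this regularity. The paper proves uniqueness instead by duality: pair $\rho$ against smooth solutions of an adjoint problem with Neumann condition launched from $t=-T$, using the fact that on a characteristic surface the Green's-identity boundary terms involve only $\rho$ and tangential derivatives of its trace, so they make sense for $H^1$ functions and vanish when the trace is zero. For the estimates, the paper constructs an explicit approximating sequence $\rho_k\in H^2(Q_-)$ --- a genuinely delicate step requiring a corrector function $g$ so that the approximate boundary data is $C^1$ across the corner $\Sigma\cap\Gamma$ --- runs the multiplier estimates on $\rho_k$ (citing Lemma 3.6 of \cite{BY17}, the same strategy as your radial-multiplier step), and passes to the limit.

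The more serious gap is existence. A Galerkin scheme on the time-dependent domains $\Bb\cap\{z\geq t\}$ is not a citable or routine construction --- this is precisely the obstruction created by the characteristic surface $\Gamma$, and subtracting $\rhob(y)$ to homogenize the data merely restates the problem. The paper's existence proof goes a different way, and this is the key idea your proposal is missing: extend $\rhob$ to a compactly supported smooth function on $\R^n$, solve the characteristic (Goursat) IVP $(\Box+q)\rhobv=0$ in the full free-space region $\{t\geq z\}$ with $\rhobv=\rhob$ on $\{t=z\}$ (well posed by the arguments behind Proposition \ref{prop:pwell}, i.e.\ those of \cite{RU14}), then use $\rhobv$ to manufacture data on $Q\cap\{t\geq 1\}$ and on $\Sigma$ for a \emph{standard backward non-characteristic} IBVP on the whole cylinder $Q$, solvable by Theorem 3.1 of \cite{BY17}; a domain-of-dependence argument shows the resulting solution $\rhot$ coincides with $\rhobv$ on $Q_+$, and its restriction to $Q_-$ is the desired $\rho$, inheriting the stated trace regularity from the standard theory. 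Without a construction of this kind (or a fully worked-out scheme on the shrinking slices), your proposal establishes a priori estimates for a solution it has not produced, and the uniqueness, trace, and distributional statements all rest on regularity that has not been established.
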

%
%
\begin{proof}

Below $\L = \Box + q$.
Since $\rhob(y)$ is also a smooth function on $\R^n$ independent of $z$, we redefine 
$\rhob$ to be a smooth
compactly supported function on $\R^n$ which agrees with the old $\rhob$ on a neighborhood of $\Bb$. This 
redefinition does not change the lemma and avoids introducing a new symbol.

Arguing as one would to prove Proposition \ref{prop:pwell} (see the proof of Theorem 1 in \cite{RU14}), the characteristic IVP
\begin{align}
(\Box +q)\rhobv =0, & \qquad (y,z,t) \in \R^n \times \R, ~ t \geq z,
\label{eq:rhobvde}
\\
\rhobv(y,z,z) = \rhob(y,z), & \qquad (y,z) \in \R^n,
\label{eq:rhobvcc}
\\
\rhobv(y,z,t) =0, & \qquad z \leq t <<0,
\label{eq:rhobvic}
\end{align}
is well posed and has a smooth solution.
On $\Sigma$, define the function
\[
\ft(y,z,t) = \begin{cases} \rhobv(y,z,t), & \text{on} ~ \Sigma_+, \\ \rhob(y,z), & \text{on} ~ \Sigma_-; \end{cases}
\] 
 $\ft$ is in $H^1(\Sigma)$ because $\rhobv=\rhob$ on $\Gamma$. 
 Hence, by standard theory (see Theorem 3.1 in \cite{BY17}), the backward IBVP
\begin{align}
(\Box + q)\rhot =0, & \qquad \text{on} ~ Q,
\label{eq:rhotde}
\\
\rhot = \rhobv, & \qquad \text{on} ~ Q \cap \{ t \geq 1 \},
\label{eq:rhotic}
\\
\rhot = \ft, & \qquad \text{on} ~ \Sigma,
\label{eq:rhotbc}
\end{align}
has a unique solution in $H^1(Q)$ with $\pa_\nu \rhot  \in L^2(\Sigma)$ and $\rhot(\cdot,\tau) \in H^1(B)$,
 $\rhot_t(\cdot, \tau) \in L^2(B)$ for all $\tau \in [-T,-1]$.
Also, from domain of dependence arguments one can see that $\rhot=\rhobv$ on $Q_+$, and in particular $\rhot = \rhob$ on 
$\Gamma$. Let $\rho$ be the restriction of $\rhot$ to $Q_-$; this is the desired solution. 


To prove uniqueness, we need to show that if $\rhob(y)=0$ and $\rho \in H^1(Q_-)$ is a
solution of (\ref{eq:pwde})-(\ref{eq:pwbc}), then $\rho=0$. Given a smooth function $\psi$ on $Q_-$ 
which is supported in the interior of $Q_-$, 
the IBVP
\begin{align*}
(\Box + q)\phi = \psi, & \qquad (x,t) \in B \times (-T,\infty)
\\
\phi(\cdot, -T) =0, ~ \pa_t \phi(\cdot,-T)=0, & \qquad \text{on} ~B
\\
\pa_\nu \phi = 0, & \qquad \text{on}  ~S \times [-T,\infty)
\end{align*}
has a solution which is smooth on $\Bb \times [-T,\infty)$ (from Theorem 5.1 in Chapter IV of \cite{La85} and its application to $t$
derivatives of $\phi$).
Hence, using the definition of a weak solution of (\ref{eq:pwde}) - (\ref{eq:pwbc}) with $\rhob=0$, we have
\[
\int_{Q-} \rho \, (\Box + q) \phi =0;
\]
note that there is no contribution from the boundary of $Q_-$, not even from $\Gamma$,
 because the boundary terms on $\Gamma$ 
involve $\rho$ or the first order derivatives of $\rho$ in directions tangential to $\Gamma$. Hence
\[
\int_{Q_-} \rho \, \psi =0
\]
for every smooth function $\psi$ on $Q_-$ which is supported in the interior of $Q_-$. Hence $\rho=0$ on $Q_-$.


We next show that $(\Box + q) ( \rho(y,z,t) H(z-t) )=0$ on $Q$. Let $\phi(x,t)$ be a smooth 
function on $Q$ with support in the interior of $Q$. Noting that $\rhot$ is smooth on $Q_+$ and
$(\Box + q)\rhot=0$ on $Q_+$, from the construction of $\rhot$ we know that
\begin{align*}
0 & = \la \L  \rhot, \phi \ra = \la \rhot, \L \phi \ra
= \int_{Q_+} \rhobv \, \L \phi + \int_{Q_-} \rho \, \L \phi
\\
& = \int_{Q_+} \phi \, \L \rhobv + (\rhobv \phi_t - \rhobv_t \phi)_t + 
\nabla_x \cdot (\phi \nabla_x \rhobv -\rhobv \nabla_x \phi )
 + \int_{Q_-} \rho \, \L \phi
 \\
 & =  \int_{Q_-} \rho \, \L \phi 
 + \frac{1}{\sqrt{2}} \int_\Gamma \phi (\rhobv_t +\rhobv_z)  - \rhobv (\phi_t + \phi_z) \, dS
 \\
 & =  \int_{Q_-} \rho \, \L \phi 
 + \int_{\Bb} \phi(y,z,z) \frac{d}{dz} (\rhobv(y,z,z)) - \rhobv(y,z,z) \frac{d}{dz} (\phi(y,z,z))
 \, dy \, dz
 \\
  & =  \int_{Q_-} \rho \, \L \phi + 2 \int_{\Bb} \phi(y,z,z) \frac{d}{dz} ( \rhobv(y,z,z) ) \, dy \, dz
  \\
  & = \int_{Q_-} \rho \, \L \phi + 2 \int_{\Bb} \phi(y,z,z) \frac{d}{dz}( \rhob(y))  \, dy \, dz
  \\
  & =  \int_{Q_-} \rho \, \L \phi.
\end{align*}
Hence $\L ( \rho(y,z,t) H(z-t) ) =0$ on $Q$.


We now obtain the estimate in the lemma. We construct a sequence of functions $\rho_k \in H^2(Q_-)$ such
that $\rho_k =\rho$ on $\Gamma$,  $\rho_k( \cdot,\tau) \to \rho(\cdot, \tau)$ in $H^1(B)$, 
$\pa_t \rho_k(\cdot, \tau) \to \rho_t(\cdot,\tau)$ in $L^2(B)$ for all $\tau \in [-1,-T]$ and $\rho_k \to \rho$ in $H^1(\Sigma_-)$,
$\pa_\nu \rho_k \to \pa_\nu \rho $ in $L^2(\Sigma_-)$. Using multiplier methods and energy estimates as in the proof of 
Lemma 3.6 in \cite{BY17}, one 
can show that
\[
\|\rho_k(\cdot,\tau)\|_{H^1(B)} + \|\pa_t \rho_k(\cdot,\tau)\|_{L^2(B)} 
+ \| \pa_\nu \rho_k \|_{L^2(\Sigma_-)}
\cleq \| \rho_k\|_{H^1(\Gamma)} + \|\rho_k\|_{H^1(\Sigma_-)},
\]
for all $\tau \in [-T,-1]$.
Hence, letting $k \to \infty$, we obtain
\[
\|\rho(\cdot,\tau)\|_{H^1(B)} + \|\pa_t \rho(\cdot,\tau)\|_{L^2(B)} 
+ \| \pa_\nu \rho \|_{L^2(\Sigma_-)}
\cleq \| \rho\|_{H^1(\Gamma)} + \|\rho\|_{H^1(\Sigma_-)}.
\]
So the estimate in the lemma will follow if we can show that 
\beqn
\|\rho\|_{H^1(\Gamma)} + \|\rho\|_{H^1(\Sigma_-)}
\cleq \|\rho\|_{H^1(\Sigma \cap \Gamma)}.
\label{eq:rhorhorho}
\eeqn
Now
\[
\| \rho \|_{H^1(\Sigma \cap \Gamma)}^2
=
\sqrt{2}
\int_S |\rhob(y)|^2 + |z \nabla_y \rhob (y)|^2 + \sum_{i \neq j} |( y_i \pa_{y_j} - y_j \pa_{y_i}) \rhob(y)|^2 \, dS_{y,z}.
\]
On the other hand, on $\Gamma$, $\rho(y,z,z) = \rhob(y)$ so
\begin{align*}
\| \rho \|_{H^1(\Gamma)}^2 
& =\sqrt{2} \int_B |\rhob(y)|^2 + |\nabla_y \rhob (y)|^2 \, dy \, dz
 \cleq  \int_{|y| \leq 1} \sqrt{1-|y|^2} ( |\rhob(y)|^2 + |\nabla_y \rhob (y)|^2 ) \, dy
\\
&  \cleq \int_S  z^2 (  |\rhob(y)|^2 + |\nabla_y \rhob (y)|^2 ) \, dS_{y,z}
\\
& \cleq \| \rho \|_{H^1(\Sigma \cap \Gamma)}^2.
\end{align*}
Further, on $\Sigma_-$ the tangential derivatives of $\rho$ are derivatives
in the directions $\pa_t$, $ z \nabla_y - y \pa_z $ and
$ y_i \pa_{y_j} - y_j \pa_{y_i}$ of the function $\rhob(y)$ which is a smooth extension of the restriction of $\rho$ to $\Sigma_-$.
Hence
\begin{align*}
\|\rho\|_{H^1(\Sigma_-)}^2
&
= \int_{\Sigma_-} |\rhob(y)|^2 + | z \nabla_y \rhob(y) |^2 
+ \sum_{i \neq j} |( y_i \pa_{y_j} - y_j \pa_{y_i}) \rhob(y)|^2 \, dS_{y,z,t}.
\\
& \cleq \int_S |\rhob(y)|^2 + | z \nabla_y \rhob(y) |^2 
+ \sum_{i \neq j} |( y_i \pa_{y_j} - y_j \pa_{y_i}) \rhob(y)|^2 \, dS_{y,z}
\\
& \cleq \| \rho \|_{H^1(\Sigma \cap \Gamma)}^2.
\end{align*}
So we have proved (\ref{eq:rhorhorho}).

It remains to construct the approximating sequence $\rho_k$. From (\ref{eq:rhobvde}), one has
\[
(\pa_t + \pa_z) ( \rhobv_t - \rhobv_z)(y,z,z) = \Delta_y \rhob(y,z) - q(y,z) \rhob(y,z),
\qquad (y,z) \in \Bb
\]
which implies
\[
\frac{d}{dz} (  ( \rhobv_t - \rhobv_z)(y,z,z) ) = \Delta_y \rhob(y,z) - q(y,z) \rhob(y,z),
\qquad (y,z) \in \Bb
\]
which combined with
\[
(\rhobv_t + \rhobv_z)(y,z,z) = \frac{d}{dz} ( \rhob(y,z) ) = 0, \qquad (y,z) \in \Bb
\]
determines
\[
\rhobv_t(y,z,z) = \frac{1}{2} \int_{-\infty}^z  \Delta_y \rhob(y,s) - q(y,z) \rhob(y,s) \, ds, \qquad (y,z) \in \Bb.
\]
Let us define the smooth function
\[
g(y,z) := \frac{1}{2} \int_{-\infty}^z  \Delta_y \rhob(y,s) - q(y,z) \rhob(y,s) \, ds, \qquad (y,z) \in \R^n.
\]
Construct a smooth function $\chi$ on $(-\infty,0]$ with support in $[-1,0]$ such that 
\[
\chi(0)=0, ~~ \chi'(0)=1
\]
and define
\[
\ft_k(y,z,t) = \begin{cases} 
\rhobv(y,z,t), & \qquad (y,z,t) \in \Sigma_+, \\
\rhob(y) + k^{-1} \chi(k(t-z)) g(y,z), & \qquad (y,z,t) \in \Sigma_-. 
\end{cases}
\]
We note that $\ft_k \in C^1(\Sigma)$ because $\ft_k(y,z,z+) = \ft_k(y,z,z-)$ and
$\pa_t \ft_k (y,z,z+) = \pa_t \ft(y,z,z-)$ for $(y,z) \in S$. 
Further $\ft_k \to \ft$ in $H^1(\Sigma)$ because
\begin{align*}
\| k^{-1} \chi(k(t-z) g(y,z) \|_{H^1(\Sigma_-)}^2
& \cleq \frac{1}{k^2} \int_{\Sigma_-} \chi^2(k(t-z)) \, dS + \int_{\Sigma_-} \chi'(k(t-z))^2 \, dS
\\
& \cleq \frac{1}{k^2} + \int_{-1}^0 |\chi'(ks)|^2 \, ds
\\
& \cleq \frac{1}{k^2} + \int_{-1/k}^0 \, ds ~~ \to ~~ 0, \qquad \text{as $k \to \infty$}.
\end{align*}

Let $\rhot_k$ be the solution of the IBVP (\ref{eq:rhotde})-(\ref{eq:rhotbc}) except with $\ft$ replaced by $\ft_k$. 
Since
$\ft_k$ is in $C^1$ and $\rhot_k = \rhot$ on $ t \geq z$, by applying Theorem 3.1 in \cite{BY17} to $\rhot_k$ and 
$\pa_t \rhot_k$ one can show that $\rhot_k \in H^2(Q)$ and we have
\[
\|(\rhot - \rhot_k)(\cdot,\tau)\|_{H^1(B)} + \| \pa_t (\rhot - \rhot_k)(\cdot,\tau)\|_{L^2(B)}
+ \| \pa_\nu (\rhot - \rho_k) \|_{ L^2(\Sigma) }  \cleq 
\| \ft_k - \ft \|_{H^1(\Sigma)},
\]
for all $\tau \in [-T,-1]$.
So if we take $\rho_k$ to be the restriction of $\rhot_k$ to $Q_-$ then we have constructed the desired $\rho_k$. Note that
on $\Gamma$ we have $\rho_k = \rhot_k = \rhobv$ because $\rhot_k = \rhobv$ on $Q_+$ by a domain of dependence argument.
\end{proof}




\section{Proofs for the spherical and point source problem}


%
Our functions will be defined mostly over the region above $t=-|x|$ and we avoid points where $x=0$ so we define
\[
K = \{ (x,t) \in \R^3 \times \R : -|x|<t, ~ x \neq 0 \}.
\]
The following proposition will be crucial in the proof of Theorem \ref{thm:pointspherical}.
%
\begin{prop}[Main proposition for spherical and point source problem]\label{prop:sprop}
Suppose $p(x), q(x)$ are smooth functions on $\R^3$, supported in $\Bb$ and zero in neighborhood of the origin. 
Let $f(x,t)$ be a bounded function on $K$ and $\alpha(x,t)$ a continuous function on $K$ with 
$t$ sections of $\alpha$ compactly supported, $\alpha$ smooth on the subregions $t \geq |x|$, $t \leq |x|$ with
\begin{align}
(\Box + q) \alpha(x,t) = p(x) f(x,t), & \qquad (x,t) \in K, \qquad (\text{as distributions})
\label{eq:salphade}
\\
\alpha(x,|x|) = \int_0^1 p(sx)  \,ds, & \qquad x \neq 0,
\label{eq:salphacc}
\end{align}
and
$\alpha|_{S \times (-1,3)} =0$, $\pa_r \alpha|_{S \times (-1,3)}=0$; then $p=0$.
\end{prop}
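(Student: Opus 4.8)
The plan is to run the Bukhgeim--Klibanov--Carleman scheme of Proposition~\ref{prop:planeprop}, with the characteristic plane $\{t=z\}$ replaced by the forward light cone $\Gamma := \{(x,t) : t=|x|,\ 0<|x|\leq 1\}$, which is the surface carrying the data (\ref{eq:salphacc}). The first observation is that (\ref{eq:salphacc}) encodes $p$ as a ray integral: writing $x=r\theta$ with $r=|x|$ and substituting $\sigma=sr$ gives $r\,\alpha(r\theta,r)=\int_0^r p(\sigma\theta)\,d\sigma$, so differentiating in $r$ yields
\[
p(x) = \alpha(x,|x|) + |x|\,(\pa_r+\pa_t)\alpha(x,|x|), \qquad x\neq 0.
\]
Since $\pa_r+\pa_t$ differentiates along the generator $(x/|x|,1)$ of the cone, it is tangential to $\Gamma$, and we get the pointwise bound $|p|^2 \cleq |\alpha|^2 + |(\pa_r+\pa_t)\alpha|^2$ on $\Gamma$: the values of $p$ on $\Gamma$ are controlled by $\alpha$ and its tangential derivatives there. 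This is the precise analog of the flat identity $p(y,z)=(\pa_z+\pa_t)\alpha(y,z,z)$ used in subsection~\ref{subsec:planeprop}; moreover the projection $(x,|x|)\mapsto x$ carries $\Gamma$ onto $B\setminus\{0\}$, so a bound on $p$ over $\Gamma$ bounds $p$ over all of $B$.

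I would work on a bounded region $G\subset K$ whose interior contains $\Gamma$, bounded laterally by a face $\Sigma\subset\{|x|=1\}$, above by $t=T_2$ with $T_2\geq 3$, and below by the downward light cone $t=-|x|$, which is the boundary of $K$; set $G_+:=G\cap\{t\geq|x|\}$. On $\Sigma$ the hypotheses $\alpha|_{S\times(-1,3)}=\pa_r\alpha|_{S\times(-1,3)}=0$ give vanishing Cauchy data (note $\pa_r=\pa_\nu$ on $S$). A structural point --- and, I expect, the reason Theorem~\ref{thm:pointspherical} asserts only injectivity rather than a stability estimate --- is that the bottom face $t=-|x|$ is characteristic for $\Box$ and carries the trace of $\alpha$ coming from the incoming spherical wave (\ref{eq:swcc}); this trace is not measured and does not vanish, so it cannot be controlled and will instead be killed by letting the Carleman parameter tend to infinity. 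Since $p$ and $q$ vanish near the origin the cone trace $\alpha(x,|x|)$ vanishes there too, and I would excise a neighborhood of $x=0$ to keep $|x|$, the cone and the weight smooth.

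The core is a Carleman weight $\psi=e^{\lambda\phi}$ adapted to $\Gamma$: by analogy with Lemma~\ref{lemma:pweight} a function $\phi=\phi(|x|,t)$ strictly decreasing in $\bigl|t-|x|\bigr|$ for fixed $x$ (so that $t\mapsto\psi(x,t)$ is maximized exactly on $\Gamma$), the profile $\phi=c(a-|x|)^2-(t-|x|)^2$ being the natural candidate, chosen in addition so that $\psi$ on $\Gamma$ strictly exceeds its values on the top and bottom faces of $G$ --- the cone analog of the condition $T>6$, which is where the window $[-1,3]$ enters. With such a weight the argument mirrors subsection~\ref{subsec:planeprop}: for a temporal cutoff $\beta=\chi\alpha$ vanishing near the top and bottom, Tataru's estimate (Theorem~1 in \cite{Ta96}) gives $\sigma\|\beta\|_{1,\sigma,G}^2\cleq\|(\Box+q)\beta\|_{0,\sigma,G}^2+\sigma\|\beta\|_{1,\sigma,\Sigma}^2+\sigma\|\pa_\nu\beta\|_{0,\sigma,\Sigma}$, the commutators generated by $\chi$ are absorbed by cone analogs of Lemmas~\ref{lemma:neartet} and \ref{lemma:neartez}, and the lateral terms vanish. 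Using (\ref{eq:salphade}), $\|f\|_\infty<\infty$ and the cone identity, this yields
\[
\int_\Gamma e^{2\sigma\psi}|p|^2\,dS \cleq \int_G e^{2\sigma\psi}|p|^2 + e^{2\sigma c_0}\,D,
\]
where $D$ is the finite contribution of the uncontrolled top and bottom traces and $c_0<\min_\Gamma\psi$. A cone analog of Lemma~\ref{lemma:integral} gives $\int_G e^{2\sigma\psi}|p|^2\leq h(\sigma)\int_\Gamma e^{2\sigma\psi}|p|^2$ with $h(\sigma)\to 0$; absorbing this for large $\sigma$ and dividing by $e^{2\sigma\min_\Gamma\psi}$ leaves $\int_\Gamma|p|^2\,dS\cleq D\,e^{2\sigma(c_0-\min_\Gamma\psi)}$, which tends to $0$ as $\sigma\to\infty$. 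Hence $p=0$ on $\Gamma$, and therefore $p\equiv 0$.

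The step I expect to be the main obstacle is the weight construction: verifying strong pseudoconvexity of the cone-adapted $\phi$ with respect to $\Box$ while simultaneously arranging the separation between the values of $\psi$ on $\Gamma$ and on the top and bottom faces. Unlike the flat computation of Lemma~\ref{lemma:pweight}, the level sets of $|x|$ are curved spheres, so the angular operators $\Omega_{ij}$ enter the symbol and the Hessian of $\phi$ acquires curvature terms of size $1/|x|$; controlling them calls for the decomposition (\ref{eq:fangular}) and for staying a fixed distance from the origin (legitimate since $p,q$ vanish there), and it is especially delicate near that excised region, where $c(a-|x|)^2$ is largest on the downward cone. The attendant cone energy estimates (the analogs of Lemmas~\ref{lemma:neartet} and \ref{lemma:neartez}) are a secondary difficulty because $\Gamma$ and the lower boundary are characteristic for $\Box$, but they reduce to multiplier identities integrated over $G_+$, much as in Section~\ref{sec:plemma}, once the origin has been removed.
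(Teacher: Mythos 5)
Your overall architecture is exactly the paper's: the cone identity $p(r\theta)=\frac{d}{dr}(r\alpha(r\theta,r))$, excision of a neighborhood of the origin, a cutoff $\chi$ killing the top, the bottom cone and the inner cylinder, Tataru's estimate from \cite{Ta96}, a weight comparison, and $\sigma\to\infty$. But the one ingredient you leave unverified --- the weight --- is precisely where the proposal breaks, and it breaks irreparably for the candidate you propose. Your $\phi=c(a-|x|)^2-(t-|x|)^2$ is \emph{not} pseudoconvex with respect to $\Box$: by Lemma \ref{lemma:scarl}, at points of the cone $t=r$ (where $\phi_t=0$, so the constraint $|\phi_t|\le|\phi_r|$ is active) the pseudoconvexity condition reduces to
\[
\phi_r^2\,\bigl(r\,\phi_{tt}+\phi_r\bigr)>0 ,
\]
and your weight has $\phi_{tt}=-2<0$ together with $\phi_r=-2c(a-r)<0$ (since $r\le 1<a$, $c>0$), so the condition fails at \emph{every} point of the central surface $\Gamma$. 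This is not a checkable detail you omitted but a structural obstruction: the radial condition of Lemma \ref{lemma:scarl} contains the curvature term $\phi_r(\phi_r^2-\phi_t^2)$, which forces $\phi_r>0$ on the cone, i.e.\ the weight must \emph{increase} in $r$. This is a genuine difference from the flat case of Lemma \ref{lemma:pweight}, where $c(a-z)^2$ decreasing in $z$ is admissible; the analogy you invoke is exactly the step that does not transfer.

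The separation requirement fails for your weight as well, for the reason you yourself flag but do not resolve: on the bottom cone and on the inner cylinder $\ep\le|x|\le 2\ep$ one has $\phi\approx c(a-2\ep)^2\approx ca^2$, which \emph{exceeds} the minimum of $\phi$ on the upper cone over $3\ep\le r\le 1$, namely $c(a-1)^2$ attained at $r=1$. Hence there is no constant $c_0<\min_\Gamma\psi$ dominating the cutoff-commutator terms, and your final display $\int_\Gamma|p|^2\,dS\cleq D\,e^{2\sigma(c_0-\min_\Gamma\psi)}$ cannot be reached. The paper's resolution is the weight $\phi(x,t)=a|x|^2-(t-|x|)^2$ with $4<a<4(1-9\ep^2)^{-1}$: it is increasing in $r$, which restores pseudoconvexity (Lemma \ref{lemma:sweight} requires $a>4$), and its minimum on the cone sits at the \emph{inner} radius $3\ep$, with value $9a\ep^2$; the upper bound $a<4(1-9\ep^2)^{-1}$ guarantees $9a\ep^2>a-4$, which dominates the values on the top $t=3$ and on the bottom cone, while the cylinder $|x|\le 2\ep$ contributes at most $4a\ep^2$. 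With that weight the rest of your scheme goes through essentially as written (and as in the paper): since $\beta=\chi\alpha$ is compactly supported, Tataru's estimate is applied without boundary terms, and only the cone energy estimate (Lemma \ref{lemma:neartex}, your analog of Lemma \ref{lemma:neartez}) is needed --- no analog of Lemma \ref{lemma:neartet} enters, because in this uniqueness-only setting the commutator terms are simply bounded functions weighted by $e^{2\sigma(c-\delta)}$.
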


%
%

\subsection{Proof of Theorem \ref{thm:pointspherical}}
\begin{figure}[h]
\begin{center}
\epsfig{file=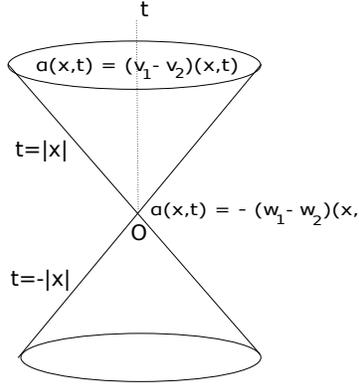, height=2.0in}
\end{center}
\caption{Definition of $\alpha$}
\label{fig:vtilde}
\end{figure}

Suppose $q_i(x)$, $i=1,2$, are smooth functions on $\R^3$ with support in $\Bb$ and zero in a neighborhood of the
origin. Let $v_i$ and $w_i$, $i=1,2$, be the functions, corresponding to $q=q_i$, whose existence and uniqueness is guaranteed
by  Propositions \ref{prop:sVwell} and \ref{prop:sWwell}. Then
\begin{align*}
(\Box + q_1) ( v_1 - v_2) = -(q_1-q_2) v_2, & \qquad t \geq |x|, ~ (x,t) \in \R^3 \times \R,
\\
(v_1-v_2)(x,|x|) = - \frac{1}{8 \pi} \int_0^1 (q_1-q_2)(sx) \, ds, & \qquad x \in \R^3,
\end{align*}
and
\begin{align*}
(\Box + q_1) ( w_1 - w_2) = -(q_1-q_2) w_2, & \qquad -|x| \leq t < |x|, ~ (x,t) \in \R^3 \times \R,
\\
(w_1-w_2)(x,-|x|) = - \frac{1}{8 \pi} \int_1^\infty (q_1 -q_2)(sx) \, ds, & \qquad x \in \R^3, ~ x \neq 0,
\\
(w_1 - w_2)(x,t)=0, & \qquad -|x| \leq t \leq -1.
\end{align*}
For $(x,t) \in K$, define 
\[
\vt_2(x,t) := \begin{cases} v_2(x,t), & t > |x| >0, \\ - w_2(x,-t), & -|x|<t < |x|, \end{cases}
\]
and
\[
\alpha(x,t) := \begin{cases} (v_1-v_2)(x,t), & \qquad t \geq |x|>0, \\
 - (w_1-w_2)(x,-t), & \qquad -|x|<t \leq |x|. \end{cases}
\]
Note that $\alpha$ has smooth extensions to the regions $t \geq |x|>0$, $-|x|<t \leq |x|$ and
\beqn
\alpha(x,t) =0, \qquad \text{when} ~~ 1 \leq t \leq |x|.
\label{eq:arzero}
\eeqn
because of (\ref{eq:swic}).
Also for $x \neq 0$
\begin{align}
\alpha(x,|x|+)  - \alpha(x,|x|-) 
& = - \frac{1}{8 \pi} \int_0^1 (q_1-q_2)(sx) \, ds - \frac{1}{8 \pi} \int_1^\infty (q_1-q_2)(sx) \, ds
\nn \\
& = - \frac{1}{8 \pi} \int_0^\infty (q_1-q_2)(sx) \, ds 
= - \frac{1}{8 \pi r} \int_0^\infty (q_1-q_2)(s \theta).
\label{eq:sacc1}
\end{align}
Now
\[
\Box \left ( \frac{H(t-|x|)}{|x|} \right ) =0, \qquad \text{for} ~ (x,t) \in K
\]
so, for $(x,t) \in K$, we have
\begin{align*}
\Box ( H(t-|x|) ) & = \Box \left ( |x| \, \frac{H(t-|x|)}{|x|} \right )
\\
& = \Box (|x|) \, \frac{H(t-|x|)}{|x|} - 2 \nabla (|x|) \cdot \nabla \left ( \frac{H(t-|x|)}{|x|} \right )
\\
& = - \frac{2}{r} \, \frac{ H(t-r)}{r} - 2 \frac{x}{r} \cdot \left ( - \frac{x}{r^3} H(t-r) - \frac{x}{r^2} \delta(t-r) \right )
\\
& = \frac{2}{r} \delta(t-r)
\end{align*}
and, again on $K$,
\begin{align*}
\Box ( H(|x|-t) ) = \Box (1 - H(t-|x|)) = - \frac{2}{r} \delta(t-r).
\end{align*}
Taking smooth extensions of $(v_1-v_2)(x,t)$ and $(w_1-w_2)(x,-t)$ to $K$, for $(x,t) \in K$ we have 
\begin{align*}
(\Box + q_1) \alpha(x,t) 
& = (\Box + q_1) ( (v_1-v_2)(x,t) H(t-|x|) - (w_1-w_2)(x,-t) H(|x|-t))
\\
& = -(q_1 - q_2)\, \vt_2 + (v_1-v_2)(x,t) \Box ( H(t-|x|)) - (w_1-w_2)(x,-t) \Box ( H(|x|-t) )
\\
& \qquad 
+ 2\delta(t-|x|) (\pa_t + \theta \cdot \nabla) (v_1-v_2)(x,t)
-2 \delta(|x|-t) ( ( \pa_t - \theta \cdot \nabla) (w_1-w_2) )(x,-t)
\\
& =  -(q_1 - q_2) \, \vt_2 + \frac{2}{r} \delta(t-r) ( (v_1-v_2)(r \theta,t) + (w_1-w_2)( r \theta,-t) )
\\
& \qquad + 2 \delta(t-r) ( \pa_t +  \theta \cdot \nabla)(v_1-v_2)(r \theta,r)
- 2 \delta(t-r) ( (  \pa_t - \theta \cdot \nabla) (w_1-w_2) ) (r \theta,-r)
\\
& =  -(q_1 - q_2) \, \vt_2 + \frac{2}{r} \delta(t-r) ( (v_1-v_2)(r \theta,r) + (w_1-w_2)(r \theta,-r) )
\\
& \qquad + 2 \delta(t-r)  \frac{d}{dr} ( (v_1-v_2)(r \theta, r) ) 
+ 2 \delta(t-r) \frac{d}{dr} ( (w_1-w_2)(r \theta, -r) )
\\
& =  -(q_1 - q_2) \, \vt_2 + \frac{2}{r} \delta(t-r) \frac{d}{dr} ( r \alpha(r \theta, r+) - r \alpha(r \theta, r-) )
\\
& =  -(q_1 - q_2) \, \vt_2,
\end{align*}
because of (\ref{eq:sacc1}).
Summarizing, $\alpha$ is smooth on the regions $t \geq |x|>0$ and $-|x|<t \leq |x|$ with
\begin{align}
(\Box+q_1) \alpha = - (q_1-q_2) \vt_2, & \qquad (x,t) \in K,
\label{eq:salphadede}
\\
\alpha(x,|x|+) = - \frac{1}{8 \pi} \int_0^1 (q_1 - q_2)(sx) \, ds, & \qquad x \neq 0,
\label{eq:salphacccc}
\\
\alpha(x,|x|+) - \alpha(x,|x|-) = -\frac{1}{8 \pi} \int_0^\infty (q_1 - q_2)(sx) \, ds,
& \qquad x \neq 0,
\label{eq:salphajj}
\\
\alpha(x,t)=0, & \qquad 1 \leq t \leq |x|.
\label{eq:safc}
\end{align}

If the $q_1, q_2$ are such that
\beqn
v_1|_{S \times [1,3]} = v_2|_{S \times [1,3]},
\qquad 
w_1|_{S \times [-1,1)} = w_2|_{S \times [-1,1)};
\label{eq:svwequal}
\eeqn
then $\alpha|_{S \times (-1,3]} =0$ and we show that $q_1=q_2$. 

Firstly, we claim $\alpha$ is continuous across $t=|x|$ on $K$. 
Since $v_1(x,t) = v_2(x,t)$ on $S \times [1,3]$, we have $v_1(x,|x|) = v_2(x,|x|)$ for all $x \in S$. Hence
\[
\int_0^1 q_1(sx) \, ds = \int_0^1 q_2(sx) \, ds, \qquad \forall x \in S,
\]
and since the $q_i$ are supported in $|x| \leq 1$, we have 
\[
\int_0^\infty (q_1-q_2)(sx) \, dx = 0, \qquad \forall x \in \R^3, ~ |x| = 1,
\]
which implies
\[
\int_0^\infty (q_1-q_2)(sx) \, dx = 0, \qquad \forall x \in \R^3
\]
and hence, from (\ref{eq:salphajj}), the jump in $\alpha$ across $t=|x|$ is 0.

Summarizing, $\alpha$ is smooth on the regions $t \geq |x|$ and $-|x|<t \leq |x|$, continuous across $t=|x|$ and 
satisfies
(\ref{eq:salphadede}), (\ref{eq:salphacccc}), with $\alpha|_{S \times (-1,3]} =0$. So, from Lemma 
\ref{lemma:sexterior},
we have $\pa_\nu \alpha|_{S \times (-1,3]}=0$, hence $q_1=q_2$ from Proposition \ref{prop:sprop}. Note the hypothesis of 
Lemma \ref{lemma:sexterior} holds because of (\ref{eq:salphadede}) and (\ref{eq:safc}).

%
%

\subsection{Proof of Proposition \ref{prop:sprop}}\label{subsec:sprop}

We define $K := \{ (x,t) \in \R^3 \times \R : -|x|<t \}$,
\[
Q := \{ (x,t) \in \R^3 \times \R : \ep \leq |x| \leq 1,\,  -|x| < t \leq 3. \}, \qquad 
Q_{+} := Q \cap \{ t \geq |x| \}.
\]
Suppose $p,q$ are supported in $\Bb$ and zero on $|x| \leq 3 \ep$, $\ep$ small.
Choose an $a$ between $4$ and $4(1- 9\ep^2)^{-1}$ and define
\[
\phi(x,t) = a|x|^2 - (t-|x|)^2, \qquad (x,t) \in \R^3 \times \R;
\]
from Lemma \ref{lemma:sweight} we know that 
\[
\psi = e^{\lambda \phi}
\]
is strongly pseudoconvex w.r.t $\Box$ in a neighborhood of $Q$ for a large enough $\lambda$.
For convenience, at times, we use the expressions $\phi(r,t)$ and $\psi(r,t)$ instead of 
$\phi(x,t)$ and $\psi(x,t)$; here $r=|x|$.


We claim (see Figure \ref{fig:chi}), that the smallest value of $\phi$ on the set $\{ (x,|x|) : 3 \ep \leq |x| \leq 1 \}$ is larger than the largest value 
of $\phi$ on 
\[
 \{(x,t) : |x| \leq 1, ~ t=3 \} \cup \{ (x,-|x|) : |x| \leq 1 \} \cup \{(x,t) : |x| \leq 2 \ep, ~ t \in \R \}.
\]
On $t=r$, $3\ep \leq r \leq 1$, the smallest value of $\phi$ is $9 a \ep^2$. 
On $t=3$, $0 \leq r \leq 1$, the largest value of $\phi$ is $a - 4$. 
On $t=-r$, the largest value of $\phi$ is $a-4$.
On $0 \leq r \leq 2 \ep$, $t \in \R$, the largest value of $\phi$ is bounded above by $4a \ep^2$. Hence our claim is proved because we chose $a$ between $4$ and $4/(1-9 \ep^2)$.

So we can find $\delta>0$, $c \in \R$ and a small $l$ in $(0, \ep)$ such that (see Figure \ref{fig:chi})
\vspace{-0.2in}
\begin{itemize}
\item $\psi \leq c - \delta$ on the set 
\[
\Bb \times [3-l,3] \cup \{ (x,t) : 0 \leq |x| \leq 1,  0 \leq t+r \leq l \} \cup 
\{ (x,t) : |x| \leq 2 \ep, -1 \leq t \leq 3 \};
\]
\item $\psi \geq c+\delta$ on $t=r$, $3\ep \leq r \leq 1$.
\end{itemize}
\vspace{-0.2in}
Choose $\chi(x,t)$, a compactly supported smooth function on $\R^3 \times \R$ such that 
$\chi$ is $1$ near $t=|x|$ and $0$ on the parts of $\pa Q$ where we do not have information. More specifically,
we construct a compactly supported smooth function $\chi$ such that (see Figure \ref{fig:chi})
\vspace{-0.2in}
\begin{itemize}
\item $\chi(x,t) =1$ on a neighborhood of $\{ (x,t) : t=r,  ~3 \ep \leq r \leq 1 \}$;
\item $\chi(x,t) =0$ when $|x| \leq \ep$ or when $t \geq 3 -l/2$;
\item $\chi(x,t) =0$ on $\{ (x,t): 0 \leq t+r \leq l/2, 0 \leq r \leq 1 \}$;
\item $\nabla_{x,t} \chi$ is non-zero only when $ \ep \leq |x| \leq 2 \ep$ or $ 3 -l \leq t \leq 3 - l/2$ or when
$l/2 \leq r+t \leq l$.
\end{itemize}
\begin{figure}
\begin{center}
\epsfig{file=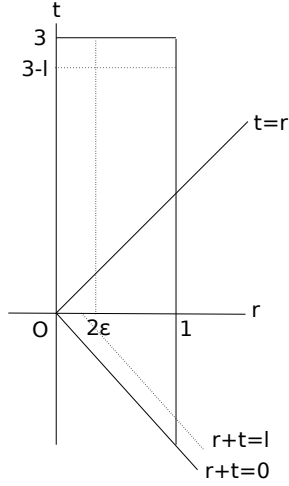, height=2.5in}
\end{center}
\caption{Construction of $\chi$}
\label{fig:chi}
\end{figure}
Define
\[
\beta(x,t) := \chi(x,t) \alpha(x,t), \qquad (x,t) \in K;
\]
$\beta$ has the same regularity properties as $\alpha$, $\beta$ and $\beta_r$ are zero on $|x|=\ep$ by construction
and $\beta$ and $\beta_r$ are zero on $S \times (-1,3]$ because of the hypothesis on $\alpha$. Further, 
$\beta, \beta_t$ are zero on $t=3$ and $\beta$ is zero in a neighborhood of $t=-r$ because of $\chi$.
Since $\beta \in H^1(K)$ is compactly supported and $\psi$ is strongly pseudoconvex w.r.t $\Box$ in a 
neighborhood of $Q$, from 
Theorem 1 in \cite{Ta96}, we have
\beqn
\sigma \int_Q e^{2 \sigma \psi} ( |\nabla_{x,t} \beta |^2 + \sigma^2 \beta^2 )
\cleq
\int_Q e^{2 \sigma \psi} |\L \beta|^2,
\label{eq:sLbeta}
\eeqn
for large $\sigma$. Here and below $\mathcal{L} = \Box + q$.

For $r \in [3 \ep, 1]$, using (\ref{eq:salphacc}), and that $\chi$ is $1$ in a neighborhood of $t=r$ with 
$3 \ep \leq r \leq 1$, we have
\begin{align*}
p(r \theta) & = \frac{d}{dr} \int_0^r p(s \theta) \, ds = \frac{d}{dr} \left ( r \int_0^1 p(rs \theta) \, ds \right ) 
\\
& = \frac{d}{dr} ( r \alpha( r \theta, r) ) = \frac{d}{dr} ( r \beta( r \theta, r) ) 
\\
& = (r \theta \cdot \nabla \beta + r\beta_t + \beta)(r \theta,r)
\\
& = (r (\beta_r + \beta_t) + \beta )(r \theta, r).
\end{align*}
Hence, using the support of $p$ and Lemma \ref{lemma:neartex} we have
\begin{align}
\int_B e^{2 \sigma \psi(x,|x|) } |p(x)|^2 \, dx
& =  \int_{B \setminus B_\ep}  e^{2 \sigma \psi(x,|x|) } |p(x)|^2 \, dx
\nn
\\
& \cleq  \int_{B\setminus B_\ep} e^{2 \sigma \psi(x,|x|)} ( |\beta_r + \beta_t)|^2 + \beta^2)(x,|x|) \, dx
\nn
\\
& \cleq \sigma \|\beta\|_{1,\sigma, Q_+}^2 + \|\L \beta \|_{0,\sigma, Q_+}^2
\\
& \cleq \int_Q e^{2 \sigma \psi} |\L \beta|^2,
\label{eq:stemp25}
\end{align}
with the last inequality a consequence of (\ref{eq:sLbeta}).
Noting that $\alpha$ is continuous across $t=|x|$ 
and smooth on each side of $t=|x|$ one may verify using a test function that
\begin{align*}
\L \beta &= \L ( \chi \alpha )  = \chi \L \alpha + \alpha \Box \chi + 2 ( \chi_t \alpha_t - \nabla \chi \cdot \nabla \alpha).
\end{align*}
So using the hypothesis of the proposition, we have
\[
|\L \beta| \cleq \chi \lvert p f \rvert + h \qquad \text{on} ~ Q
\]
for a bounded function $h$ on $Q$ with support on the region where $\nabla_{x,t} \chi$ is non-zero;
hence $\psi \leq c-\delta$ on the support of $h$. So, in (\ref{eq:stemp25}), using the function 
$g(\sigma)$ from Lemma \ref{lemma:sintegral}, we obtain
\begin{align*}
\int_B e^{2 \sigma \psi(x,|x|) } |p(x)|^2 \, dx
& \cleq \int_Q e^{2 \sigma \psi(x,t)} |p(x)|^2 \, dx \, dt + e^{2 \sigma (c-\delta)}
\\
& \cleq \int_{\ep \leq |x| \leq 1} e^{2 \sigma \psi(x,|x|)} |p(x)|^2 \int_{-1}^3 e^{ \psi(x,t) - \psi(x,|x|)} \, dt \, dx
+ e^{2 \sigma (c-\delta)}
\\
& \cleq g(\sigma) \int_B e^{2 \sigma \psi(x,|x|)} |p(x)|^2 + e^{2 \sigma (c-\delta)}.
\end{align*}
Hence, from Lemma \ref{lemma:sintegral}, for large enough $\sigma$ we have
\[
\int_B e^{2 \sigma \psi(x,|x|) } |p(x)|^2 \, dx
\cleq e^{2 \sigma (c-\delta)}.
\]
Since $\psi(x,|x|) \geq c+\delta$ on $3 \ep \leq |x| \leq 1$ and $p(x)$ is supported in $3 \ep \leq |x| \leq 1$, we obtain
\[
e^{2\sigma(c+\delta)} \int_B |p(x)|^2 \, dx 
\cleq e^{2 \sigma (c-\delta)}
\]
which is equivalent to 
\[
\int_B |p(x)|^2 \, dx \cleq e^{-4 \sigma \delta},
\]
for large enough $\sigma$. So letting $\sigma \to \infty$ we conclude that
\[
\int_B |p(x)|^2 \, dx =0
\]
and hence $p=0$ on $B$.

%
%

\subsection{Proof of Proposition \ref{prop:sWwell}}

Choose a $\chi(x) \in C^\infty(\R^3)$ which is supported in $|x| \geq 1/4$ and $1$ on $|x| \geq 1/2$. 
We seek $W(x,t)$ in the form
\[
W(x,t) = \chi(x) \, \frac{ \delta(t+|x|)}{4 \pi |x|} + \Wbv(x,t)
\]
so we need to prove the well-posedness of the inhomogeneous IVP
\begin{align}
(\Box + q) \Wbv = F(x,t), & \qquad (x,t) \in \R^3 \times \R,
\label{eq:Wbvde}
\\
\Wbv(x,t)=0, & \qquad  t< -1,
\label{eq:Wbvic}
\end{align}
where
\beqn
F(x,t) = - (\Box +q) \left ( \chi(x) \, \frac{ \delta(t+|x|)}{4 \pi |x|} \right ).
\label{eq:sFdef}
\eeqn
Later we show that $F(x,t)=0$ for $t<-1$ and $F(x,t) \in L^2 ( \R, H^s(\R^3) )$ for all $s<-7/2$.
So, from Theorems 9.3.1, 9.3.2 and the remark after that in \cite{Ho76}, we conclude
that (\ref{eq:Wbvde}), (\ref{eq:Wbvic}) has a unique solution in the class of functions which are locally 
in $H^1( \R, H^s(\R^3))$, $s<-7/2$.

Next we address the regularity of $W(x,t)$. We have
\[
W(x,t) = \frac{ \delta(t+|x|)}{4 \pi |x|}, \qquad t<-1
\]
and, for $t<-1$, the wave front set of $W$ is
\begin{align*}
\text{WF} \left ( \frac{ \delta(t+|x|)}{4 \pi |x|} \right )
&  \subset \Gamma =  \{ (x, -|x|:  \sigma x, \sigma |x| ) \; :  ~ x \in \R^3, ~ |x| \geq 1, ~ \sigma \in \R \}.
\end{align*}
Since $q$ is smooth and $(\Box +q )W=0$ on $\R^3 \times \R$, from H\"ormander's propagation of singularities theorem
(Theorem 2.1 in Chapter 6 of \cite{Ta81}), the wave front set of $W$ is invariant w.r.t the bicharacteristic flow 
associated with $\Box$, hence singularities of $W$ are preserved along rays of $\Box$. 
Further, for $t<-1$, the singularities of $W$ must lie on $t=-|x|$. Since the $x,t$ rays are lines which make a 45 degree
angle with lines parallel to the $t$ axis, the only $x,t$ rays which lie on 
$t=-|x|$ for $t<-1$ are those which lie on the cone $t^2= |x|^2$. Hence the singularities of $W$ lie on $t^2=|x|^2$ only.


Since $W(x,t)$ is supported on $t = -|x|$ when $t<-1$, from a domain of dependence argument we see that $W$ is 
supported in $ t \geq -|x|$. On the region $t<|x|$, we seek $W(x,t)$ in the form
\[
W(x,t) = \frac{1}{4 \pi} \frac{ \delta(t+|x|)}{4 \pi |x|} + w(x,t) H(t+|x|)
\]
where $w(x,t)$ will be a smooth function on $-|x| \leq t < |x|$.

First (\ref{eq:sWic}) forces $w(x,t)=0$ for $-|x| \leq t<-1$. Next, since 
\[
\Box \, \left (  \frac{\delta(t+|x|)}{4 \pi |x|}  \right ) 
 = 0 \qquad \text{when} ~ |x|<t,
\]
(\ref{eq:sWde}) forces
\beqn
(\Box + q) (w(x,t) H(t+|x|) ) = - q(x) \, \frac{\delta(t+|x|)}{4 \pi |x|} \qquad \text{when} ~ t<|x|.
\label{eq:swwant}
\eeqn
When $x \neq 0$ we have (below $n=3$) 
\begin{align*}
\nabla( w(x,t) H(t+|x|))  & = \frac{x}{|x|} w(x,t) \delta(t+|x|) + (\nabla w)(x,t) H(t+|x|),
\\
\Delta ( w(x,t) H(t+|x|)) & = w(x,t) \delta'(t+|x|) + 2 \frac{ x}{|x|}  \cdot \nabla w(x,t) \, \delta(t+|x|) 
\\
& \qquad + \frac{n-1}{|x|} w(x,t) \delta(t+|x|) + (\Delta w)(x,t) H(t+|x|),
\\
 \pa_t ( w(x,t) H(t+|x|) ) & = w(x,t) \delta(t+|x|) + w_t (x,t) H(t+|x|),
 \\
 \pa_t^2 ( w(x,t) H(t+|x|) ) & = w(x,t) \delta'(t+|x|) + 2w_t (x,t) \delta(t+|x|) + w_{tt}(x,t) H(t+|x|).
\end{align*}
Hence, for $x \neq 0$
\[
(\Box + q) (w(x,t) H(t+|x|) ) = 2 \left ( w_t - \frac{x}{|x|} \cdot \nabla w - \frac{w}{|x|} \right  ) \delta(t+|x|)
+ (\Box + q)(w(x,t) ) H(t+|x|).
\]
Using this in (\ref{eq:swwant}) we see that we need to find a smooth $w(x,t)$ on $-|x| \leq t < |x|$ such that
\beqn
(\Box + q) w =0 \qquad \text{on} ~ -|x| \leq t < |x|
\label{eq:wwwde}
\eeqn
and
\beqn
(-|x| w_t + x \cdot \nabla w + w)(x,-|x|) =  \frac{q(x)}{8 \pi}, \qquad x \neq 0.
\label{eq:swtemp1}
\eeqn
Now (\ref{eq:swtemp1}) may be written as 
\[
\frac{d}{dr}( r w(r \theta, -r)) =  \frac{ q(r \theta)}{8 \pi}, \qquad r>0, ~ |\theta|=1,
\]
which combined with $w(r \theta,t)=0$ for $t<-1$ implies
\[
r w(r \theta, -r) = - \frac{1}{8 \pi} \int_r^\infty q(s \theta) \, ds, \qquad r>0, ~ |\theta|=1
\]
and hence
\beqn
w(x,-|x|) = - \frac{1}{8 \pi} \int_1^\infty q(sx) \, ds, \qquad x \neq 0.
\label{eq:wwwcc}
\eeqn
The existence and uniqueness of a smooth $w(x,t)$ on $-|x| \leq t < |x|$ satisfying (\ref{eq:wwwde}), (\ref{eq:wwwcc})
and $w(x,t)=0$ on $-|x| \leq t < -1$ is proved in \cite{Ba18}.


We now prove the earlier claim that the $F(x,t)$ defined by (\ref{eq:sFdef}) is in $ L^2(\R, H^s(\R^3))$ for all 
$s<-7/2$. 
From Theorem 7.3.1 in \cite{FJ98} we have
\[
\Box \left ( \frac{ \delta(t+|x|)}{4 \pi |x|}  \right ) =\delta(x,t),
\]
and since $\chi(x)$ supported in $|x| \geq 1/4$, $\nabla \chi(x)$ is supported in $ 1/4 \leq |x| \leq 1/2$, 
$q(x)$ is supported in $|x| \leq 1$, we conclude that
\begin{align*}
F(x,t)  & =  \frac{\delta(t+|x|)}{4 \pi |x|} \, (\Delta + q) (\chi(x) ) - 
2 \nabla \chi(x) \cdot \nabla \left (  \frac{ \delta(t+|x|)}{4 \pi |x|}  \right )
\\
& = a(x) \delta(t+|x|) + b(x) \, \delta'(t+|x|)
\end{align*}
where $a(x), b(x)$ are smooth functions supported in $1/4 \leq |x| \leq 1$.
Now, for $(\xi,\tau) \in \R^3 \times \R$, the Fourier transform of $b(x) \delta'(t+|x|)$ is
\begin{align*}
\F ( b(x) \, \delta'(t+|x|) )(\xi, \tau)
& =  \int_{\R^3 \times \R} b(x) \delta'(t+|x|) \, e^{-i x \cdot \xi - i  t \tau} \, dx \, dt
\\
& =  i \tau \int_{\R^3} b(x) \, e^{-i x \cdot \xi} \,  e^{i |x| \tau} \, dx,
\end{align*}
implying
\[
| \F (  b(x) \, \delta'(t+|x|) ) (\xi, \tau) | \leq C |\tau|. \qquad \text{(used when $\tau$ small)}
\]
Further, for $\tau \neq 0$, we have
\begin{align*}
\F ( b(x) \, \delta'(t+|x|) )(\xi, \tau) 
& = - \frac{\tau}{\tau^2} \int_{\R^3} b(x)  e^{-i x \cdot \xi} \,  \pa_r^2 ( e^{i |x| \tau}  ) \, dx
\\
& = - \frac{1}{\tau} \int_{\R^3} r^{-2}  \pa_r^2 ( r^2 b(x)  e^{-i x \cdot \xi} )  \; e^{i |x| \tau}   \, dx
\end{align*}
implying
\[
| \F (  b(x) \, \delta'(t+|x|) ) (\xi, \tau) | \leq C \,  \frac{ (1+|\xi|)^2}{|\tau|}. \qquad \text{(used when $\tau$ not small)}
\]
Hence, for $s<0$, using the above upper bounds we have
\begin{align*}
\| b(x) \, \delta'(t+|x|)   \|_{L^2(\R, H^s(\R^3))}^2
& = \int_\R \int_{\R^3} (1+|\xi|^2)^{s} |\F ( b(x) \, \delta'(t+|x|) )(\xi, \tau)|^2 \, d \xi \, d \tau
\\
&= \left ( \int_{-1}^1 \int_{\R^3} + \int_{|\tau| \geq 1} \int_{\R^3} \right )
(1+|\xi|^2)^{s} |\F ( b(x) \, \delta'(t+|x|) )(\xi, \tau)|^2 \, d \xi \, d \tau 
\\
& 
\cleq \int_{-1}^1 \tau^2 \int_{\R^3} (1+|\xi|^2)^{s} \, d \xi \, d \tau 
+ \int_{|\tau| \geq 1} \int_{\R^3} (1+|\xi|^2)^{s}  \frac{ (1+|\xi|^2)^2}{\tau^2} \, d \xi \, d \tau 
\\
& \cleq \int_{\R^3} (1+|\xi|^2)^{s} \, d \xi  + \int_{\R^3} (1+|\xi|^2)^{s}  (1+|\xi|^2)^2 \, d \xi
\\
& \cleq \int_0^\infty r^2 (1+r^2)^s \, dr + \int_0^\infty r^2 (1+r^2)^{s+2} \, dr
\end{align*}
is finite if $ 2s+6<-1$, that is if $s<-7/2$. Hence $a(x) \delta'(t+|x|) \in L^2(\R,H^s(\R^3))$ if $s<-7/2$. Using
a similar argument one may show that $b(x) \delta(t+|x|) \in L^2(\R,H^s(\R^3))$ for at least $s<-7/2$.




\section{Lemmas for the spherical and point source problem}\label{sec:slemma}


For this section, define $K := \{(x,t) \in \R^3 \times \R : -|x| < t, ~ x \neq 0 \}$, for $\ep>0$ we define 
\begin{gather*}
B_\ep := \{ x \in \R^3 : |x| < \ep \}, ~~
Q := \{ (x,t) \in \R^3 \times \R : \ep \leq |x| \leq 1,\,  -|x| < t \leq 3. \},
~~
\Sigma := S \times (-1,3],
\end{gather*}
and, for $\tau>0$, we define
\begin{align*}
Q_{+} := Q \cap \{ t \geq |x| \}, &   \qquad Q_-:= Q \cap \{ t \leq |x| \}, & 
Q_{+,\tau}= Q_+ \cap \{  t \leq \tau \},
\\
\Sigma_+ := \Sigma \cap \{ t \geq |x| \}, &  \qquad \Sigma_- := \Sigma \cap \{ t \leq |x| \}.
& 
\end{align*}

%
%

\subsection{Energy estimates for the spherical and point source problem}\label{subsec:senergy}

We derive a weighted energy estimate on $t=r$ and an energy estimate for the exterior problem, the first needed in
the proof of Proposition \ref{prop:sprop} and the second needed in the proof of Theorem \ref{thm:pointspherical}.

\def\vt{{\tilde{v}}}

\begin{lemma}[Energy estimate on $t=|x|$]\label{lemma:neartex}
If $q$ is a smooth function on $\R^3$ with support in $\Bb$ 
and $\beta, \psi$ are smooth functions on $t \geq |x|>0$ such that
$\beta$, $ \pa_r \beta$ are zero when $|x|=\ep$ ($\ep>0$ is small) or when $|x|=1$,  then 
\[
\int_{B \setminus B_\ep} e^{\sigma \psi(x,|x|)} 
\left ( ( \beta_t + \beta_r)^2 
+ \frac{1}{2 r^2} \smash{\sum_{ij} }(\Omega_{ij} \beta)^2 + \sigma^2 \beta^2 
\right  )(x,|x|) \, dx
\cleq
\sigma  \|\beta\|_{1,\sigma, Q_+}^2
+ \| (\Box+q) \beta \|_{0, \sigma, Q_+}^2
\]
for all $\sigma>0$ large enough, with the constant dependent only on $\ep$, $\|q\|_\infty$ and $\|\psi\|_{C^2(\ol{Q}_+)}$.
\end{lemma}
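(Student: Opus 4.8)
The plan is to run the multiplier argument of Lemma~\ref{lemma:neartez}, replacing the plane $t=z$ by the cone $t=|x|$, the direction $\pa_t+\pa_z$ by $\pa_t+\pa_r$, and the tangential derivatives $\nabla_y$ by the angular derivatives $\Omega_{ij}$ via (\ref{eq:fangular}). Write $\mathcal L=\Box+q$ and set $\mu:=e^{\sigma\psi}\beta$, which is smooth on $\ol{Q}_+$. For $\tau\in[1,3]$ introduce the slice energy
\[
E(\tau):=\int_{\{t=\tau\}\cap Q_+}(\mu_t^2+|\nabla\mu|^2+\sigma^2\mu^2)\,dx
\]
and the cone energy
\[
J:=\int_{\ep\leq|x|\leq1}\Big((\mu_t+\mu_r)^2+\tfrac{1}{2r^2}\sum_{ij}(\Omega_{ij}\mu)^2+\sigma^2\mu^2\Big)(x,|x|)\,dx,
\]
which by (\ref{eq:fangular}) has integrand $\mu_t^2+|\nabla\mu|^2+2\mu_t\mu_r+\sigma^2\mu^2$ on the cone.

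First I would integrate the identity (\ref{eq:pmuiden}) over $Q_{+,\tau}$ and apply the divergence theorem to the energy vector field $(-2\mu_t\nabla\mu,\ \mu_t^2+|\nabla\mu|^2+\sigma^2\mu^2)$. For $\tau\in[1,3]$ the boundary of $Q_{+,\tau}$ is the top slice $\{t=\tau\}$, contributing $E(\tau)$; the full cone $t=|x|$ over $\ep\leq|x|\leq1$; and the lateral cylinders $|x|=\ep$, $|x|=1$. On the cone the outward normal is $(\theta,-1)/\sqrt2$ and $dS=\sqrt2\,dx$, so, after substituting (\ref{eq:fangular}) and $\mu_t^2+\mu_r^2+2\mu_t\mu_r=(\mu_t+\mu_r)^2$, the flux through the cone equals exactly $-J$. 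The crucial point is that the lateral fluxes, which are multiples of $\mu_t\mu_r$, vanish: since $\beta$ and $\pa_r\beta$ are zero on $|x|=\ep$ and $|x|=1$, so is $\mu_r$ there. Hence $J=E(\tau)-\int_{Q_{+,\tau}}2\mu_t(\Box\mu+\sigma^2\mu)$.

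Next I would bound the volume term as in Lemma~\ref{lemma:neartez}: writing $\Box\mu=\mathcal L\mu-q\mu$ and using $2\sigma^2|\mu\mu_t|\leq\sigma(\mu_t^2+\sigma^2\mu^2)$ and $2|\mu_t\mathcal L\mu|\leq\mu_t^2+|\mathcal L\mu|^2$ gives
\[
J\cleq E(\tau)+\sigma\int_{Q_+}(|\nabla_{x,t}\mu|^2+\sigma^2\mu^2)+\int_{Q_+}|\mathcal L\mu|^2 .
\]
Averaging over $\tau\in[1,3]$ and using $\int_1^3 E(\tau)\,d\tau\cleq\int_{Q_+}(|\nabla_{x,t}\mu|^2+\sigma^2\mu^2)$ removes $E(\tau)$. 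Converting back through $|\mu|\cleq e^{\sigma\psi}|\beta|$, $|\nabla_{x,t}\mu|\cleq e^{\sigma\psi}(|\nabla_{x,t}\beta|+\sigma|\beta|)$ and $|\mathcal L\mu|\cleq e^{\sigma\psi}(|\mathcal L\beta|+\sigma|\nabla_{x,t}\beta|+\sigma^2|\beta|)$ (constants absorbing $\|q\|_\infty$ and $\|\psi\|_{C^2(\ol{Q}_+)}$), together with $2\sigma^2|\nabla_{x,t}\beta|\,|\beta|\leq\sigma|\nabla_{x,t}\beta|^2+\sigma^3\beta^2$, yields $J\cleq\sigma\|\beta\|_{1,\sigma,Q_+}^2+\|\mathcal L\beta\|_{0,\sigma,Q_+}^2$.

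Finally I would relate the weighted cone energy on the left-hand side of the lemma to $J$. On $t=|x|$ one has $\beta=e^{-\sigma\psi}\mu$, so $|\beta_t+\beta_r|\cleq e^{-\sigma\psi}(|\mu_t+\mu_r|+\sigma|\mu|)$ and $\Omega_{ij}\beta=e^{-\sigma\psi}(\Omega_{ij}\mu-\sigma(\Omega_{ij}\psi)\mu)$; since $\ep\leq r\leq1$ on the annulus and $\tfrac{1}{2r^2}\sum_{ij}(\Omega_{ij}\psi)^2\leq|\nabla\psi|^2\cleq\|\psi\|_{C^1}^2$ by (\ref{eq:fangular}), these give, on the cone,
\[
e^{2\sigma\psi}\Big((\beta_t+\beta_r)^2+\tfrac{1}{2r^2}\sum_{ij}(\Omega_{ij}\beta)^2+\sigma^2\beta^2\Big)\cleq(\mu_t+\mu_r)^2+\tfrac{1}{2r^2}\sum_{ij}(\Omega_{ij}\mu)^2+\sigma^2\mu^2 .
\]
Integrating over $\ep\leq|x|\leq1$ bounds the left-hand side of the lemma by $J$, finishing the proof (the left-hand weight being $e^{2\sigma\psi}$, consistent with the $\|\cdot\|_{1,\sigma}$ norms and with the use of this lemma in (\ref{eq:stemp25})). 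I expect the only real obstacle to be geometric bookkeeping: assembling the cone's normal, the measure $\sqrt2\,dx$, and the angular splitting (\ref{eq:fangular}) into precisely the stated tangential energy, and confirming that the hypotheses $\beta=\pa_r\beta=0$ on $|x|=\ep,1$ are exactly what is needed to discard the lateral flux. With these in hand the estimate is the conical mirror of Lemma~\ref{lemma:neartez}.
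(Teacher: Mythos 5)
Your geometric setup is exactly the paper's: you use the same multiplier identity (\ref{eq:pmuiden}) for $\mu = e^{\sigma\psi}\beta$, the same slice energy $E(\tau)$ and cone energy $J$, the same observation that the hypotheses $\beta = \pa_r\beta = 0$ on $|x|=\ep$ and $|x|=1$ kill the lateral fluxes, the same flux computation giving $-J$ on the cone via (\ref{eq:fangular}), and the same final conversion between $\mu$ and $\beta$ on the cone. All of that is correct and matches the paper's proof.

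The gap is in how you treat the volume term. You apply the unweighted Young inequality $2|\mu_t\mathcal{L}\mu| \le \mu_t^2 + |\mathcal{L}\mu|^2$ and then convert $\int_{Q_+}|\mathcal{L}\mu|^2$ back to $\beta$. But $\mathcal{L}\mu$ genuinely contains the term $2\sigma e^{\sigma\psi}(\psi_t\beta_t - \nabla\psi\cdot\nabla\beta)$, so $|\mathcal{L}\mu|^2$ contains the \emph{diagonal} terms $\sigma^2 e^{2\sigma\psi}|\nabla_{x,t}\beta|^2$ and $\sigma^4 e^{2\sigma\psi}\beta^2$; the cross-term inequality $2\sigma^2|\nabla_{x,t}\beta|\,|\beta| \le \sigma|\nabla_{x,t}\beta|^2 + \sigma^3\beta^2$ that you invoke cannot remove them. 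Hence your chain of estimates only yields $J \cleq \sigma^2\|\beta\|_{1,\sigma,Q_+}^2 + \|\mathcal{L}\beta\|_{0,\sigma,Q_+}^2$, a full factor of $\sigma$ worse than the lemma claims. This loss is not cosmetic: in (\ref{eq:stemp25}) the term $\sigma\|\beta\|_{1,\sigma,Q_+}^2$ is absorbed exactly by the Carleman estimate (\ref{eq:sLbeta}), whereas with $\sigma^2$ one would only get $\int_B e^{2\sigma\psi(x,|x|)}|p|^2\,dx \cleq \sigma\int_Q e^{2\sigma\psi}|\mathcal{L}\beta|^2$; since $g(\sigma)$ in Lemma \ref{lemma:sintegral} decays only like $\sigma^{-1/2}$, the product $\sigma g(\sigma)$ blows up and the absorption argument concluding the proof of Proposition \ref{prop:sprop} fails. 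The repair is one line: do not square $\mathcal{L}\mu$ by itself. Either keep the product and bound it as the paper does (in both Lemma \ref{lemma:neartez} and here), namely $|\mu_t|\,|\mathcal{L}\mu| \cleq e^{2\sigma\psi}\bigl(|\nabla_{x,t}\beta| + \sigma|\beta|\bigr)\bigl(|\mathcal{L}\beta| + \sigma|\nabla_{x,t}\beta| + \sigma^2|\beta|\bigr) \cleq e^{2\sigma\psi}\bigl(|\mathcal{L}\beta|^2 + \sigma|\nabla_{x,t}\beta|^2 + \sigma^3\beta^2\bigr)$, or use the $\sigma$-weighted Young inequality $2|\mu_t\mathcal{L}\mu| \le \sigma\mu_t^2 + \sigma^{-1}|\mathcal{L}\mu|^2$, after which $\sigma^{-1}\int_{Q_+}|\mathcal{L}\mu|^2 \cleq \sigma^{-1}\|\mathcal{L}\beta\|_{0,\sigma,Q_+}^2 + \sigma\|\beta\|_{1,\sigma,Q_+}^2$ and the stated estimate follows.
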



\begin{proof}

For $\sigma>1$, let $\mu = e^{\sigma \psi} \beta$; then on $Q_+$
\begin{align}
|\mu|  \cleq e^{ \sigma \psi} |\beta|, \qquad
|\nabla_{x,t} \mu|  \cleq e^{\sigma \psi} ( |\nabla_{x,t} \beta| + \sigma |\beta|).
\label{eq:smu2}
\end{align}
Further, since $\beta$ is smooth on the region $t \geq |x|$, we have
\begin{align*}
\L \mu & :=(\Box +q) \mu  = e^{\sigma \psi} \left ( \L \beta
+ 2 \sigma (\psi_t \beta_{t} -  \nabla \psi \cdot \nabla \beta)
+ \sigma \beta \Box \psi + \sigma^2 (\psi_t^2 - |\nabla \psi|^2) \beta \right )
\end{align*}
which implies
\beqn
|\L \mu| \cleq e^{\sigma \psi} ( |\L \beta| + \sigma |\nabla_{x,t} \beta| + \sigma^2 |\beta| ).
\label{eq:smu3}
\eeqn

Define
\[
J := \int_{B \setminus B_\ep} 
\left ( ( \mu_t + \mu_r)^2 
+ \frac{1}{2 r^2} \smash{\sum_{i \neq j} }(\Omega_{ij} \mu)^2 + \sigma^2 \mu^2 
\right  )(x,|x|) \, dx
\]
and, for any $\tau \in [1,3]$, define 
\[
E(\tau) := \int_{B \setminus B_\ep}  ( |\nabla_{x,t} \mu|^2 + \sigma^2 \mu^2)(x,\tau) \, dx.
\]
Note that from (\ref{eq:fangular}) we have
\[
|\nabla_{x,t} \mu|^2 +  2 \mu_r \mu_t = (\mu_t + \mu_r)^2 + \frac{1}{2 r^2}  \sum_{i \neq j}  (\Omega_{ij} \mu)^2.
\]

For any $\tau \in [1,3]$, integrating the relation
\[
2\mu_t (\Box \mu + \sigma^2 \mu) = ( \mu_t^2 + |\nabla \mu|^2 + \sigma^2 \mu^2)_t - 2 \nabla \cdot (\mu_t \nabla \mu)
\]
over the region $ Q_{+,\tau}$, and noting that $\mu$  and $\pa_r \mu$ are zero when $|x|=\ep$ or $|x|=1$,
we have
\begin{align*}
J & = E(\tau) - 2\int_{ Q_{+,\tau} } \mu_t (\Box \mu + \sigma^2 \mu )
 \\
 & = E(\tau) - 2\int_{ Q_{+,\tau} }  \mu_t ( \L \mu  + (\sigma^2+ q) \mu )
 \\
 &
 \cleq E(\tau)  +  \int_{Q_+} (| \L  \mu| + \sigma^2 |\mu|) |\mu_t|.
\end{align*}
Integrating this over $\tau \in [1,3]$, using (\ref{eq:smu2}) - (\ref{eq:smu3}) and that $\sigma^2 |\mu_t| \, |\mu|
\cleq \sigma |\mu_t|^2 + \sigma^3 |\mu|^2$ we obtain
\begin{align}
J & \cleq \sigma \int_{Q_+}  |\nabla_{x,t} \mu|^2 + \sigma^2 \mu^2 + 
  \int_{Q_+} | \L \mu| \, |\mu_t|
  \nonumber
  \\
  & \cleq  \sigma \int_{Q_+} e^{2 \sigma \psi} ( |\nabla_{x,t} \beta|^2 + \sigma^2 |\beta|^2)
+ \int_{Q_+} e^{2 \sigma \psi} |\L \beta|^2 .
 \label{eq:sE0}
\end{align}
Now
$\mu = e^{\sigma \psi} \beta$ so $\beta = e^{-\sigma \psi} \mu$ and hence, on $Q_+$, we have
\[
e^{2 \sigma \psi} |\beta_r + \beta_t|^2  \cleq |\mu_r + \mu_t|^2 + \sigma^2 |\mu|^2,
\qquad
e^{2 \sigma \psi} |\Omega_{ij} \beta|^2 \cleq |\Omega_{ij} \mu|^2 + \sigma^2 |\mu|^2,
\qquad
e^{2 \sigma \psi} |\beta|^2 = |\mu|^2.
\]
So this combined with (\ref{eq:sE0}) and the definition of $J$ proves the lemma.
\end{proof}


Next we obtain a uniqueness result for an exterior problem.

\begin{lemma}[Uniqueness for exterior problem]\label{lemma:sexterior}
Suppose $\alpha(x,t)$ is a continuous function on $K$ which is smooth on the subregions $t \geq |x|$ and $t \leq |x|$ and 
satisfies
\begin{align*}
\Box \alpha =0, & \qquad \text{on} ~ (x,t) \in K, ~ |x| \geq 1,
\\
\alpha(x,t) = 0, & \qquad \text{on} ~1 \leq t \leq |x|,
\\
 \alpha|_\Sigma=0;
\end{align*}
then $\alpha=0$ on the region $\{(x,t) : |x| \geq 1, ~ -1<t \leq 3 \}.$
\end{lemma}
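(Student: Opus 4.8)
The plan is to use that $q$ is supported in $\Bb$, so that $q=0$ and hence $\Box\alpha=0$ on $\{|x|\ge 1\}$, and to prove $\alpha\equiv 0$ there by two independent energy arguments carried out on the two sides of the characteristic cone $t=|x|$; in this way I never need $\Box\alpha=0$ to hold \emph{across} the cone, where $\alpha$ is only piecewise smooth. First I collect the data. Since $\alpha=0$ on $\{1\le t\le |x|\}$ and $\alpha$ is continuous, $\alpha=0$ on the cone $\{t=|x|,\ |x|\ge 1\}$. For a fixed $x$ with $|x|>1$ the set $\{1\le t\le|x|\}$ contains a one-sided $t$-neighborhood of $t=1$ on which $\alpha\equiv 0$, so the smooth extension of $\alpha$ from $\{t\le|x|\}$ satisfies $\alpha(x,1)=0$ and $\alpha_t(x,1)=0$; that is, $\alpha$ has vanishing Cauchy data on $\{t=1,\ |x|\ge 1\}$. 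Finally $\alpha|_\Sigma=0$ gives $\alpha=0$, and hence $\alpha_t=0$ (a tangential derivative), on the cylinder $\{|x|=1\}$ for all $t\in(-1,3]$.

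For the region $R_+:=\{|x|\ge 1,\ |x|\le t\le 3\}$, on which $\alpha$ is smooth, I integrate
\[
2\alpha_t\,\Box\alpha=\pa_t\big(\alpha_t^2+|\nabla\alpha|^2\big)-2\,\nabla\cdot(\alpha_t\nabla\alpha)
\]
over $R_+\cap\{t\le\tau\}$ for $\tau\in(1,3]$ and apply the divergence theorem. The boundary consists of the top slice $t=\tau$, the inner cylinder $|x|=1$, and the cone $t=|x|$. The cylinder flux vanishes since $\alpha_t=0$ there. The cone flux vanishes because the cone is characteristic and $\alpha=0$ on it: differentiating $\alpha(x,|x|)=0$ gives $\alpha_t=-\pa_r\alpha$ and $\Omega_{ij}\alpha=0$ on the cone, whence the energy density and the normal flux cancel exactly. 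Thus $\int_{\{t=\tau\}}(\alpha_t^2+|\nabla\alpha|^2)\,dx=0$ for every such $\tau$, and with $\alpha=0$ on the cone this forces $\alpha\equiv 0$ on $R_+$.

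For the region $R_-:=\{|x|\ge 1,\ -1<t\le 1\}\subset\{t\le|x|\}$, where $\alpha$ is again smooth and solves $\Box\alpha=0$ classically, I invoke finite speed of propagation. Fix $(x_0,t_0)$ with $|x_0|\ge 1$ and $-1<t_0<1$, and consider the solid cone with apex $(x_0,t_0)$ opening up to the base $\{t=1,\ |x-x_0|\le 1-t_0\}$, intersected with $\{|x|\ge 1\}$. The standard monotone-energy (domain of dependence) argument applies: the base contributes nothing because the Cauchy data vanishes on $\{t=1,\ |x|\ge 1\}$, the portion of the cylinder $|x|=1$ that the cone may meet contributes nothing because $\alpha=0$ there, and the moving spherical boundary gives the usual favorable sign. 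Hence $\alpha(x_0,t_0)=0$, and since $(x_0,t_0)$ is arbitrary, $\alpha\equiv 0$ on $R_-$. Combining the two regions with the known vanishing in $\{1\le t\le|x|\}$ proves $\alpha=0$ on $\{|x|\ge 1,\ -1<t\le 3\}$.

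The only real subtlety is the characteristic nature of the cone $t=|x|$: a single energy estimate through it is unavailable because $\Box\alpha$ may carry a single-layer distribution supported on the cone. The argument avoids this by keeping the two estimates strictly on opposite sides of the cone and by exploiting the exact cancellation of the cone flux, which hinges on $\alpha=0$ together with the characteristic relations $\alpha_t=-\pa_r\alpha$ and $\Omega_{ij}\alpha=0$ holding along it.
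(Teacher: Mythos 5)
Your proof is correct and follows essentially the same route as the paper: split the exterior region along the characteristic cone $t=|x|$, run the energy identity $2\alpha_t\Box\alpha = \pa_t(\alpha_t^2+|\nabla\alpha|^2)-2\nabla\cdot(\alpha_t\nabla\alpha)$ above the cone (where the vanishing of $\alpha$ on the characteristic cone and on the lateral boundary kills the flux terms), and then use backward uniqueness from the vanishing Cauchy data at $t=1$ together with the zero Dirichlet data on $|x|=1$ for the slab $-1<t\le 1$. Your write-up merely makes explicit two points the paper leaves implicit: the exact cancellation $(\alpha_t+\alpha_r)^2+\tfrac{1}{2r^2}\sum_{i\neq j}(\Omega_{ij}\alpha)^2=0$ on the cone, and the cone-based domain-of-dependence localization that justifies the "standard estimates" on the unbounded exterior slab.
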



\begin{proof}


Firstly $\alpha=0$ on the conical type region $ C: = \{ (x,t) : |x| \geq 1, ~ |x| \leq t \leq 3 \}$. This follows from 
integrating the identity
\[
2 u_t \Box u = (u_t^2 + |\nabla u|^2)_t - 2 \nabla \cdot (u_t \nabla u)
\]
over the region $ C \cap \{ t \leq \tau \}$, for all $\tau \in [1,3]$, and observing that $\alpha$ is smooth $C$ and
$\alpha=0$ on $\Sigma_+$ (the lateral boundary of $C$) and the conical boundary of $C$. 
Note the conical boundary of $C$ is a characteristic surface so $\alpha=0$ on this surface is adequate for our purpose.
Hence $\alpha=0$ on the region $\{(x,t): |x| \geq 1, ~ 1 \leq t \leq 3 \}$. 

Next, on the region
$\{ (x,t) : |x| \geq 1, ~ -1<t \leq 1 \}$, $\alpha$ is smooth and solves the backward IBVP $\Box \alpha=0$
with $\alpha=0$ on $\Sigma_-$ (the lateral boundary) and $\alpha=0, \alpha_t=0$ on $t=1$. So again by standard estimates
$\alpha=0$ on this region.

\end{proof}

%
%

\subsection{Carleman weight for the spherical wave problem}\label{subsec:scarl}

Please refer to beginning of subsection \ref{subsec:pcarl} for the definition of pseudoconvexity and strong pseudoconvexity 
for differential operators and associated results that we use here.

Our goal is to construct a function, dependent only on $r,t$, which is decreasing in 
$|t-r|$ for a fixed $r$ and strongly pseudoconvex w.r.t $\Box$. As discussed at the beginning
of subsection \ref{subsec:pcarl}, one starts by constructing a function whose level surfaces are pseudoconvex w.r.t
$\Box$. We start by
characterizing all functions, dependent only on $r,t$, whose level surfaces are pseudoconvex w.r.t $\Box$;
this may be useful elsewhere.

\begin{lemma}\label{lemma:scarl}
If $\phi(r,t)$ is a smooth function on $(0,\infty) \times \R$ such that $(\phi_r, \phi_t) \neq (0,0)$
at every point on this region, then the level curves of $\phibv(x,t) = \phi(|x|,t)$ are strongly pseudoconvex w.r.t 
$\Box$ on the region $ (\R^n \setminus \{0\}) \times \R$, $n>1$, iff the following holds on  $(0,\infty) \times \R$:
\beqn
r (\phi_{tt} \phi_r^2 + \phi_{rr} \phi_t^2 - 2 \phi_{rt} \phi_r \phi_t) + \phi_r(\phi_r^2 - \phi_t^2)
 >0,
\qquad \text{whenever} ~ |\phi_t| \leq |\phi_r| 
\label{eq:rtconvex}
\eeqn
\end{lemma}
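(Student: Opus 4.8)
The plan is to follow the pattern of the proof of Lemma~\ref{lemma:pweight}: reduce strong pseudoconvexity to the single algebraic inequality coming from H\"ormander's condition (8.4.5) in \cite{Ho76} (equivalently (1.3) in \cite{Ta96}), and then carry out the radial computation. As recalled at the beginning of subsection~\ref{subsec:pcarl}, for the second order operator $\Box$ with real principal part the notions of pseudoconvex and strongly pseudoconvex level surfaces coincide, so it is enough to verify the pseudoconvexity condition. Since $\Box$ has constant coefficients, with principal symbol $p(\xi,\tau)=|\xi|^2-\tau^2$, that condition reads: for every real $(\xi,\tau)\neq 0$ with
\[
p(\xi,\tau)=0, \qquad \nabla_{(\xi,\tau)}p\cdot \nabla_{(x,t)}\phibv=0,
\]
one has $\mathrm{Hess}_{(x,t)}\,\phibv\,(\nabla_{(\xi,\tau)}p,\nabla_{(\xi,\tau)}p)>0$. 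That $\nabla_{(x,t)}\phibv\neq 0$ on $(\R^n\setminus\{0\})\times\R$ is immediate from $\nabla_{(x,t)}\phibv=(\phi_r\,\theta,\phi_t)$ and $(\phi_r,\phi_t)\neq(0,0)$, where $\theta=x/|x|$.

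First I would compute the first and second derivatives of $\phibv(x,t)=\phi(|x|,t)$ in terms of $\phi$, writing $\theta=x/|x|$ and $\xi_r=\theta\cdot\xi$ for the radial component of $\xi$, and using $|\xi|^2-\xi_r^2$ for the squared angular part as in \eqref{eq:fangular}. This gives $\nabla_{(x,t)}\phibv=(\phi_r\theta,\phi_t)$, so the orthogonality constraint becomes $\phi_r\xi_r=\tau\phi_t$, while $p=0$ is $|\xi|^2=\tau^2$. A direct computation of the Hessian (the radial part contributes $\phi_{rr}\xi_r^2+\frac{\phi_r}{r}(|\xi|^2-\xi_r^2)$, the cross part $-2\tau\phi_{rt}\xi_r$, and the time part $\phi_{tt}\tau^2$, after dropping the harmless positive factor $4$ coming from $\nabla_{(\xi,\tau)}p=(2\xi,-2\tau)$) shows that, once $|\xi|^2=\tau^2$ is used, the quadratic form equals
\[
E:=\phi_{rr}\xi_r^2+\frac{\phi_r}{r}(\tau^2-\xi_r^2)-2\tau\phi_{rt}\xi_r+\phi_{tt}\tau^2.
\]

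Next I would translate the constraints into the side condition $|\phi_t|\le|\phi_r|$. Because $\xi_r$ is one component of $\xi$ and $n>1$, on the characteristic sphere $|\xi|=|\tau|$ the value $\xi_r$ ranges exactly over $[-|\tau|,|\tau|]$; also $\tau=0$ forces $\xi=0$, so only $\tau\neq 0$ occurs. If $\phi_r=0$ then $\phi_t\neq 0$, the orthogonality relation forces $\tau=0$ and hence $(\xi,\tau)=0$, so there are no admissible covectors and pseudoconvexity holds vacuously; correspondingly $|\phi_t|\le|\phi_r|=0$ fails, so \eqref{eq:rtconvex} is also vacuous there. If $\phi_r\neq 0$, the orthogonality relation gives $\xi_r=(\phi_t/\phi_r)\tau$, and the admissibility $|\xi_r|\le|\tau|$ (with $\tau\neq 0$) holds precisely when $|\phi_t|\le|\phi_r|$, which is exactly the region where \eqref{eq:rtconvex} is imposed.

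Finally, under $\phi_r\neq 0$ I would substitute $\xi_r=(\phi_t/\phi_r)\tau$ into $E$, factor out $\tau^2>0$, and multiply by $r\phi_r^2>0$; this turns $E>0$ into
\[
r(\phi_{tt}\phi_r^2+\phi_{rr}\phi_t^2-2\phi_{rt}\phi_r\phi_t)+\phi_r(\phi_r^2-\phi_t^2)>0,
\]
which is exactly \eqref{eq:rtconvex}. Collecting the two cases gives the claimed equivalence. I expect the main bookkeeping obstacle to be the radial/angular expansion of $\mathrm{Hess}\,\phibv$ and, more importantly, the correct identification of the range constraint $|\xi_r|\le|\tau|$ as the source of the side hypothesis $|\phi_t|\le|\phi_r|$; the rest is algebra. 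I would also double-check that the equivalence of pseudoconvexity and strong pseudoconvexity is applied at every point of $(\R^n\setminus\{0\})\times\R$, so that the conclusion is genuinely about strong pseudoconvexity as stated.
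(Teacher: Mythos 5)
Your proposal is correct and follows essentially the same route as the paper's proof: reduce strong pseudoconvexity to H\"ormander's algebraic condition (8.4.5), compute the Hessian of $\phibv$ in radial coordinates, use the orthogonality constraint to express the radial component of $\xi$ as $(\phi_t/\phi_r)\tau$, and identify the admissibility range $|\xi_r|\le|\tau|$ (using $n>1$) with the side condition $|\phi_t|\le|\phi_r|$. Your explicit treatment of the vacuous case $\phi_r=0$ is a slightly more careful bookkeeping of a point the paper handles implicitly, but the argument is the same.
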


\begin{proof}

As discussed at the beginning of subsection \ref{subsec:pcarl}, since $\Box$ is a second order operator with real principal symbol,
the level surfaces of $\phibv$ will be strongly pseudoconvex w.r.t $\Box$ on a region iff the level surfaces of $\phibv$ are
pseudoconvex w.r.t $\Box$ on that region.

Define $\Omega := (\R^n \setminus \{0\}) \times \R$,  $ \phibv(x,t) := \phi(|x|,t)$ on $\Omega$ and note that 
$\nabla_{x,t} \phibv \neq 0$ at every point of $\Omega$. 
Below double indices imply summation. Temporarily we denote $t$ by $x_0$, $\tau$ by $\xi_0$
and take $x=(x_0,x_1, \cdots, x_n)$, $\xi=(\xi_0, \cdots, \xi_n)$. 
The condition (1.3) in \cite{Ta96}, in expanded form, is condition (8.4.5) in \cite{Ho76}, so
the level surfaces of $\phibv$ are pseudoconvex w.r.t $P(x,D)$ (with principal symbol $p(x,\xi)$) on $\Omega$ iff
\[
\frac{\pa^2 \phibv}{\pa x_j \pa x_k } (x) \, \frac{ \pa p}{\pa \xi_j}(x, \xi) \, \frac{ \pa p}{\pa \xi_k}(x, \xi)
+ \left ( \frac{ \pa^2 p}{\pa \xi_j \pa x_k}(x, \xi) \, \frac{ \pa p}{\pa \xi_k}(x, \xi) - \frac{ \pa p}{\pa x_k} (x, \xi)
\frac{ \pa^2 p}{\pa \xi_j \pa \xi_k}(x, \xi) \right ) \frac{\pa \phibv}{\pa x_j}(x) \, > \, 0
\]
whenever $(x,\xi) \in \Omega \times \R^{n+1}$, $\xi \neq 0$ and
\[
p(x,\xi)=0, ~~ (\nabla_\xi p \cdot \nabla_x \phibv)(x,\xi) =0.
\]
If we introduce $\ep_0=-1$ and $\ep_j=1$ for $j=1, \cdots, n$ then the principal symbol of $\Box$ is
\[
p(x,\xi) =  \ep_j \xi_j^2
\]
and the psuedo-convexity condition may be rewritten as
\[
 \ep_j \ep_k \phibv_{x_j x_k} \xi_j \xi_k >0
\]
whenever $x \in \Omega$, $ \xi \neq 0$ and 
\[
 \ep_j \xi_j^2=0, ~~ \ep_j \xi_j \phibv_{x_j} =0.
\]
Written in the original variables, the pseudoconvexity condition is
\[
\tau^2 \phibv_{tt} - 2\tau \xi_j \phibv_{x_j t} + \xi_j \xi_k \phibv_{x_j x_k} >0, \qquad \forall (x,t) \in \Omega, 
~ (\xi, \tau) \neq (0,0), ~
\tau^2 = |\xi|^2, ~~ \tau \phibv_t = \xi \cdot \nabla_x \phibv.
\]
Since the condition is homogeneous in $(\xi,\tau)$, we may take $\tau = \pm 1$, $|\xi|=1$
and the condition is equivalent to 
\[
\phibv_{tt} - 2 \xi_j \phibv_{x_j t} + \xi_j \xi_k \phibv_{x_j x_k} >0
\]
whenever $(x,t) \in \Omega$, $|\xi|=1$ and $\phibv_t = \xi \cdot \nabla_x \phibv$.

Now
\begin{gather*}
\phibv_t = \phi_t,  ~~ \phibv_{x_j} = \phi_r \frac{x_j}{r}
\\
\phibv_{tt} = \phi_{tt}, ~~ \phibv_{x_j t} = \phi_{rt} \frac{x_j}{r}, 
~~ \phibv_{x_j x_k} = \phi_{rr} \frac{x_j x_k}{r^2} + 
\frac{\phi_r}{r} \delta_{jk} - \phi_r \frac{x_j x_k}{r^3}.
\end{gather*}
With $(\phi_r, \phi_t) \neq (0,0)$ at every point, the pseudoconvexity 
condition is
\begin{align*}
 \phi_{tt} - 2 \phi_{rt} \frac{(x \cdot \xi)}{r} + \phi_{rr} \frac{ (x \cdot \xi)^2}{r^2} + \phi_r \frac{ |\xi|^2}{r} - 
 \phi_r \frac{ (x \cdot \xi)^2}{r^3}  >0
\end{align*}
at points where $|\xi|^2=1$ and $ \phi_t = \phi_r (x \cdot \xi)/r$. Since $(\phi_t, \phi_r) \neq (0,0)$ at every point, the 
condition $ \phi_t = \phi_r (x \cdot \xi)/r$ holds only at points where $\phi_r \neq 0$, so at such points we can write
\[
\frac{x \cdot \xi}{r} = \frac{ \phi_t}{\phi_r}
\]
and hence the pseudoconvexity inequality is
\begin{align*}
0&< \phi_{tt} - 2 \phi_{rt} \frac{\phi_t}{\phi_r} + \phi_{rr} \frac{ \phi_t^2}{\phi_r^2} + \frac{\phi_r}{r} - \frac{ \phi_r \phi_t^2}{r \phi_r^2}
\\
& = \frac{ r (\phi_{tt} \phi_r^2 + \phi_{rr} \phi_t^2 - 2 \phi_{rt} \phi_r \phi_t) + \phi_r(\phi_r^2 - \phi_t^2)}{r \phi_r^2}.
\end{align*}
Also, since $|\xi|=1$, we have 
\[
\frac{|\phi_t|}{|\phi_r|} = \frac{ |x \cdot \xi|}{r} \leq \frac{|x| \, |\xi|}{r} \leq 1.
\]
Further, if $|\phi_t| \leq |\phi_r|$ at some point then we can find an 
$\xi \in \R^n$ with $|\xi|=1$ 
so that $\phi_t = \phi_r (x \cdot \xi)/r$ at that point. 
Hence we have proved the lemma.
%
\end{proof}


Next we construct a function of $r,t$ which is strongly 
pseudoconvex w.r.t $\Box$ and such that the function is a decreasing function of $|t-r|$ for a fixed $r$.

\begin{lemma}\label{lemma:sweight}
For $a>4$, if
\[
\phi(x,t) := a |x|^2 - (t-|x|)^2
\]
then
\[
\psi(x,t) := e^{\lambda \phi(x,t)},
\]
for large enough $\lambda$, is strongly pseudoconvex w.r.t $\Box$ on the region $(\R^3 \setminus \{0\}) \times \R$. 
\end{lemma}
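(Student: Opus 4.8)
The plan is to reduce the statement to the criterion of Lemma~\ref{lemma:scarl}. Since $\phi$ depends only on $r=|x|$ and $t$, writing $\phi(r,t)=a r^2-(t-r)^2$, the level surfaces of $\phibv(x,t)=\phi(|x|,t)$ are strongly pseudoconvex w.r.t.\ $\Box$ on $(\R^3\setminus\{0\})\times\R$ precisely when inequality \eqref{eq:rtconvex} holds on the region $|\phi_t|\le|\phi_r|$ in $(0,\infty)\times\R$; and once this is established, the general principle recalled at the start of subsection~\ref{subsec:pcarl} (Theorem~8.6.3 in \cite{Ho76}) yields that $\psi=e^{\lambda\phi}$ is strongly pseudoconvex w.r.t.\ $\Box$ for all sufficiently large $\lambda$. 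So the real task is purely to verify \eqref{eq:rtconvex}.

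First I would record the derivatives. Setting $s:=t-r$, one has $\phi_r=2ar+2s$, $\phi_t=-2s$, $\phi_{rr}=2(a-1)$, $\phi_{rt}=2$, $\phi_{tt}=-2$. In particular $(\phi_r,\phi_t)\neq(0,0)$ everywhere on $r>0$: if $\phi_t=0$ then $s=0$ and $\phi_r=2ar\neq0$, so the nondegeneracy hypothesis of Lemma~\ref{lemma:scarl} is met. Next I would substitute these into the left-hand side of \eqref{eq:rtconvex} and simplify. A direct computation shows that the bracketed second-order term collapses, $\phi_{tt}\phi_r^2+\phi_{rr}\phi_t^2-2\phi_{rt}\phi_r\phi_t=8a(s^2-ar^2)$, while $\phi_r(\phi_r^2-\phi_t^2)=8ar(ar+s)(ar+2s)$. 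Hence the whole left-hand side equals
\[
8ar\bigl[\,3s^2+3ars+a(a-1)r^2\,\bigr].
\]
Since $a,r>0$, its sign is governed by the quadratic $g(s):=3s^2+3ars+a(a-1)r^2$, whereas the constraint $|\phi_t|\le|\phi_r|$ squares to $s^2\le(ar+s)^2$, i.e.\ $ar(ar+2s)\ge0$, i.e.\ $s\ge-ar/2$.

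The key observation, and the only place the threshold $a>4$ enters, is that the vertex of $g$ sits at $s=-ar/2$, exactly on the boundary of the admissible region $\{s\ge-ar/2\}$. Since $g$ opens upward it is increasing for $s\ge-ar/2$, so its minimum there is $g(-ar/2)=ar^2(a-4)/4$, which is strictly positive precisely when $a>4$. Thus \eqref{eq:rtconvex} holds throughout $|\phi_t|\le|\phi_r|$, the level surfaces of $\phibv$ are strongly pseudoconvex, and the lemma follows by exponentiation. I do not anticipate any genuine obstacle beyond carefully executing the algebraic simplification; the one subtlety to watch is the exact coincidence of the vertex of $g$ with the constraint boundary, which is what makes $a>4$ both sufficient and, as the boundary value shows, sharp.
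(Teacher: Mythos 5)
Your proposal is correct and follows essentially the same route as the paper: both reduce to the criterion of Lemma~\ref{lemma:scarl} and verify inequality \eqref{eq:rtconvex} by analyzing the resulting quadratic form on the constraint region $|\phi_t|\le|\phi_r|$, then invoke the exponentiation principle (Theorem 8.6.3 in \cite{Ho76}) to pass to $\psi=e^{\lambda\phi}$. The only difference is cosmetic: you work in the variable $s=t-r$ and locate the vertex of the quadratic exactly on the constraint boundary $s=-ar/2$ (which also exhibits sharpness of $a>4$), whereas the paper substitutes $b=a-1$ and argues positivity via positive definiteness of the quadratic form, reaching the same threshold $b>3$, i.e.\ $a>4$.
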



\begin{proof}

From the discussion at the beginning of subsection \ref{subsec:pcarl} it is enough to prove that the level surfaces of $\phi$
are strongly pseudoconvex w.r.t $\Box$ on the region $(\R^3 \setminus \{0\}) \times \R$, so we use
Lemma \ref{lemma:scarl} to prove this.

For convenience we take
\[
\phi(r,t) := \frac{1}{2} (a r^2 - (t-r)^2) = \frac{1}{2} ( (a-1)r^2 + 2rt - t^2)  = \frac{1}{2} (b r^2 + 2rt - t^2)
\]
where $b=a-1$. Hence
\begin{align*}
\phi_r = br + t, \qquad \phi_t = r-t
\\
\phi_{rr} = b, \qquad \phi_{rt} = 1, \qquad \phi_{tt}=-1.
\end{align*}
Now $\phi_r = 0$ and $\phi_t=0$ exactly at the points where $r=t$ and $br+t=0$, that is iff  $b=-1$, because
we are working in the region where $r \neq 0$. So we have to be sure that $b \neq -1$.

Next, the condition $| \phi_t| \leq |\phi_r|$ is equivalent to 
\[
|r-t|^2 \leq |br+t|^2
\]
which simplifies to
\[
(1-b^2)r \leq 2(b+1)t.
\]
If we choose to have $b+1>0$ (that is $a>0$) then the condition $|\phi_t| \leq |\phi_r|$ is equivalent to
\beqn
t \geq \frac{1-b}{2} r.
\label{eq:snec}
\eeqn
Next, from Lemma \ref{lemma:scarl}, for $\phi$ to have strongly pseuodoconvex level surfaces w.r.t $\Box$, we want
\begin{align*}
0  & <  (r \phi_{tt} + \phi_r) \phi_r^2 + (r \phi_{rr} - \phi_r)\phi_t^2 - 2 r \phi_{rt} \phi_r \phi_t
\\
& =( -r + br +t)( br+t)^2 + (br - br -t) (r-t)^2 - 2r (r-t)(br+t) 
\\
& = ( (b-1)r +t)( br +t)^2 -t (r-t)^2 - 2r (r-t)(br+t)
\\
& = ( (b-1)r +t)( br +t)^2 - (r-t) ( tr - t^2 + 2br^2 + 2rt)
\\
& = ( (b-1)r +t)( br +t)^2 - (r-t) ( 2br^2 + 3rt -t^2)
\\
& = 0 t^3 + rt^2 ( (b-1) + 2b + 1+3) + r^2t( 2b (b-1) + b^2 + 2b -3)  + r^3( b^2(b-1) -2b)
\\
& = 3rt^2(b+1) + 3tr^2(b^2-1) + b (b^2-b-2) r^3
\\
& = (b+1)r ( 3t^2 + 3(b-1)tr + b(b-2)r^2)
\end{align*}
whenever (\ref{eq:snec}) holds.
So assuming we choose $b+1>0$, we want the quadratic form
\[
f(r,t) := r^2 b(b-2) + 3(b-1)rt + 3t^2
\]
to be positive in the region $t > (1-b) r/2$. But $(r,t)$ and $(-r,-t)$ give the same value of the quadratic form but opposite
inequalities for $t > (1-b) r/2$, so we want this quadratic form to be positive definite. 
If we choose $b>2$ then this 
quadratic form will be positive definite if 
\[
0 <12b(b-2) - 9(b-1)^2 = 3b^2 -6b -9 = 3((b-1)^2 -4),
\]
that is if $b>3$. So we conclude that if $b>3$ then the level curves of $\phi(r,t)$ are strongly pseudoconvex in the
region where $r \neq 0$.
\end{proof}


Next, we compute the limit of an integral associated with the Carleman weight we use in the proof of
Proposition \ref{prop:sprop}.

\begin{lemma}\label{lemma:sintegral} 
Let $D = \{ x \in R^3 : \ep \leq |x| \leq 1 \}$ and $\psi(x,t)$ be the function in Lemma \ref{lemma:sweight}. Define
\[
g(\sigma) :=  \sup_{ x \in D} \int_{-1}^3 e^{2\sigma ( \psi (x,t) - \psi(x,|x|)) }\, dt ;
\]
then $\lim_{\sigma \to \infty} g(\sigma) =0.$
\end{lemma}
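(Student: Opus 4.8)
The plan is to follow the proof of Lemma \ref{lemma:integral} almost verbatim, since the weight $\phi(x,t) = a|x|^2 - (t-|x|)^2$ of Lemma \ref{lemma:sweight} behaves, as a function of $t$ for fixed $x$, exactly like the plane-wave weight did as a function of $t$ for fixed $(y,z)$: it is maximized precisely on the characteristic surface $t = |x|$. The first step is to record the elementary identity
\[
\phi(x,|x|) - \phi(x,t) = (t-|x|)^2 \geq 0, \qquad \phi(x,|x|) = a|x|^2 \geq 0,
\]
valid for every $x \in D$ and $t \in \R$; in particular $\psi(x,t) \leq \psi(x,|x|)$, so the integrand defining $g(\sigma)$ is bounded by $1$ uniformly in $x$ and $t$, and $g(\sigma)$ is well defined.

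Next I would convert the difference of exponentials into a uniformly coercive lower bound. Writing
\[
\psi(x,|x|) - \psi(x,t) = e^{\lambda \phi(x,|x|)}\bigl(1 - e^{-\lambda(t-|x|)^2}\bigr) \geq 1 - e^{-\lambda(t-|x|)^2},
\]
where the last inequality uses $e^{\lambda\phi(x,|x|)} \geq 1$ (a consequence of $\lambda > 0$ and $\phi(x,|x|) \geq 0$), and then invoking the elementary bound $1 - e^{-s} \geq \min(1/2,\, s/2)$ for $s \geq 0$, I obtain
\[
2\bigl(\psi(x,t) - \psi(x,|x|)\bigr) \leq - \min\bigl(1,\, \lambda(t-|x|)^2\bigr).
\]
Substituting $s = t - |x|$ and using $|x| \in [\ep,1]$ to enlarge the domain of integration to the fixed interval $[-2,3]$ then gives, for every $x \in D$,
\[
\int_{-1}^3 e^{2\sigma(\psi(x,t)-\psi(x,|x|))}\,dt \leq \int_{-2}^{3} e^{-\sigma \min(1,\,\lambda s^2)}\,ds,
\]
a bound independent of $x$, so that $g(\sigma)$ is dominated by the right-hand side.

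Finally, since the integrand $e^{-\sigma \min(1,\,\lambda s^2)}$ is bounded by $1$ and tends to $0$ pointwise for every $s \neq 0$ as $\sigma \to \infty$, the dominated convergence theorem yields $\int_{-2}^3 e^{-\sigma \min(1,\,\lambda s^2)}\,ds \to 0$, whence $0 \leq \lim_{\sigma\to\infty} g(\sigma) \leq 0$. There is no real obstacle here: the argument is a direct transcription of Lemma \ref{lemma:integral}, and the only point needing a moment's care is that the prefactor $e^{\lambda\phi(x,|x|)}$ stays $\geq 1$ uniformly over $x \in D$ — which holds because $\phi(x,|x|) = a|x|^2 \geq 0$ — so that it may be discarded when passing to the $x$-independent majorant.
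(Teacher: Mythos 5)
Your proof is correct and follows essentially the same argument as the paper's own proof: the lower bound $\psi(x,|x|)-\psi(x,t) \geq 1-e^{-\lambda(t-|x|)^2}$, the elementary inequality $1-e^{-s}\geq \min(1/2,s/2)$, the shift to an $x$-independent majorant over $[-2,3]$, and dominated convergence. Nothing to add.
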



\begin{proof}
Below $x \in D$. Since $\lambda >0$ and $\phi(x,|x|) \geq 0$, we have
\begin{align*}
\psi(x,|x|) - \psi(x,t)   & = e^{\lambda \phi(x,|x|)} - e^{\lambda \phi(x,t)}
 =  e^{\lambda \phi(x,|x|)} (1 -  e^{ - \lambda (\phi(x,|x|) -  \phi(x,t) ) } )
\\
& = e^{\lambda \phi(x,|x|)} ( 1- e^{ - \lambda (t-|x|)^2})
 \geq 1- e^{ - \lambda (t-|x|)^2}.
\end{align*}
Now, for $s \geq 0$, 
\[
1 - e^{-s} \geq \min( 1/2, s/2),
\]
hence
\[
2(\psi(x,t) - \psi(x,|x|)) \leq - \min(1, \lambda(t-|x|)^2 )
\]
so
\begin{align*}
\int_{-1}^3 e^{2 \sigma (\psi(x,t)-\psi(x,|x|)} \, dt
& \leq \int_{-1}^3 e^{ - \sigma \min( 1, \lambda (t-|x|)^2) } \, dt
 = \int_{-1-|x|}^{3-|x|} e^{ - \sigma \min( 1, \lambda t^2) } \, dt
\\
& \leq \int_{-2}^{3} e^{ - \sigma \min( 1, \lambda t^2) } \, dt.
\end{align*}
Hence, by the dominated convergence theorem
\[
0 \leq  \lim_{\sigma \to \infty} g(\sigma) \leq 
\lim_{\sigma \to \infty}  \int_{-2}^{3} e^{ - \sigma \min( 1, \lambda t^2) } \, dt =0.
\]
\end{proof}



\section{Acknowledgment}

This work was done when Rakesh was on sabbatical from the University of Delaware, mainly at the University of Helsinki but
also at University of Jyv{\"a}skyl{\"a}. Rakesh would like to thank the University of Helsinki, particularly Matti Lassas, for its generous support and also University of Jyv{\"a}skyl{\"a} for its support. Rakesh was also supported by funds from an NSF grant DMS 1615616. 
M Salo's work was supported by the Academy of Finland (grants 284715 and 309963) and by the European Research Council under Horizon 2020 (ERC CoG 770924).

%

\end{document}